\newtheorem{theorem}{Theorem}[section]
\newtheorem{corollary}[theorem]{Corollary}
\newtheorem{lemma}[theorem]{Lemma}
\newtheorem{proposition}[theorem]{Proposition}
\theoremstyle{definition}
\newtheorem{definition}[theorem]{Definition}
\newtheorem{remark}[theorem]{Remark}
\newtheorem{question}[theorem]{Question}
\theoremstyle{plain}
\theoremstyle{definition}
\theoremstyle{remark}
\newcommand{\spn}{{\rm span}}
\newcommand{\Ad}{{\rm Ad}\,}
\newcommand{\diag}{{\rm diag}}
\newcommand{\id}{{\rm id}}
\newcommand{\cB}{{\mathcal B}}
\newcommand{\cC}{{\mathcal C}}
\newcommand{\cU}{{\mathcal U}}
\newcommand{\cQ}{{\mathcal Q}}
\newcommand{\cP}{{\mathcal P}}
\newcommand{\cR}{{\mathcal R}}
\newcommand{\cS}{{\mathcal S}}
\newcommand{\Cb}{{\mathbb C}}
\newcommand{\Zb}{{\mathbb Z}}
\newcommand{\Nb}{{\mathbb N}}
\newcommand{\tr}{{\rm tr}}
\newcommand{\os}{{\boldsymbol{s}}}
\newcommand{\ot}{{\boldsymbol{t}}}
\newcommand{\mn}{{\rm min}}
\newcommand{\sG}{{\mathscr G}}
\newcommand{\sH}{{\mathscr H}}
\newcommand{\sR}{{\mathscr R}}
\newcommand{\sN}{{\mathscr N}}
\newcommand{\sP}{{\mathscr P}}
\newcommand{\sX}{{\mathscr X}}
\newcommand{\sY}{{\mathscr Y}}
\newcommand{\Sym}{{\rm Sym}}
\newcommand{\SA}{{\rm SA}}
\newcommand{\VN}{{\rm VN}}
\newcommand{\sym}{{}}
\newcommand{\slower}{\underline{s}}
\newcommand{\HA}{{\rm HA}}
\newcommand{\GA}{{\rm GA}}
\newcommand{\so}{\mathfrak{s}}
\newcommand{\ra}{\mathfrak{r}}
\newcommand{\domain}{{\rm dom}}
\newcommand{\eps}{\varepsilon}
\begin{document}

\title{Sofic dimension for discrete measured groupoids}

\author{Ken Dykema}
\author{David Kerr}
\author{Mika{\"e}l Pichot}

\address{\hskip-\parindent
Ken Dykema, Department of Mathematics, Texas A{\&}M University,
College Station TX 77843-3368, U.S.A.}
\email{kdykema@math.tamu.edu}

\address{\hskip-\parindent
David Kerr, Department of Mathematics, Texas A{\&}M University,
College Station TX 77843-3368, U.S.A.}
\email{kerr@math.tamu.edu}

\address{\hskip-\parindent
Mika{\"e}l Pichot, Department of Mathematics and Statistics, McGill University,
Montreal, Quebec H3A 2K6, Canada}
\email{mikael.pichot@mcgill.ca}

\thanks{K.D. was partially supported by NSF grant DMS-0901220, D.K. was partially supported by NSF grant DMS-0900938,
and M.P. was partially supported by JSPS}

\begin{abstract}
For discrete measured groupoids preserving a probability measure
we introduce a notion of sofic dimension that measures the
asymptotic growth of the number of sofic approximations on larger and larger
finite sets. In the case of groups we give a formula 
for free products with amalgamation over an amenable subgroup. We also prove 
a free product formula for measure-preserving actions.
\end{abstract}
\date{November 11, 2012}

\maketitle

\section{Introduction}

For certain kinds of infinite-dimensional structures it is possible to define a notion of volume or
complexity by measuring the asymptotic growth of the number of models in finite or finite-dimensional 
spaces of increasing size. This idea occurs prototypically in the statistical mechanics 
of infinite lattice systems, where one defines the mean entropy as a limit 
of weighted averages over finite-volume configurations. 
Via the action of lattice translation, this mean entropy can be recast
as a particular instance of dynamical entropy. For continuous
actions of amenable groups on compact Hausdorff spaces, dynamical entropy can be expressed either 
in information-theoretic terms using open covers or as a measure of the exponential growth 
of the number of partial orbits up to an observational error. 
Kolmogorov-Sinai entropy for measure-preserving actions of amenable groups can also be viewed in a similar dual way.

In a recent breakthrough, Lewis Bowen showed how the statistical mechanical idea 
of counting finitary models can be used as a means for defining dynamical entropy
in the very broad context of measure-preserving actions of countable sofic groups \cite{Bow10}.
A generalization of both amenability and residual finiteness, soficity is defined by the existence of approximate actions
on finite spaces, and it is these approximate actions which provide the setting for dynamical models. 
Hanfeng Li and the second author 
subsequently applied an operator algebra perspective to develop a more general approach to sofic entropy 
that yields both topological and measure-theoretic entropy invariants \cite{KerLi10,Ker12}.

This ``microstates'' approach to dynamical entropy can be compared with the packing formulation of
Voiculescu's free entropy dimension for tracial von Neumann algebras, for which the finite modeling takes 
place in matrix algebras instead of
finite sets or commutative finite-dimensional $C^*$-algebras. While sofic entropy
measures the exponential growth of the number of dynamical models relative to a fixed background sequence
of sofic approximations for the group, free entropy dimension
counts the number of matrix models for a finite set of operators (which might for instance come from both the
group and the space in a crossed product) up to an observational error and measures the growth 
of this quantity within an appropriate superexponential regime as the dimension 
of the matrix algebra tends to infinity. A major open problem concerning free entropy dimension is whether
it takes a common value on all finite generating sets and hence yields an invariant for the von Neumann
algebra. This is true in the hyperfinite case \cite{Jun04} but is unknown for free group factors.
In \cite{Shl03} Shlyakhtenko defined a free-entropy-type quantity using a combination of
permutations and general unitaries that yields an invariant for discrete measured 
equivalence relations.

In the present paper we define a notion of sofic dimension for groups and measure-preserving group 
actions that is based on discrete models in the manner of sofic entropy but counts
all models for the structure in the spirit of free entropy dimension. In fact we set up the theory of sofic 
dimension in the more natural and general framework of discrete measured groupoids
(more precisely, what we call probability-measure-preserving (p.m.p.)\ groupoids), so that 
it simultaneously specializes to groups, measure-preserving group actions, and 
probability-measure-preserving equivalence relations.
This means in particular that, for free measure-preserving actions of countable groups, sofic dimension is
an orbit equivalence invariant. 

The dimension is first defined with respect to several local parameters. One of these parameters determines
the scale at which the sofic approximations are distinguished, while the others determine
how good the sofic approximation is. We take an infimum over the latter and then a supremum
over the former to produce an invariant. 
We show that the value of this invariant can be determined by restricting the parameters to a
generating set, which renders it accessible to computation.
Our main result in the group case gives, under certain regularity assumptions, a formula for the sofic dimension 
of free products with amalgamation over an amenable group, in analogy with those for free entropy dimension
\cite{BroDykJun10} and cost \cite{Gab00}. This gives in particular a 
free probability proof of the fact that soficity for groups is preserved under free products with
amalgamation over an amenable group, which was shown in \cite{ColDyk10} assuming the amenable
group to be monotileable and in \cite{EleSza10,Pau10} in general.
We also establish a free product formula 
for measure-preserving actions under similar regularity assumptions.
In a separate paper devoted to the equivalence relation viewpoint \cite{DykKerPic11} 
we give a formula for the sofic dimension of a free product of equivalence relations amalgamated 
over an amenable subrelation, which applies most notably to free actions of free products of groups 
amalgamated over an amenable subgroup.

We begin in Section~\ref{S-gd} by defining the sofic dimension $s(\sG )$ of a p.m.p.\ groupoid $\sG$, 
as well as a variant $\slower (\sG )$, the lower sofic dimension, obtained by replacing the limit supremum 
in the definition of $s(\sG )$ with a limit infimum.
We prove in Theorem~\ref{T-generating} that these invariants can be computed on any finite generating set. 
We also show that the lower sofic dimension of a sofic p.m.p.\ groupoid with
infinite classes is at least $1$ (Proposition~\ref{P-inf gd}). In Section~\ref{S-groups} we record a couple
of basic results for countable discrete groups, including the fact that $s(G) = 1 - |G|^{-1}$ for a finite group $G$ 
(Proposition~\ref{P-finite}). Section~\ref{S-amalg groups} contains the amalgamated free product formula for groups,
Theorem~\ref{T-amalgamated}, which asserts that, under suitable regularity assumptions, if $G_1$ and $G_2$ are 
countable discrete groups and $H$ is a common amenable subgroup then
\[
s(G_1 *_H G_2 ) = s(G_1 ) + s(G_2 ) - 1 + \frac{1}{|H|} .
\]
As corollaries we deduce that $s(F_r ) = \slower (F_r ) = r$ for every $r\in\Nb\cup\{ \infty \}$
where $F_r$ is the free group of rank $r$, and $s(G) = \slower (G) = 1 - |G|^{-1}$ for amenable groups $G$.
In Section~\ref{S-group actions} we show how the definition of sofic dimension for a measure-preserving action 
$G\curvearrowright X$ of a countable discrete group on a probability space, 
for which we use the notation $s(G,X)$ and $\slower (G,X)$, can be reformulated so as to conveniently separate
the group and space components. We use this reformulation in Section~\ref{S-free prod actions} to establish the 
free product formula, Theorem~\ref{T-free prod action}, which asserts that,
under suitable regularity assumptions, if $G_1$ and $G_2$ are countable
discrete groups and $G_1 * G_2 \curvearrowright X$ 
is a measure-preserving action on a probability space, then
\[
s(G_1 * G_2 ,X) = s(G_1 ,X) + s(G_2 ,X) .
\]
As a corollary, for every $r\in\Nb$ we obtain $s(F_r ,X) = \slower (F_r ,X) = r$ for every measure-preserving 
action of the free group $F_r$.

While working on this project we learned that Mikl\'{o}s Ab\'{e}rt, Lewis Bowen, and Nikolai Nikolov also defined
and studied the same notion of sofic dimension for groups. It is their terminology that we have adopted. 
Our paper answers a question of Mikl\'{o}s Ab\'{e}rt, who asked whether the theory can be extended to measure-preserving 
group actions \cite{AbeSze09}. 
\medskip

\noindent{\it Acknowledgments.} 
Some of this research was conducted while the first author was attending the
Erwin Schr\"odinger Institute in Vienna and he would like to thank
the institute and the organizers of the program on Bialgebras and
Free Probability. 
The second author would like to thank Yasuyuki Kawahigashi for hosting his
January 2010 visit to the University of Tokyo during which
the initial stages of this work were carried out. The third author 
thanks Narutaka Ozawa for helpful discussions on the subject.
We are especially grateful to Hanfeng Li for extensive comments and corrections.

\section{Probability-measure-preserving groupoids}\label{S-gd}

For a groupoid $\sG$ we denote the source and range maps by $\so$ and $\ra$, respectively,
and write $\sG^0$ for the set of units of $\sG$. For a set $A\subseteq\sG^0$ we write
$\sG_A$ for the subgroupoid of $\sG$ consisting of all $x\in\sG$ such that $\so (x) \in A$
and $\ra (x) \in A$, with unit space $A$.

A {\it discrete measurable groupoid} is a groupoid $\sG$ with the structure of a standard Borel space
such that $\sG^0$ is a Borel set, the source, range, multiplication, and inversion maps are all Borel,
and $\so^{-1} (x)$ is countable for every $x\in\sG^0$.

A {\it probability-measure-preserving (p.m.p.)\ groupoid} is a discrete measurable groupoid $\sG$ 
paired with a Borel probability measure $\mu$ on $\sG^0$ such that
\[
\int_{\sG^0} | \so^{-1} (x)\cap B | \, d\mu (x) 
= \int_{\sG^0} | \ra^{-1} (x)\cap B | \, d\mu (x) 
\]
for every Borel set $B\subseteq\sG$. The assignment of this common value to a Borel set $B$ defines 
a $\sigma$-finite Borel measure on $\sG$ which restricts to $\mu$ on $\sG^0$. 
It will also be denoted by $\mu$. When speaking about a p.m.p.\ groupoid $(\sG ,\mu )$ we
will often simply write $\sG$ with the measure $\mu$ being understood.

Let $(\sG ,\mu )$ and $(\sH ,\nu )$ be p.m.p.\ groupoids. 
We say that $\sG$ and $\sH$ are {\it isomorphic} if there exist Borel sets $A\subseteq \sG^0$ and 
$B\subseteq \sH^0$ such that $A$ and $B$ have full measure in 
$\sG^0$ and $\sH^0$, respectively, and a groupoid isomorphism
$\varphi : \sG_A \to\sH_B$ which is Borel and satisfies $\varphi_* \mu = \nu$.

In order to express the notion of a finite approximation to a p.m.p.\ groupoid $(\sG ,\mu )$ 
that will be the basis of our definition of sofic dimension, we will think of $\sG$ in terms of its inverse
semigroup $I_\sG$ of partial isometries, defined as follows.
Let $B$ be a Borel subset of $\sG$ such that the restrictions
of $\so$ and $\ra$ to $B$ are injective. We obtain a partial isometry $s_B$ on $L^2 (\sG ,\mu )$ 
by declaring $s_B \xi (x)$ for $\xi\in L^2 (\sG ,\mu )$ and $x\in\sG$ to be $\xi (y^{-1} )$  
where $y$ is the element of $\ra^{-1} (\so (x))$ satisfying $xy\in B$, 
or $0$ if there is no such $y$.
We then define $I_\sG$ as the collection of partial isometries
which arise in this way. When convenient we will think of elements in $I_\sG$ themselves 
as characteristic functions on $\sG$ which are identified if they agree $\mu$-almost everywhere. 
The collection $I_\sG$ forms an inverse semigroup, where the inverse
of an element $s$ is its adjoint $s^*$, and it is closed under taking sums of finitely many 
pairwise orthogonal elements. It is a subset of the von Neumann algebra $\VN (\sG )$ 
of $\sG$, which can be defined as the strong operator closure of the space $A$ of functions $\eta$ on $\sG$ 
for which the functions $x\mapsto \sum_{y\in\ra^{-1} (x)} |\eta(y)|$ and $x\mapsto \sum_{y\in\so^{-1} (x)} |\eta(y)|$
on $\sG_0$ are essentially bounded, with $A$ being represented on $L^2 (\sG ,\mu )$ via the convolution 
\[
\eta * \xi (x) = \sum_{y\in\ra^{-1} (\so (x))} \eta(xy)\xi(y^{-1} ) .
\]
One can show in fact that $I_\sG$ 
generates $\VN (\sG )$ as a von Neumann algebra. 

Write $\tau$ for the normal trace on $\VN (\sG )$ associated to $\mu$ and $\| \cdot \|_2$ 
for the $2$-norm $a\mapsto \tau (a^* a)^{1/2}$ on $\VN (\sG )$.
For elements $a$ in $L^1 (\sG ,\mu )$, and in particular for $a$ in the linear span of $I_\sG$, the trace is given by
\begin{align*}
\tau (a) = \langle a1_{\sG^0} , 1_{\sG^0} \rangle_{L^2 (\sG ,\mu )} = \int_{\sG^0} a*1_{\sG^0} \, d\mu
= \int_{\sG_0} a(x) d\mu(x).
\end{align*}
We will be using the $2$-norm to measure distances between elements of $I_\sG$.

The three basic examples of p.m.p.\ groupoids are the following:

\begin{enumerate}
\item a countable discrete group $G$, in which case $I_G$ can be identified with $G$ along with the
zero element, and the inverse of the inverse semigroup is the same as the group inverse,

\item a countable discrete group acting by measure-preserving transformations 
on a standard probability space, which reduces to the previous example
when the space consists of a single point, and

\item a measure-preserving equivalence relation $R$ on a standard probability space, 
in which case $I_R$ is the collection of partial transformations 
$\varphi$ with nonnull domain such that $(x,\varphi (x))\in R$ for all $x$ in the domain of $\varphi$,
with two such partial transformations being
identified if they agree on a subset which has full measure in the domain of each.
\end{enumerate}

We write $I_d$ for the inverse semigroup of all partial transformations of $\{ 1,\dots ,d \}$. This is the
inverse semigroup associated to the full equivalence relation $\{ 1,\dots ,d \} \times \{ 1,\dots ,d \}$
on $\{ 1,\dots ,d \}$, which we view as a p.m.p.\ groupoid with respect to the uniform probability
measure on $\{ 1,\dots ,d \}$. We thus view $I_d$ both as the set of all partial transformations 
of $\{ 1,\dots ,d \}$ and as the set of all partial permutation matrices in $M_d$, i.e., partial
isometries whose entries are all either $0$ or $1$. 
The context will dictate which particular meaning is intended. 
We write $S_d$ for the subset of $I_d$ consisting of all permutations of $\{ 1,\dots ,d \}$, 
which we also regard as permutation matrices in $M_d$ in accordance with our double interpretation of $I_d$. 
For a finite set $E$ we write $\Sym (E)$ for the set of all permutations of $E$. This
will occasionally be convenient as a substitute for $S_d$ when dealing with a $d$-element set 
that comes with a description other than $\{ 1,\dots ,d\}$.

We write the unique tracial state on $M_d$ as $\tr$, or sometimes $\tr_d$ if there are matrix algebras
of different dimensions at play.
Note that for $s\in I_d$ the square $\| s \|_2^2 = \tr (s^* s)$ of the $2$-norm
is equal to $1/d$ times the cardinality of the domain of $s$ as a partial transformation.
Also, for any $s,t\in I_d$ we have, writing $\xi_1 , \dots ,\xi_d$ for the standard basis vectors of $\Cb^d$
and $\domain$ for domain,
\begin{align*}
\| s-t \|_2^2 
&= \tau ((s-t)^* (s-t)) 
= \frac{1}{d} \sum_{j=1}^d \langle (s-t)\xi_j , (s-t)\xi_j \rangle \\
&\geq \frac{1}{d} \big| \{ c\in \{ 1,\dots ,d \} : sc \neq tc \} \big| \\
&= \frac{1}{d} \big| (\domain (s) \Delta \domain (t))\cup \{ c\in \domain (s) \cap \domain (t) : sc \neq tc \} \big| .
\end{align*}
This inequality will be useful for example in the proof of Lemma~\ref{L-nbhd gd}. In the case that
$s,t\in S_d$ we have
\begin{align*}
 \| s-t \|_2^2 = \frac{1}{d} \big| \{ c\in \{ 1,\dots ,d \} : sc \neq tc \} \big| .
\end{align*}

Given a p.m.p.\ groupoid $\sG$ and a $d\in\Nb$, we wish to count the number of models of $I_\sG$
in $I_d$. We do this by counting the number of approximately multiplicative maps $I_\sG \to I_d$.

For a subset $\Omega$ of $I_\sG$ we write $\Omega^*$ for $\{ s^* : s\in\Omega \}$.
For $n\in\Nb$ we write $\Omega^{\times n}$ for the $n$-fold Cartesian product 
$\Omega\times\cdots\times\Omega$.
This is to be distinguished from $\Omega^n$, which denotes the set of all products $s_1 \cdots s_n$
where $s_1 , \dots , s_n \in \Omega$. We write $\Omega^{\leq n}$ for the set $\bigcup_{k=1}^n \Omega^n$
and $[\Omega ]$ for the linear span of $\Omega$ in $\VN (\sG)$. 

Given an $\Omega\subseteq I_\sG$ we write $I(\Omega )$ for the set of all elements in $I_\sG$
which can be written as a finite sum of elements in $\Omega$.
Note that the elements in such a sum must have pairwise orthogonal source projections, as well as
pairwise orthogonal range projections.

Let $\sG$ and $\sH$ be p.m.p.\ groupoids. 
Let $F$ be a finite subset of $I_\sG$. For an $n\in\Nb$ and a $\delta > 0$,
a linear map $\varphi : [I_\sG ] \to [I_\sH ]$ is said to be 
{\it $(F,n,\delta )$-approximately multiplicative} if 
\[
\| \varphi (s_1 \cdots s_k ) - \varphi (s_1 ) \cdots \varphi (s_k ) \|_2 < \delta 
\]
for all $k=1,\dots,n$ and $(s_1 ,\dots ,s_k )\in F^{\times k}$.
For $d,n\in\Nb$ and a $\delta > 0$ we define 
$\SA (F ,n,\delta ,d)$ to be the set of all $(F\cup F^* ,n,\delta )$-approximately
multiplicative linear maps $\varphi : [I_\sG ]\to [I_d ] = M_d$ such that 
$\varphi ((F\cup F^* )^{\leq n})\subseteq I_d$ and $| \tr\circ\varphi (s) - \tau (s) | < \delta$ 
for all $s\in (F\cup F^* )^{\leq n}$. 

\begin{definition}
The p.m.p.\ groupoid $\sG$ is said to be {\it sofic} if for all finite sets 
$F\subseteq\sG$, $n\in\Nb$, and $\delta > 0$ the set  
$\SA (F ,n,\delta ,d)$ is nonempty for some $d\in\Nb$.
\end{definition}

Given sets $E$ and $A$ with $E\subseteq A$, a set $Z$, and a collection $\sY$ of maps $A\to Z$, we write
$|\sY |_E$ for the cardinality of the sets of restrictions $\varphi |_E$ where $\varphi\in\sY$.
Note that $\SA (F,n ,\delta ,d)\supseteq \SA (F',n' ,\delta' ,d)$ and hence
$|\SA (F,n ,\delta ,d)|_E \geq |\SA (F',n',\delta' ,d)|_{E'}$ whenever $F\subseteq F'$, $n\leq n'$,
$\delta\geq\delta'$, and $E$ and $E'$ are subsets of $I_\sG$ satisfying $E\supseteq E'$.

\begin{definition}\label{D-groupoid local}
Let $\Omega$ be a subset of $I_\sG$, $E$ and $F$ finite subsets of $I_\sG$, $n\in\Nb$, and $\delta > 0$. 
We set 
\begin{align*}
s_E (F,n,\delta ) &= \limsup_{d\to\infty} \frac{1}{d \log d} \log |\SA (F,n ,\delta ,d)|_E , \\
s_E (F,n) &= \inf_{\delta > 0} s_E (F,n,\delta ), \\
s_E (F) &= \inf_{n\in\Nb} s_E (F,n) , \\
s_E (\Omega ) &= \inf_F s_E (F) , \\
s(\Omega ) &= \sup_E s_E (\Omega ) 
\end{align*}
where $F$ in the second last line and $E$ in the last line both range over the finite subsets of $\Omega$.
We similarly define $\slower_E (F,n,\delta )$, $\slower_E (F,n)$, $\slower_E (F)$, $\slower_E (\Omega )$,
and $\slower (\Omega )$ by replacing the limit supremum in the first line with a limit infimum. If 
$\SA (F,n ,\delta ,d)$ is empty for all sufficiently large $d$ we set
$s_E (F,n,\delta ) = -\infty$, and if 
$\SA (F,n ,\delta ,d)$ is empty for arbitrarily large $d$ we set
$\slower_E (F,n,\delta ) = -\infty$.
\end{definition}

Note that if $\Omega$ is finite in the above definition then the notation $s_E (\Omega )$
is unambiguous since $s_E (F' ) \leq s_E (F)$ whenever $F$ and $F'$ are finite subsets of $I_\sG$
with $F' \supseteq F$.

\begin{definition}
The {\it sofic dimension} $s(\sG )$ of $\sG$ is defined as $s(I_\sG )$, and the 
{\it lower sofic dimension} $\slower (\sG )$ as $\slower (I_\sG )$.
\end{definition}

It is clear that sofic dimension and lower sofic dimension are invariants for isomorphism of p.m.p.\ groupoids.

For the remainder of the section $(\sG ,\mu )$ will be an arbitrary p.m.p.\ groupoid.

Given a finite set $E\subseteq I_\sG$, on the set of all unital linear maps from 
$[ I_\sG ]$ to $[I_d ] = M_d$ we define the pseudometric
\[
\rho_E (\varphi , \psi ) = \max_{s\in E} \| \varphi (s) - \psi (s) \|_2 . 
\]
For $\varepsilon\geq 0$ write $N_\varepsilon (\cdot ,\rho )$ for the maximal cardinality 
of an $\varepsilon$-separated subset 
with respect to the pseudometric $\rho$. Note that $N_0 (\SA (F ,n ,\delta ,d),\rho_E ) = |\SA (F ,n ,\delta ,d)|_E$.

\begin{definition}
Let $E$ and $F$ be finite subsets of $\sG$, $n\in\Nb$, and $\delta > 0$. We set
\begin{align*}
s_{E,\eps} (F,n,\delta ) &= \limsup_{d\to\infty} \frac{1}{d \log d} \log N_\eps (\SA (F,n ,\delta ,d),\rho_E ), \\
s_{E,\eps} (F,n) &= \inf_{\delta > 0} s_{E,\eps} (F,n,\delta ) .
\end{align*}
We similarly define $\slower_{E,\eps} (F,n,\delta )$ and $\slower_{E,\eps} (F,n)$
by replacing the limit supremum in the first line with a limit infimum. 
If $\SA (F,n ,\delta ,d)$ is empty for all sufficiently large $d$ we set
$s_{E,\eps} (F,n,\delta ) = -\infty$, and if 
$\SA (F,n ,\delta ,d)$ is empty for arbitrarily large $d$ we set
$\slower_{E,\eps} (F,n,\delta ) = -\infty$.
\end{definition}

\begin{lemma}\label{L-nbhd gd}
For every $\kappa > 0$ there is an $\varepsilon > 0$ such that
\[
| \{ t\in I_d : \| t-s \|_2 < \varepsilon \} | \leq d^{\kappa d} .
\]
for all $d\in\Nb$ and $s\in I_d$.
\end{lemma}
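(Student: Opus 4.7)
The idea is to reduce the problem to a purely combinatorial count. First I would invoke the inequality displayed just above the lemma: for $s,t \in I_d$,
\[
\|s-t\|_2^2 \;\geq\; \tfrac{1}{d}\bigl|(\domain s \,\Delta\, \domain t) \cup \{c \in \domain s \cap \domain t : sc \neq tc\}\bigr|.
\]
Hence $\|t-s\|_2 < \varepsilon$ forces $t$ to agree with $s$ off some set $S \subseteq \{1,\ldots,d\}$ with $|S|<\varepsilon^2 d$, and such a $t$ is completely determined by $S$ together with $t|_S$. Since each position in $S$ admits at most $d+1$ possibilities (undefined, or any of $d$ values), I obtain the combinatorial upper bound
\[
\bigl|\{t \in I_d : \|t-s\|_2 < \varepsilon\}\bigr| \;\leq\; \sum_{k=0}^{r}\binom{d}{k}(d+1)^k, \qquad r := \lfloor \varepsilon^2 d \rfloor.
\]

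Next I would estimate this sum asymptotically. The summand $\binom{d}{k}(d+1)^k$ is nondecreasing in $k$ on $0\leq k<d$ (by checking successive ratios), so the sum is bounded by $(r+1)\binom{d}{r}(d+1)^r$, which for $r\geq 1$ is at most $(r+1)\bigl(ed(d+1)/r\bigr)^r$ via the standard estimate $\binom{d}{r}\leq(ed/r)^r$. Taking the logarithm and substituting $r\leq\varepsilon^2 d$, the leading term is $\varepsilon^2 d\log d$, with lower-order remainders of order $d$ (with an $\varepsilon$-dependent coefficient) and $\log d$. Thus, choosing $\varepsilon$ so that $\varepsilon^2<\kappa$, there is a threshold $d_0=d_0(\varepsilon,\kappa)$ such that the combinatorial bound is at most $d^{\kappa d}$ for every $d\geq d_0$.

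Finally I would deal with the finitely many small $d<d_0$ by observing that any two distinct elements of $I_d$, viewed as $\{0,1\}$-matrices, differ in at least one entry and so satisfy $\|s-t\|_2\geq 1/\sqrt{d}$. Consequently, by further shrinking $\varepsilon$ below $1/\sqrt{d_0}$, the ball of radius $\varepsilon$ around any $s\in I_d$ with $d<d_0$ reduces to $\{s\}$, and the desired bound holds trivially since $d^{\kappa d}\geq 1$ for $d\geq 1$. Setting $\varepsilon := \min\bigl(\sqrt{\kappa/2},\,1/\sqrt{d_0}\bigr)$ then completes the argument. No step is conceptually difficult; the only care required is the bookkeeping of constants, so that a single $\varepsilon$ serves both the asymptotic counting regime and the bounded-$d$ regime, which is why the two-regime structure is unavoidable.
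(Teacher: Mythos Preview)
Your proof is correct and follows essentially the same approach as the paper: both use the displayed inequality $\|s-t\|_2^2 \geq \tfrac{1}{d}|\{c:sc\neq tc\}|$ to reduce to the combinatorial bound $\binom{d}{\lfloor \varepsilon^2 d\rfloor}\,d^{\lfloor \varepsilon^2 d\rfloor}$ (the paper writes $d$ where you more carefully write $d+1$), and then observe this is at most $d^{\kappa d}$ with $\kappa\to 0$ as $\varepsilon\to 0$. Your explicit two-regime handling of small $d$ is more careful than the paper's one-line assertion, though not strictly necessary---the entropy bound $\binom{d}{r}\leq e^{dH(\varepsilon^2)}$ already gives a $\kappa(\varepsilon)$ that works uniformly in $d\geq 2$.
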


\begin{proof}
Let $\varepsilon > 0$. Let $d\in\Nb$ and $s\in I_d$. 
Given a $t\in I_d$ 
we have
\[
\| s-t \|_2^2 \geq \frac{1}{d} \big| \{ c\in \{ 1,\dots ,d \} : sc\neq tc \} \big|
\]
and so if $t$ satisfies $\| s-t \|_2 < \varepsilon$ then
the cardinality of the set of all $c\in \{ 1,\dots ,d \}$ such that $tc \neq sc$ is at most $\varepsilon^2 d$.
Consequently the set $A$ of all $t\in I_d$ such that $\| t-s \|_2 < \varepsilon$ has cardinality at most
$\binom{d}{\lfloor \varepsilon^2 d \rfloor} d^{\lfloor \varepsilon^2 d \rfloor}$,
which is less than $d^{\kappa d}$ for some $\kappa > 0$ depending on 
$\varepsilon$ but not on $d$ with $\kappa\to 0$ as $\varepsilon\to 0$.
\end{proof}

\begin{lemma}\label{L-ineq}
Let $E$ be a finite subset of $I_\sG$. Let $\kappa > 0$. Then there is an $\varepsilon > 0$ such that
$s_E (F,n) \leq s_{E,\eps} (F,n) + \kappa$ and 
$\slower_E (F,n) \leq \slower_{E,\eps} (F,n) + \kappa$
for all finite sets $F\subseteq I_\sG$ containing $E$ and all $n\in\Nb$.
\end{lemma}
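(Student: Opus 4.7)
The plan is to compare the restriction-counting quantity $|\SA(F,n,\delta,d)|_E$ with the separated-point counting $N_\varepsilon(\SA(F,n,\delta,d),\rho_E)$ by covering $\SA(F,n,\delta,d)$ by $\rho_E$-balls of radius $\varepsilon$ and using Lemma~\ref{L-nbhd gd} to control how many distinct restrictions to $E$ can occur inside each ball.

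First I would apply Lemma~\ref{L-nbhd gd} with $\kappa/|E|$ in place of $\kappa$ to obtain an $\varepsilon > 0$ such that every $\|\cdot\|_2$-ball of radius $\varepsilon$ in $I_d$ contains at most $d^{(\kappa/|E|)d}$ elements, for all $d\in\Nb$. Next, for any finite $F\subseteq I_\sG$ with $F\supseteq E$, any $n\in\Nb$, and any $\delta>0$, I would pick a maximal $\varepsilon$-separated subset $\mathcal{Y}\subseteq\SA(F,n,\delta,d)$ with respect to $\rho_E$; by maximality, $\mathcal{Y}$ is automatically an $\varepsilon$-net, so every $\varphi\in\SA(F,n,\delta,d)$ lies within $\rho_E$-distance $\varepsilon$ of some $\psi\in\mathcal{Y}$. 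Since $E\subseteq F\subseteq (F\cup F^*)^{\leq n}$, the definition of $\SA$ forces $\varphi(s),\psi(s)\in I_d$ for each $s\in E$, so the restriction $\varphi|_E$, viewed as a tuple in $I_d^E$, is confined to a product of $|E|$ balls in $I_d$ centered at $(\psi(s))_{s\in E}$. By the choice of $\varepsilon$ this region has cardinality at most $(d^{(\kappa/|E|)d})^{|E|} = d^{\kappa d}$, and summing over $\psi\in\mathcal{Y}$ gives
\[
|\SA(F,n,\delta,d)|_E \;\leq\; |\mathcal{Y}|\cdot d^{\kappa d} \;=\; N_\varepsilon(\SA(F,n,\delta,d),\rho_E)\cdot d^{\kappa d}.
\]

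Taking logarithms, dividing by $d\log d$, and passing to the limit supremum (respectively limit infimum) as $d\to\infty$ then yields $s_E(F,n,\delta)\leq s_{E,\varepsilon}(F,n,\delta)+\kappa$ and the analogous inequality for $\slower$; the edge case in which $\SA(F,n,\delta,d)$ is eventually empty is handled by the $-\infty$ convention. Infimizing over $\delta>0$ delivers both of the claimed inequalities, uniformly in the choice of $F\supseteq E$ and $n\in\Nb$. The argument is essentially routine once Lemma~\ref{L-nbhd gd} is in hand; the only point that requires attention is that the budget $\kappa$ in the conclusion must be split evenly across the $|E|$ coordinates indexed by $E$, which is why Lemma~\ref{L-nbhd gd} is applied with parameter $\kappa/|E|$ rather than $\kappa$. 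I do not anticipate a serious obstacle.
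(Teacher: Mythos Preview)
Your proposal is correct and is precisely the routine argument the paper has in mind: the paper's own proof consists of the single sentence ``This is a straightforward consequence of Lemma~\ref{L-nbhd gd},'' and you have supplied exactly the expected details --- covering by $\rho_E$-balls around a maximal $\varepsilon$-separated set, using $E\subseteq F$ to ensure images lie in $I_d$, and splitting the budget $\kappa$ across the $|E|$ coordinates. There is no meaningful difference in approach.
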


\begin{proof}
This is a straightforward consequence of Lemma~\ref{L-nbhd gd}.
\end{proof}

\begin{definition}\label{D-gen}
A set $\Omega\subseteq I_\sG$ is said to be {\it generating} if 
$I(\bigcup_{n=1}^\infty (\Omega\cup\Omega^* )^n )$ is $2$-norm dense in $I_\sG$
and contains the orthogonal complement of each of its projections.
\end{definition}

In the case of a group, the above definition reduces to the usual notion of
generating set, modulo the possible inclusion of the zero element. 
Also, if $G\curvearrowright (X,\mu)$ is a probability-measure-preserving action,
$\cP$ is a set of projections in $L^\infty (X,\mu)$ which dynamically generates $L^\infty (X,\mu)$ 
(see the beginning of Section~\ref{S-group actions}), and $S$ is a 
generating set for $G$, then $\cP\cup S$ is a generating set 
in the sense of Definition~\ref{D-gen}. The second condition in Definition~\ref{D-gen} will 
be important in the proof of the following lemma.

\begin{lemma}\label{L-approx mult}
Let $\Omega$ be a subset of $I_\sG$ containing an independent generating set, and let $L$ be a finite subset of $I_\sG$.
Let $n\in\Nb$ and $\delta > 0$. Then there are a finite set $F\subseteq\Omega$, an $m\in\Nb$, and an
$(L,n,\delta )$-approximately multiplicative linear map $\theta : [I_\sG ]\to [I_\sG ]$
such that $\theta (L^{\leq n}) \subseteq I((F\cup F^*)^{\leq m} )$ and $\| \theta (s) - s \|_2 < \delta$
for all $s\in L^{\leq n}$.
\end{lemma}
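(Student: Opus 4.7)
The plan is to reduce the approximate-multiplicativity requirement to pointwise $2$-norm closeness, then build $\theta$ via an atomic decomposition adapted to the Boolean algebra of source and range projections of $L^{\leq n}$. If $\theta:[I_\sG]\to[I_\sG]$ is linear, sends each $s\in L^{\leq n}$ to a partial isometry in $I((F\cup F^*)^{\leq m})$ (so $\|\theta(s)\|_\infty\leq 1$), and satisfies $\|\theta(s)-s\|_2<\eps$ for every $s\in L^{\leq n}$, then for any $s_1,\dots,s_k\in L$ with $k\leq n$ the product $s_1\cdots s_k$ again lies in $L^{\leq n}$, and a standard telescoping estimate
\[
\|\theta(s_1\cdots s_k)-\theta(s_1)\cdots\theta(s_k)\|_2 \leq \|\theta(s_1\cdots s_k)-s_1\cdots s_k\|_2 + \sum_{i=1}^k \|s_i-\theta(s_i)\|_2 \leq (n+1)\eps
\]
shows that choosing $\eps\leq\delta/(n+1)$ yields $(L,n,\delta)$-approximate multiplicativity. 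Thus the task reduces to building such a $\theta$.

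To construct it, I would take $p_1,\dots,p_N$ to be the atoms of the finite Boolean algebra inside $L^\infty(\sG^0,\mu)$ generated by the source and range projections of the elements of $L^{\leq n}$, so that each $s\in L^{\leq n}$ admits the atomic decomposition $s=\sum_{i,j}p_isp_j$ into pairwise orthogonal partial isometries. The $2$-norm density of $I\big(\bigcup_k(\Omega\cup\Omega^*)^k\big)$ in $I_\sG$ together with the orthogonal-complement clause of Definition~\ref{D-gen} allow me to choose, for a suitably large finite $F\subseteq\Omega$ and $m\in\Nb$, pairwise orthogonal projections $\tilde p_1,\dots,\tilde p_N\in I((F\cup F^*)^{\leq m})$ summing to $1_{\sG^0}$ and approximating the $p_i$'s in $2$-norm. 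After enlarging $F$ and $m$ if necessary, the set $I((F\cup F^*)^{\leq m})$ also contains ``matrix-unit'' partial isometries $\tilde v_{ij}$ with source $\tilde p_j$ and range $\tilde p_i$ whenever some atomic piece $p_isp_j$ is nonzero, and for every $s\in L^{\leq n}$ and every cell $(i,j)$ a projection $\tilde q_{ij}^{(s)}\leq\tilde p_j$ in $I((F\cup F^*)^{\leq m})$ approximating the source projection $(p_isp_j)^*(p_isp_j)$. Setting $\tilde s=\sum_{i,j}\tilde v_{ij}\tilde q_{ij}^{(s)}$ then produces an element of $I((F\cup F^*)^{\leq m})$ with $\|\tilde s-s\|_2$ small.

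The main obstacle will be to ensure that the assignment $s\mapsto\tilde s$ is linearly consistent on $\spn(L^{\leq n})$, so that its linear extension actually carries every element of $L^{\leq n}$ (not merely a chosen basis) into $I((F\cup F^*)^{\leq m})$ rather than into its linear span. The trouble is that a sum of projections in $I((F\cup F^*)^{\leq m})$ is a projection only when the summands are mutually orthogonal, so a forced linear combination of partial-isometry approximations need not itself be a partial isometry. To deal with this I would, for each cell $(i,j)$, select a maximal linearly independent subset of $\{(p_isp_j)^*(p_isp_j):s\in L^{\leq n}\}$, approximate only these ``basis'' source-projections freely inside $I((F\cup F^*)^{\leq m})$, and then use the orthogonal-complement condition together with the independence of the generating set inside $\Omega$ to realise the remaining source-projections as genuine projections forced by the linear relations among the $(p_isp_j)^*(p_isp_j)$. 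With this compatible choice in place, the assignment $s\mapsto\tilde s$ on $L^{\leq n}$ extends unambiguously to a linear map on $\spn(L^{\leq n})$ inside the chosen basis of $L^{\leq n}$, and then to a linear map $\theta:[I_\sG]\to[I_\sG]$ by any extension on a complementary basis, completing the construction.
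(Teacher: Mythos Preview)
Your reduction via telescoping to the pointwise $2$-norm approximation problem is exactly what the paper does, and the idea of cutting each $s\in L^{\leq n}$ into pieces by a fine partition of the unit space is the right one. However, the ``matrix-unit'' construction breaks down. You propose to fix a single partial isometry $\tilde v_{ij}$ from $\tilde p_j$ to $\tilde p_i$ and then approximate every piece $p_isp_j$ by $\tilde v_{ij}\tilde q_{ij}^{(s)}$, i.e., by a restriction of $\tilde v_{ij}$. But distinct $s,s'\in L^{\leq n}$ can produce pieces $p_isp_j$ and $p_is'p_j$ that are \emph{different} partial isometries between the same atoms (think already of a finite group, where all source and range projections are $1$ and your atomic decomposition has a single cell). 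Two restrictions of one fixed $\tilde v_{ij}$ cannot both be $2$-norm close to two far-apart partial isometries. So in general $\tilde s=\sum\tilde v_{ij}\tilde q_{ij}^{(s)}$ will not approximate $s$. The linear-consistency repair you sketch (choosing a basis among the source projections $(p_isp_j)^*(p_isp_j)$) does not help here, because the linear relations among the $p_isp_j$ themselves, not among their source projections, are what must be respected.

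The paper circumvents both problems at once by refining the partition using a Borel partition $\cQ$ of the groupoid $\sG$ (not just of $\sG^0$) into bisections: one chooses $\cP$ fine enough that for every $p,q\in\cP$ the pieces $qsp$ and $qs'p$ are either equal or lie in different members of $\cQ$, hence the set $\tilde L=\{qsp:s\in L^{\leq n},\,p,q\in\cP\}$ is linearly independent. One then approximates each $s$ directly by some $v_s\in I((F\cup F^*)^{\leq\ell})$, sets $v_{s,p,q}$ to be the chosen $v_{s'}$ for the unique class with $qsp=qs'p$, and defines $\theta(s)$ as an orthogonalised sum of the $t_qv_{s,p,q}t_p$. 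Linear consistency is then automatic because the map factors through the linearly independent set $\tilde L$. The point is that the refinement must see the groupoid direction, not only the unit space; your Boolean algebra of source and range projections does not, and that is where the argument fails.
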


\begin{proof}
Set $\delta' = \delta /(n+1)$. 
By a standard selection theorem 
\cite[Thm.\ 18.10]{Kec95}, there exists a countable Borel partition $\cQ$ of $\sG$ into sets on which the
range and source maps are injective. We can then find
a finite set $\cP$ of characteristic functions of pairwise disjoint measurable subsets of 
$\sG^0$ such that $\| 1-\sum_{p\in\cP} p \|_2 < \delta' /3$ and 
for all $s,s'\in L^{\leq n}$ and $p,q\in\cP$ the subsets of $\sG$ of which $qsp$ and $qs' p$ are 
characteristic functions are either the same subset of some member of $\cQ$ or 
subsets of different members of $\cQ$. 
Set $\tilde{L} = \{ qsp : s\in L^{\leq n}\text{ and } p,q\in\cP \}$,
and observe that $\tilde{L}$ is linearly independent.

Let $\delta'' > 0$ be such that $\delta'' \leq \delta' /2$, to be further specified.
Since $\Omega$ is generating, there are a finite set $F\subseteq\Omega$ and a $k\in\Nb$ such that
for every $p\in\cP$ there is a $t_p \in I((F\cup F^*)^{\leq k} )$ such that
$\| p - t_p \|_2 < \delta''$. By requiring $\| p - t_p \|_2$ to be even smaller, replacing $t_p$
with its source projection, and doubling $k$, we may assume that $t_p$ is the characteristic function 
of a subset of $\sG^0$. In view of the second part of the definition of a generating set above,
we may also assume, by a straightforward perturbation argument
that involves cutting down $t_p$ for each $p\in\cP$ by the products of the orthogonal 
complements of the projections $t_q$ for $q\in\cP\setminus\{ q \}$ (which requires us to increase $k$),
that the projections $t_p$ for $p\in\cP$ are pairwise orthogonal.

Since $\Omega$ is generating, by taking $F$ larger if necessary we can find an $\ell\in\Nb$
such that for every $s\in L^{\leq n}$ there is a $v_s \in I((F\cup F^*)^{\leq\ell} )$ with $\| v_s - s \|_2 < \delta''$.
For all $p,q\in\cP$ and $s\in L^{\leq n}$ take a $v_{s,p,q} \in \{ v_{s'} : s'\in L^{\leq n} \text{ and } qsp = qs' p \}$ so that
if $qsp = qs' p$ for $p,q\in\cP$ and $s,s' \in L^{\leq n}$ then $v_{s,p,q} = v_{s',p,q}$.
For $s\in L^{\leq n}$ and $p,q\in\cP$ write $r_{s,p,q} = t_q v_{s,p,q} t_p$ and
\begin{gather*}
t_{s,p,q} =  r_{s,p,q}^* r_{s,p,q} \bigg( \prod_{(p',q') \neq (p,q)} (1-r_{s,p',q'}^* r_{s,p',q'}) \bigg)\hspace*{40mm} \\
\hspace*{40mm} \times \hspace*{1mm} r_{s,p,q}^* \bigg( r_{s,p,q}r_{s,p,q}^*  \prod_{(p',q') \neq (p,q)} (1-r_{s,p',q'}r_{s,p',q'}^* ) \bigg) r_{s,p,q}
\end{gather*}
where $(p',q')$ ranges in $\cP\times\cP$. Then the elements $t_q v_{s,p,q} t_p t_{s,p,q}$ for $p,q\in\cP$ have
pairwise orthogonal source projections and pairwise orthogonal range projections, and thus setting
$\theta(s) = \sum_{p,q\in\cP} t_q v_{s,p,q} t_p t_{s,p,q}$ we obtain a
map $\theta : L^{\leq n} \to I((F\cup F^*)^{\leq m} )$ for a suitably large $m$.
Note that for all $s\in L^{\leq n}$ the elements $qsp$ for $p,q\in\cP$ have pairwise orthogonal source projections
and pairwise orthogonal range projections, and that for all $p,q\in\cP$ we have, taking $s'$ such that $v_{s,p,q} = v_{s'}$,
\begin{align*}
\| t_q v_{s,p,q} t_p - qsp \|_2 &= \| t_q v_{s'} t_p - qs'p \|_2 \\
&\leq \| (t_q - q) v_{s'} t_p \|_2 + \| q (v_{s'} - s') t_p \|_2 + \| qs' (t_p - p) \|_2  < 3\delta'' .
\end{align*}
We can thus take $\delta''$ to be small enough to ensure that for every $s\in L^{\leq n}$ the element 
$t_q v_{s,p,q} t_p t_{s,p,q}$ is close enough to $qsp$ for all $p,q\in\cP$ so that $\| \theta(s) - \sum_{p,q\in\cP} qsp \|_2 < \delta' /3$, 
in which case, writing $z=\sum_{p\in\cP} p$,
\begin{align*}
\| \theta(s) - s \|_2 \leq \| \theta(s) - zsz \|_2 + \| (z-1)sz \|_2 + \| s(z-1) \|_2 < 3\cdot \frac{\delta'}{3} = \delta' .
\end{align*}

Define a map $\psi : \tilde{L} \to [I_\sG ]$ by setting 
$\psi (qsp) = t_q v_{s,p,q} t_p t_{s,p,q}$ for all $s\in L^{\leq n}$ and $p,q\in\cP$, which is well defined since
$v_{s',p,q} = v_{s,p,q}$ and hence also $t_{s',p,q} = t_{s,p,q}$ whenever $qs'p=qsp$.
Since $\tilde{L}$ is linearly independent, 
this extends to a linear map $[\tilde{L} ]\to [I_\sG ]$, which again will be denoted by $\psi$. 
Now if $\sum_i c_i s_i$ is a linear combination of elements of
$L^{\leq n}$ which is equal to zero, then
\begin{align*}
\sum_i c_i \theta (s_i) =  \sum_i c_i \sum_{p,q\in\cP} \psi (qs_i p) 
&= \psi \Big(\sum_i c_i \sum_{p,q\in\cP} qs_i p \Big) \\
&= \psi \Big(\Big(\sum_{p\in\cP} p\Big)\Big(\sum_i c_i s_i \Big)\Big(\sum_{p\in\cP} p\Big)\Big) 
= \psi (0) = 0 .
\end{align*}
It follows that the map $\theta$ extends to a linear map $[L^{\leq n} ] \to [I_\sG ]$, 
which we then extend arbitrarily to a linear map $[I_\sG ]\to [I_\sG ]$, again denoted by $\theta$.

Finally, given $j\in\{ 1,\dots ,n\}$ and $(s_1 , \dots , s_j )\in L^{\times j}$ we have
\begin{align*}
\lefteqn{\| \theta (s_1 \cdots s_j ) - \theta (s_1 )\cdots\theta (s_j ) \|_2}\hspace*{15mm} \\
\hspace*{10mm} &\leq \| \theta (s_1 \cdots s_j ) - s_1 \cdots s_j \|_2 \\
&\hspace*{10mm} \ + \sum_{i=1}^j \| s_1 \cdots s_{i-1} \|_\infty
\| s_i - \theta (s_i ) \|_2 \| \theta (s_{i+1} )\cdots\theta (s_j ) \|_\infty \\
&< \delta' + j\delta' \leq \delta ,
\end{align*}
showing that $\theta$ is $(L,n,\delta )$-approximately multiplicative. 
\end{proof}

\begin{lemma}\label{L-partial isometry}
Let $\delta > 0$. Then whenever $v$ and $w$ are elements of $I_\sG$ satisfying $\| vwv - v \|_2 < \delta$
and $\| wvw - w \|_2 < \delta$ one has $\| w - v^* \|_2 < 4\delta$.
\end{lemma}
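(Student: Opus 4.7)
\textit{Proof plan.}  My approach is measure-theoretic, viewing elements of $I_\sG$ as Borel bisections in $\sG$.  For $s\in I_\sG$ with underlying set $B_s\subseteq\sG$ one has $\|s\|_2^2 = \mu(B_s)$, so $\|w - v^*\|_2^2 = \mu(B_w\triangle B_{v^*})$.  Let $P_s$ and $P'_s$ denote the source and range ``projections'' of $s$, identified as Borel subsets of $\sG^0$.  The key observation is that $B_w\triangle B_{v^*}$ admits two natural decompositions.  Partitioning its elements by source yields
\[
\|w - v^*\|_2^2 = \mu(P_w\setminus P'_v) + \mu(P'_v\setminus P_w) + 2\mu(A),
\]
where $A = \{t\in P_w\cap P'_v : B_w|_t \neq B_{v^*}|_t\}$ records units at which $w$ and $v^*$ disagree as groupoid elements.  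Partitioning by range (equivalently, applying the same analysis to $\|w^* - v\|_2 = \|w - v^*\|_2$) yields the analogous formula with primed quantities and a range-disagreement set $A'$.

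Since $\sG$ is p.m.p., $\mu(P_s) = \mu(P'_s)$ for each $s$, and an elementary set-theoretic calculation gives
\[
\mu(P_w\setminus P'_v) + \mu(P_v\setminus P'_w) = \mu(P'_v\setminus P_w) + \mu(P'_w\setminus P_v).
\]
Averaging the two decompositions and applying this identity collapses to
\[
\|w - v^*\|_2^2 = \mu(P'_v\setminus P_w) + \mu(P'_w\setminus P_v) + \mu(A) + \mu(A').
\]

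To bound the four summands, apply the source-based partitioning now to $\mu(B_{vwv}\triangle B_v)$, classifying each unit $s\in P_v$ by whether $v(s)\in P_w$, whether $w(v(s))\in P_v$, and whether the resulting groupoid composition returns $B_v|_s$.  Each ``bad'' case is counted at least once in the symmetric difference, yielding $\|vwv - v\|_2^2 \geq \mu(P'_v\setminus P_w) + \mu(A)$.  The symmetric analysis of the second hypothesis gives $\|wvw - w\|_2^2 \geq \mu(P'_w\setminus P_v) + \mu(A')$.  Summing these and substituting into the cancelled formula produces $\|w - v^*\|_2^2 < 2\delta^2$, so $\|w - v^*\|_2 < \sqrt{2}\,\delta$, which lies comfortably below $4\delta$.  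The main obstacle I anticipate is the detailed case analysis underlying the last step: one must track groupoid elements (rather than mere source--target pairs, as would suffice for equivalence relations) and verify that the disagreement set implicit in the hypothesis coincides with the $A$ appearing in the source-side decomposition of $\|w - v^*\|_2^2$, and analogously for $A'$.
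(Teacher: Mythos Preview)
Your proof is correct and in fact yields the sharper bound $\|w-v^*\|_2<\sqrt{2}\,\delta$. The argument is sound throughout: the identity $\|w-v^*\|_2^2=\mu(B_w\triangle B_{v^*})$ holds because $\tau(vw)$ computes the measure of units at which $B_w$ and $B_{v^*}$ coincide, the averaging identity follows from $\mu(P_s)=\mu(P'_s)$, and the case analysis bounding $\|vwv-v\|_2^2$ from below is accurate once one notes that when $t=v(s)\in A$ the condition $vwv|_s=v|_s$ would force $B_w|_t=(B_v|_s)^{-1}=B_{v^*}|_t$, contradicting $t\in A$.

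Your approach differs genuinely from the paper's. The paper argues operator-algebraically inside $\VN(\sG)$: it exploits that $v$ and $w$ are partial isometries whose four support projections commute, bounds $\tau((1-w^*w)vv^*)$ and $\tau((1-v^*v)ww^*)$ separately (these are your $\mu(P'_v\setminus P_w)$ and $\mu(P'_w\setminus P_v)$), passes through intermediate estimates $\|vv^*-w^*w\|_2<\sqrt{3}\,\delta$ and $\|ww^*-v^*v\|_2<\sqrt{3}\,\delta$, and then assembles $\|v^*-w\|_2$ by a telescoping inequality. Your route is more combinatorial and specific to the bisection picture, but it is shorter, avoids the intermediate projection bookkeeping, and extracts the disagreement sets $A$, $A'$ directly, which is why your constant improves to $\sqrt{2}$. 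The paper's argument, on the other hand, would transfer verbatim to any pair of partial isometries in a tracial von Neumann algebra with commuting support projections.
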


\begin{proof}
When acting on the Hilbert space $L^2(\sG,\mu)$, $v$ and $w$ are partial 
isometries and the four projections $v^*v$, $vv^*$, $w^*w$ and $ww^*$ commute with each other.
Consider the projection $r=(1-w^*w)vv^*$. Then $vv^*r=r$ and $wr=0$ so that
$(v-vwv)(v^*rv)=vv^*rv-vwrv=rv$ and hence
\[
(v^*rv)(v-vwv)^*(v-vwv)(v^*rv)=v^*rv.
\]
This yields
\begin{align*}
\tau(r)=\tau(vv^*r)&=\tau(v^*rv)\\
&=\tau\big((v^*rv)(v-vwv)^*(v-vwv)(v^*rv)\big) \\
&\le\tau\big((v^*rv)(v-vwv)^*(v-vwv)(v^*rv)\big)\\
&\hspace*{20mm} \ +\tau\big((1-(v^*rv))(v-vwv)^*(v-vwv)(1-(v^*rv))\big) \\
&=\tau\big((v-vwv)^*(v-vwv)\big)<\delta^2.
\end{align*}
In a similar manner, interchanging $v$ and $w$, we find that $\tau(s)<\delta^2$, where $s=(1-v^*v)ww^*$.
We therefore have $vv^*=x+r$ and $ww^*=y+s$
for projections $x=(vv^*)(w^*w)$ and $y=(ww^*)(v^*v)$.
Consequently,
\begin{gather*}
\tau(w^*w)\ge\tau(x)\ge\tau(vv^*)-\delta^2 ,\\
\tau(v^*v)\ge\tau(y)\ge\tau(ww^*)-\delta^2 ,
\end{gather*}
and $|\tau(v^*v)-\tau(w^*w)|<\delta^2$.
This implies that $w^*w=x+a$ and $v^*v=y+b$
for projections $a$ and $b$, both of trace less than $2\delta^2$.
So $\|vv^*-w^*w\|_2=\sqrt{\tau(r+a)}<\sqrt3\delta$ and, similarly, $\|ww^*-v^*v\|_2<\sqrt3\delta$,
and hence
\begin{align*}
\|v^*-w\|_2 &= \|v^*vv^*-ww^*ww^*w\|_2 \\
&\le\|v^*(v-vwv)v^*\|_2+\|(v^*v-ww^*)w(vv^*)\|_2+\|(ww^*)w(vv^*-w^*w)\|_2 \\
&\le\|v-vwv\|_2+\|v^*v-ww^*\|_2+\|vv^*-w^*w\|_2 <4\delta.
\end{align*}
\end{proof}

\begin{lemma}\label{L-adjoint}
Let $F$ be a finite subset of $\sG$, $n$ an integer greater than $2$, $\delta > 0$, and $d\in\Nb$. 
Let $\varphi\in\SA (F,n,\delta ,d)$. Then $\| \varphi (s^* ) - \varphi (s)^* \|_2 < 4\delta$ for every $s\in F$.
\end{lemma}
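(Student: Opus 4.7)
The plan is to apply Lemma~\ref{L-partial isometry} to the pair $v = \varphi(s)$, $w = \varphi(s^*)$ inside the ambient inverse semigroup $I_d$. Since $\varphi \in \SA(F,n,\delta,d)$ we already know that $\varphi((F\cup F^*)^{\leq n}) \subseteq I_d$; in particular, since $s, s^* \in F\cup F^*$, both $\varphi(s)$ and $\varphi(s^*)$ are genuine partial isometries in $M_d$, which is the setting of Lemma~\ref{L-partial isometry} applied to the full equivalence relation p.m.p.\ groupoid on $\{1,\dots,d\}$.

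The key observation is the polar-decomposition identity $ss^*s = s$ that holds for any element $s$ of the inverse semigroup $I_\sG$ (viewed as a partial isometry on $L^2(\sG,\mu)$), together with the symmetric identity $s^*ss^* = s^*$. Since $n > 2$, the triple $(s,s^*,s)$ lies in $(F\cup F^*)^{\times 3}$ and the product $ss^*s$ lies in $(F\cup F^*)^{\leq n}$, so the $(F\cup F^*,n,\delta)$-approximate multiplicativity of $\varphi$ gives
\[
\|\varphi(s)\varphi(s^*)\varphi(s) - \varphi(s)\|_2 = \|\varphi(s)\varphi(s^*)\varphi(s) - \varphi(ss^*s)\|_2 < \delta,
\]
and similarly $\|\varphi(s^*)\varphi(s)\varphi(s^*) - \varphi(s^*)\|_2 < \delta$. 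These are precisely the hypotheses of Lemma~\ref{L-partial isometry} for $v = \varphi(s)$ and $w = \varphi(s^*)$, so that lemma yields $\|\varphi(s^*) - \varphi(s)^*\|_2 < 4\delta$, as required.

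There is no real obstacle here; the only thing to be careful about is the bookkeeping that ensures the definition of $\SA(F,n,\delta,d)$ actually gives us the two inequalities we need, which is why the hypothesis $n>2$ (so that products of length three are controlled) appears in the statement.
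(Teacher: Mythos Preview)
Your proof is correct and follows exactly the same approach as the paper: use $n\geq 3$ and approximate multiplicativity applied to the words $ss^*s=s$ and $s^*ss^*=s^*$ to obtain the two hypotheses of Lemma~\ref{L-partial isometry}, then invoke that lemma with $v=\varphi(s)$ and $w=\varphi(s^*)$.
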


\begin{proof}
Let $s\in F$. Since $n\geq 3$ we have 
\[ 
\| \varphi (s)\varphi (s^* )\varphi (s) - \varphi (s) \|_2 = \| \varphi (s)\varphi (s^* )\varphi (s) - \varphi (ss^* s) \|_2 < \delta
\]
and similarly $\| \varphi (s^* )\varphi (s)\varphi (s^* ) - \varphi (s^* ) \|_2 < \delta$, so that 
$\| \varphi (s^* ) - \varphi (s)^* \|_2 < 4\delta$ by Lemma~\ref{L-partial isometry}.
\end{proof}

\begin{theorem}\label{T-generating}
Let $\Omega$ be a generating subset of $I_\sG$. Then $s(\sG ) = s(\Omega )$
and $\slower (\sG ) = \slower (\Omega )$. 
\end{theorem}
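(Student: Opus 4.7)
The plan is to prove $s(\sG)=s(\Omega)$ (and $\slower(\sG)=\slower(\Omega)$ by an identical argument with limit suprema replaced by limit infima) via two complementary reductions, both driven by Lemma~\ref{L-approx mult}, which lets us approximately replace any finite list of elements of $I_\sG$ by $I$-sums of products of elements of $\Omega\cup\Omega^*$.

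\textbf{Reduction 1: $s_E(\Omega)=s_E(I_\sG)$ for every finite $E\subseteq\Omega$.} The inequality $\geq$ is immediate since the infimum defining $s_E(\Omega)$ is over a smaller collection. For $\leq$, given finite $F\subseteq I_\sG$ (which we may take to contain $E$) and $\kappa>0$, I would apply Lemma~\ref{L-approx mult} with $L=F$ to obtain a finite $F'\subseteq\Omega$, an integer $m$, and an $(F,n,\delta)$-approximately multiplicative $\theta:[I_\sG]\to[I_\sG]$ with $\theta(F^{\leq n})\subseteq I((F'\cup (F')^*)^{\leq m})$ and $\|\theta(s)-s\|_2<\delta$ on $F^{\leq n}$. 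Then for each $\varphi\in\SA(F',m'',\delta'',d)$ with $m''$ and $\delta''$ chosen large/small enough to control approximate multiplicativity on products of length $O(nm)$ in $F'\cup(F')^*$, I would build $\psi\in\SA(F,n,\delta',d)$ as follows: write $\theta(s)=\sum_j s_{j,1}\cdots s_{j,k_j}$ with pairwise orthogonal source and range projections in $I_\sG$, form the partial isometries $u_j=\varphi(s_{j,1})\cdots\varphi(s_{j,k_j})\in I_d$, and round $\sum_j u_j$ to a nearest partial isometry to define $\psi(s)$, extending linearly. Then $\|\psi(s)-\varphi(s)\|_2=O(\delta)$ on $E$, so an $\eps$-separated family of $\varphi$'s in $\rho_E$ produces an $(\eps/2)$-separated family of $\psi$'s. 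Combined with Lemma~\ref{L-ineq} this gives $s_E(F')\leq s_E(F)+\kappa$ for each $F$, and infimizing over $F$ yields $s_E(\Omega)\leq s_E(I_\sG)$.

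\textbf{Reduction 2: for each finite $E\subseteq I_\sG$ and $\kappa>0$ there is a finite $E'\subseteq\Omega$ with $s_E(I_\sG)\leq s_{E'}(I_\sG)+\kappa$.} Applying Lemma~\ref{L-approx mult} to $L=E$ yields $E'\subseteq\Omega$, an integer $m$, and $\theta$ with $\theta(E)\subseteq I((E'\cup(E')^*)^{\leq m})$ and $\|\theta(s)-s\|_2<\delta$ on $E$. For any finite $F\subseteq I_\sG$ containing $E'$ and any $\varphi\in\SA(F,n',\delta',d)$ with $n',\delta'$ chosen so that $\varphi$ is approximately multiplicative on $(E'\cup(E')^*)^{\leq m}$, the value $\varphi(s)$ for $s\in E$ is determined up to $2$-norm error $O(\delta)$ by the values $\varphi|_{E'}$, since $\varphi(s)\approx\varphi(\theta(s))=\sum_j\varphi(s_{j,1})\cdots\varphi(s_{j,k_j})$. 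Consequently any $\eps$-separated family in $\rho_E$ maps to an $(\eps-O(\delta))$-separated family in $\rho_{E'}$, giving $s_E(F)\leq s_{E'}(F)+\kappa$ by Lemma~\ref{L-ineq}, and infimizing over $F$ produces $s_E(I_\sG)\leq s_{E'}(I_\sG)+\kappa$. Combined with Reduction 1, this yields $s(\sG)=\sup_{E\subseteq I_\sG}s_E(I_\sG)\leq\sup_{E'\subseteq\Omega}s_{E'}(I_\sG)=\sup_{E'\subseteq\Omega}s_{E'}(\Omega)\leq s(\Omega)$, while Reduction 1 alone gives $s(\Omega)\leq s(\sG)$.

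\textbf{Main obstacle.} The technically delicate part is the construction in Reduction 1: verifying that the rounded $\psi(s)$ not only lies in $I_d$ but also satisfies the approximate multiplicativity and trace conditions of $\SA(F,n,\delta',d)$. This requires the $u_j$'s to have approximately pairwise orthogonal source and range projections, which follows from approximate multiplicativity of $\varphi$ on sufficiently long products in $F'\cup(F')^*$ together with Lemmas~\ref{L-partial isometry} and~\ref{L-adjoint}, which transfer the orthogonality and adjoint relations from $I_\sG$ to $I_d$ up to controlled $2$-norm errors. All the parameters $m,n,\delta,m'',\delta''$ must be threaded consistently so that the final $\delta'$ and the loss $\kappa$ in $s_E$ can be made arbitrarily small.
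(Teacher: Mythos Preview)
Your overall strategy is sound and reaches the result through a reorganization of the paper's argument, resting on the same two tools (Lemma~\ref{L-approx mult} and Lemma~\ref{L-ineq}). The paper proceeds symmetrically: it shows $s(\Omega)\leq s(\Upsilon)$ for any two generating sets by (i) approximating each $s\in E\subseteq\Omega$ by an element $\tilde s\in I(K^{\leq n})$ for some finite $K\subseteq\Upsilon$, and (ii) applying Lemma~\ref{L-approx mult} with $\Omega$ to produce a linear $\theta$ carrying $L^{\leq n}$ (for finite $L\subseteq\Upsilon$) into $I((F\cup F^*)^{\leq m})$ for some finite $F\subseteq\Omega$. The comparison is then made via the single composition $\varphi^\natural=\varphi\circ\theta$: one shows that if $\varphi,\psi\in\SA(F,2mn,\delta'',d)$ have $\varphi^\natural,\psi^\natural$ close in $\rho_K$, then $\varphi,\psi$ are close in $\rho_E$, which bounds $N_\varepsilon(\SA(F,\ldots),\rho_E)$ by $|\SA(L,n,\delta,d)|_K$. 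Your Reduction~2 is precisely step (i), and your Reduction~1 is essentially step (ii) run with $\Upsilon=I_\sG$; the modular separation into two reductions is perfectly legitimate.

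The one genuine problem in your write-up is the explicit rounding in Reduction~1. Defining $\psi(s)$ for each $s\in (F\cup F^*)^{\leq n}$ by rounding $\sum_j u_j$ to the nearest element of $I_d$ and then ``extending linearly'' is not well-posed: the rounded values need not respect the linear relations among elements of $(F\cup F^*)^{\leq n}$, so in general no linear extension exists. The paper's device avoids this entirely by setting $\varphi^\natural=\varphi\circ\theta$, which is linear because both factors are, and by having Lemma~\ref{L-approx mult} build $\theta$ so that approximate multiplicativity is inherited directly. If you adopt the same definition $\psi=\varphi\circ\theta$ in your Reduction~1, linearity and approximate multiplicativity come for free; the only condition of $\SA(F,n,\delta',d)$ that is not immediate is $\psi((F\cup F^*)^{\leq n})\subseteq I_d$, since $\varphi(\theta(s))=\sum_j\varphi(s_j)$ is a sum of partial permutation matrices whose source and range projections are only approximately orthogonal. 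This is exactly your ``main obstacle,'' and it is most cleanly handled not by rounding $\psi$ pointwise but by observing that the counting comparison only requires $\psi$ to lie within a controlled $\rho_E$-neighbourhood of $\SA(F,n,\delta',d)$: compare $N_\varepsilon$ rather than $|\cdot|_E$ on both sides and absorb the discrepancy via Lemma~\ref{L-ineq}.
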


\begin{proof}
The theorem is equivalent to the assertion that if $\Upsilon$ is another generating subset of $I_\sG$
then $s(\Omega ) = s(\Upsilon )$ and $\slower (\Omega ) = \slower (\Upsilon )$, and to verify this 
it suffices by symmetry to show that 
$s(\Omega ) \leq s(\Upsilon )$ and $\slower (\Omega ) \leq \slower (\Upsilon )$. We will establish the first of 
these inequalities, with the second following by the same argument with the limit supremum replaced 
everywhere by a limit infimum. 
In view of the definitions we may assume that $\Omega^* = \Omega$ and $\Upsilon^* = \Upsilon$.

Let $E$ be a finite subset of $\Omega$.
Let $\kappa > 0$. By Lemma~\ref{L-ineq} 
there is an $\eps > 0$ such that $s_E (F,n) \leq s_{E,\eps} (F,n) + \kappa$ for all finite sets $F\subseteq G$
and $n\in\Nb$.
Since $\Upsilon$ is generating, we can find a finite set $K\subseteq\Upsilon$ and an integer $n>1$ such 
that for every $s\in E$ there are $\gamma_{s,\ot} \in \{ 0,1 \}$ for which the element 
$\tilde{s} = \sum_{\ot\in \bigcup_{k=1}^n K^{\times k}} \gamma_{s,\ot} \check{\ot} \in I(K^{\leq n} )$
satisfies $\| s - \tilde{s} \|_2 < \eps /16$, where $\check{\ot}$ means $t_1 \cdots t_k$ for $\ot = (t_1 , \dots ,t_k )$.
By increasing $n$ if necessary we can find a finite set $L\subseteq\Upsilon$ satisfying $L^* = L$
and $K\subseteq L$ and a $\delta > 0$ such that
\[
\limsup_{d\to\infty} \frac{1}{d \log d} \log |\SA (L,n ,\delta ,d)|_K \leq s_K (\Upsilon ) + \kappa .
\]
Choose a $\delta' > 0$ such that $|K^{\leq n}| \delta' < \varepsilon /8$.
Since $\Omega$ is generating, by Lemma~\ref{L-approx mult} we can find a finite set $F\subseteq\Omega$
with $E\subseteq F$ and $F^* = F$,
an $m\in\Nb$, and an $(L,n,\delta' /4)$-approximately multiplicative linear map $\theta : [I_\sG ]\to [I_\sG ]$
with $\theta (L^{\leq n} ) \subseteq I(F_\sym^{\leq m} )$ 
such that $\| t - \theta (t) \|_2 < \delta' /2$ for every $t\in L^{\leq n}$. Observe that
for every $s\in E$ we have, since $K^{\leq n} \subseteq L^{\leq n}$,
\begin{align*}
\| s - \theta (\tilde{s} ) \|_2
\leq \| s-\tilde{s} \|_2 + \| \tilde{s} - \theta (\tilde{s} ) \|_2 
< \frac{\eps}{16} + \sum_{\ot\in \bigcup_{k=1}^n K^{\times k}} |\gamma_{s,\ot}| \| \check{\ot} - \theta (\check{\ot} ) \|_2 
< \frac{\eps}{8} ,
\end{align*}
an estimate that will be used towards the end of the proof.

Take a $\delta'' > 0$ such that 
\begin{enumerate}
\item[(i)] $|F^{\leq mn}| (1+n)\delta'' \leq \delta' /2$, and

\item[(ii)] for every linear map $\varphi$ from $[F^{\leq mn}]$ to a Hilbert space, if 
$| \langle \varphi (s),\varphi (t) \rangle - \langle s,t \rangle | < (4+4mn)\delta''$ for all $s,t\in F^{\leq mn}$
then $\| \varphi (f) \|_2 \leq 2\| f \|_2$ for all $f\in [F^{\leq mn}]$.
\end{enumerate}
Let $\varphi\in\SA (F,2mn,\delta'' ,d)$. Given $k\in \{ 1,\dots,mn \}$ and $s,t\in F^k$ and
writing $s = s_1 \cdots s_k$ and $t = t_1 \cdots t_k$ where $s_1 , \dots , s_k , t_1 , \dots ,t_k \in F$,
we have, using Lemma~\ref{L-adjoint},
\begin{align*}
\lefteqn{\| \varphi (t_k )^* \cdots \varphi (t_1 )^* - \varphi (t_k^* )\cdots \varphi (t_1^* ) \|_2} \hspace*{10mm} \\
\hspace*{10mm} &\leq \sum_{i=1}^k \| \varphi (t_k )^* \cdots \varphi (t_{i+1} )^*  (\varphi (t_i )^* - \varphi (t_i^* )) 
\varphi (t_{i-1}^* )\cdots \varphi (t_1^* ) \|_2 \\
&< 4mn\delta''
\end{align*}
so that
\begin{align*}
\lefteqn{\| \varphi(t)^* \varphi (s) - \varphi (t^* s) \|_2}\hspace*{10mm} \\
\hspace*{10mm} &\leq \|\varphi(t_1\cdots t_k)^*-(\varphi(t_1)\cdots\varphi(t_k))^*\|_2\|\varphi(s)\|_\infty \\
&\hspace*{10mm}\ + \|\varphi(t_k)^*\cdots\varphi(t_1)^*-\varphi(t_k^*)\cdots\varphi(t_1^*)\|_2\|\varphi(s)\|_\infty \\
&\hspace*{10mm}\ + \|\varphi(t_k^*)\cdots\varphi(t_1^*)\|_\infty\|\varphi(s_1\cdots s_k)-\varphi(s_1)\cdots\varphi(s_k)\|_2 \\
&\hspace*{10mm}\ + \|\varphi(t_k^*)\cdots\varphi(t_1^*)\varphi(s_1)\cdots\varphi(s_k)-\varphi(t^*s)\|_2 \\
&< (3+4mn)\delta''
\end{align*}
and hence
\begin{align*}
| \langle \varphi (s),\varphi (t) \rangle - \langle s,t \rangle |
&\leq | \tr (\varphi(t)^* \varphi (s) - \varphi (t^* s)) | + | \tr\circ\varphi (t^* s) - \tau (t^* s) | \\
&< \| \varphi(t)^* \varphi (s) - \varphi (t^* s) \|_2 + \delta'' < (4+4mn)\delta'' .
\end{align*}
It follows by our choice of $\delta''$ that $\| \varphi (f) \|_2 \leq 2\| f \|_2$ for all $f\in [F^{\leq mn}]$.
Write $\varphi^\natural$ for $\varphi\circ\theta$. 
We will show that $\varphi^\natural \in \SA (L,n,\delta' ,d)$. Let $k\in \{ 1,\dots ,n\}$ and $t_1 , \dots , t_k \in L$.
For each $i=1,\dots ,k$ we can write 
$\theta (t_i ) = \sum_{\os\in \bigcup_{j=1}^m F^{\times j}} \lambda_{i,\os} \check{\os}$
where $\lambda_{i,\os} \in \{ 0,1 \}$ and $\check{\os}$ means $s_1 \cdots s_j$ for $\os = (s_1 , \dots ,s_j )$.
For every $k=1,\dots ,n$ and $(\os_1 ,\dots ,\os_k )\in F^{\times j_1} \times\cdots\times F^{\times j_k}$ 
where $1\leq j_1 , \dots , j_k \leq m$ we have, 
writing $\os_i = (s_{i,1} ,\dots , s_{i,j_i} )$, 
\begin{align*}
\bigg\| \varphi \bigg( \prod_{i=1}^k \check{\os}_i \bigg) - \prod_{i=1}^k \varphi ( \check{\os}_i ) \bigg\|_2 
&= \bigg\| \varphi \bigg( \prod_{i=1}^k \prod_{j=1}^{j_i} s_{i,j} \bigg) - \prod_{i=1}^k \prod_{j=1}^{j_i} \varphi ( s_{i,j} ) \bigg\|_2 \\
&\hspace*{20mm} \ + \bigg\| \prod_{i=1}^k \prod_{j=1}^{j_i} \varphi ( s_{i,j} ) - 
\prod_{i=1}^k \varphi \bigg( \prod_{j=1}^{j_i} s_{i,j} \bigg) \bigg\|_2 \\
&<\delta''+\sum_{p=1}^k\bigg\|\prod_{i=1}^{p-1}\prod_{j=1}^{j_i} \varphi ( s_{i,j} )\bigg\|_\infty
\bigg\|\prod_{j=1}^{j_i} \varphi ( s_{p,j} )-\varphi \bigg(\prod_{j=1}^{j_i} s_{p,j}\bigg)\bigg\|_2 \\
&\hspace*{35mm}
\ \times\bigg\|\prod_{i=p+1}^k\varphi\bigg(\prod_{j=1}^{j_i} s_{i,j}\bigg)\bigg\|_\infty \\
&\le \delta'' + \sum_{i=1}^k \bigg\| \prod_{j=1}^{j_i} \varphi ( s_{i,j} ) - \varphi \bigg( \prod_{j=1}^{j_i} s_{i,j} \bigg) \bigg\|_2 \\
&< (1+n)\delta'' 
\end{align*}
so that, with $\os$ ranging over $\bigcup_{j=1}^m F^{\times j}$ and $(\os_1 ,\dots ,\os_k )$ 
over $(\bigcup_{j=1}^m F^{\times j} )^{\times k}$ in the sums below,
\begin{align*}
\lefteqn{\| \varphi (\theta (t_1 )\cdots \theta (t_k )) - \varphi (\theta (t_1 ))\cdots \varphi (\theta (t_k )) \|_2}\hspace*{20mm} \\
\hspace*{20mm} &= \bigg\| \varphi \bigg( \prod_{i=1}^k \sum_{\os} \lambda_{i,\os} \check{\os} \bigg)
- \prod_{i=1}^k \varphi \bigg( \sum_{\os} \lambda_{i,\os} \check{\os} \bigg) \bigg\|_2 \\
&= \bigg\| \sum_{(\os_1 ,\dots ,\os_k )}\bigg( \prod_{i=1}^k \lambda_{i,\os_i} \bigg) 
\bigg[ \varphi \bigg( \prod_{i=1}^k \check{\os}_i \bigg)
- \prod_{i=1}^k \varphi ( \check{\os}_i ) \bigg] \bigg\|_2 \\
&\leq \sum_{(\os_1 ,\dots ,\os_k )} \bigg\| \varphi \bigg( \prod_{i=1}^k \check{\os}_i \bigg)
- \prod_{i=1}^k \varphi ( \check{\os}_i ) \bigg\|_2 \\
&< |F^{\leq mn}| (1+n)\delta'' \\
&\leq \frac{\delta'}{2} .
\end{align*}
Therefore
\begin{align*}
\lefteqn{\| \varphi^\natural (t_1 \cdots t_k ) - \varphi^\natural (t_1 )\cdots \varphi^\natural (t_k ) \|_2}\hspace*{20mm} \\
&\leq \| \varphi (\theta (t_1 \cdots t_k ) - \theta (t_1 )\cdots \theta (t_k )) \|_2 \\
&\hspace*{15mm} \ + \| \varphi (\theta (t_1 )\cdots \theta (t_k )) - \varphi (\theta (t_1 ))\cdots \varphi (\theta (t_k )) \|_2 \\
&\leq 2 \| \theta (t_1 \cdots t_k ) - \theta (t_1 )\cdots \theta (t_k ) \|_2 + \frac{\delta'}{2} \\
&< \frac{\delta'}{2} + \frac{\delta'}{2} = \delta' .
\end{align*}
Finally, for $t\in L^{\leq n}$ we can write
$\theta (t) = \sum_{\os\in \bigcup_{k=1}^m F^{\times k}} \lambda_{t,\os} \check{\os}$
where $\lambda_{t,\os} \in \{ 0,1 \}$ and $\check{\os}$ means $s_1 \cdots s_k$ for $\os = (s_1 , \dots ,s_k )$,
so that
\begin{align*}
| \tr\circ\varphi (\theta (t)) - \tau (\theta (t)) |
&= \bigg| \sum_{\os\in \bigcup_{k=1}^m F^{\times k}} \lambda_{t,\os} (\tr\circ\varphi (\check{\os} ) - \tau (\check{\os} )) \bigg| \\
&\leq \sum_{\os\in \bigcup_{k=1}^m F^{\times k}} | \tr\circ\varphi (\check{\os} ) - \tau (\check{\os} ) | 
\leq |F^{\leq m}| \delta'' 
< \frac{\delta'}{2}
\end{align*}
and hence
\begin{align*}
|\tr\circ\varphi^\natural (t) - \tau (t) | 
&\leq | \tr\circ\varphi (\theta (t)) - \tau (\theta (t)) | + | \tau (\theta (t) - t ) | \\
&< \frac{\delta'}{2} + \| \theta (t) - t \|_2 
< \delta' .
\end{align*}
Thus $\varphi^\natural \in \SA (L,n,\delta' ,d)$, as desired. 

Let $\Gamma : \SA (F,mn,\delta'' ,d) \to \SA (L,n,\delta' ,d)$ be the map $\varphi \mapsto \varphi^\natural$.
Pick an $\varepsilon' > 0$ such that $2|L|^n n\varepsilon' < \varepsilon /4$. 
Let $Z$ be an $\varepsilon'$-net in $\SA (L,n,\delta' ,d)$ with respect to
$\rho_K$ of minimal cardinality. Each element of $Z$ within distance $\varepsilon'$ to 
$\Gamma (\SA (F,mn,\delta'' ,d))$ we perturb to an element of $\Gamma (\SA (F,mn,\delta'' ,d))$
in order to construct a set $Y \subseteq \SA (F,mn,\delta'' ,d)$ such that $|Y|\leq |Z|$ and
$\Gamma (Y )$ is a $2\varepsilon'$-net for $\Gamma (\SA (F,mn,\delta'' ,d))$ with respect to $\rho_K$.
Let $\varphi$ and $\psi$ be elements of $\SA (F,mn,\delta'' ,d)$ with 
$\rho_K (\varphi^\natural , \psi^\natural ) < 2\varepsilon'$. Then for $k\in \{1,\dots,n\}$
and $\ot = (t_1 , \dots ,t_k )\in K^{\times k}$ we have, since $K\subseteq L$,
\begin{align*}
\| \varphi^\natural (\check{\ot} ) - \psi^\natural (\check{\ot} ) \|_2
&\leq \| \varphi^\natural (t_1 \cdots t_k ) - \varphi^\natural (t_1 )\cdots \varphi^\natural (t_1 ) \|_2 \\
&\hspace*{15mm} \ + \| \varphi^\natural (t_1 )\cdots \varphi^\natural (t_k ) - \psi^\natural (t_1 )\cdots \psi^\natural (t_k ) \|_2 \\
&\hspace*{15mm} \ + \|  \psi^\natural (t_1 )\cdots \psi^\natural (t_k ) - \psi^\natural (t_1 \cdots t_k ) \|_2 \\
&< 2\delta' + \sum_{i=1}^k \| \psi^\natural (s_1 )\cdots \psi^\natural (s_{i-1} ) \|_\infty \| \varphi^\natural (t_i ) - \psi^\natural (t_i ) \|_2 
\| \varphi^\natural (s_{i+1}) \cdots\varphi^\natural (s_k ) \|_\infty \\
&< 2(\delta' + n\varepsilon' )
\end{align*}
and thus, for $s\in E$, with $\ot$ ranging over $\bigcup_{k=1}^n K^{\times k}$ in the sums below,
\begin{align*}
\| \varphi^\natural (\tilde{s} ) - \psi^\natural (\tilde{s} ) \|_2
&= \bigg\| \varphi^\natural \bigg( \sum_{\ot} \gamma_{s,\ot} \check{\ot} \bigg)
- \psi^\natural \bigg( \sum_{\ot} \gamma_{s,\ot} \check{\ot} \bigg) \bigg\|_2 \\
&= \bigg\| \sum_{\ot} \gamma_{s,\ot} (\varphi^\natural (\check{\ot} ) 
- \psi^\natural (\check{\ot} )) \bigg\|_2 \\
&\leq \sum_{\ot} \| \varphi^\natural (\check{\ot} ) - \psi^\natural (\check{\ot} ) \|_2 \\
&< 2|K^{\leq n}| (\delta' + n\varepsilon' ) 
< \frac{\varepsilon}{2}
\end{align*}
whence, using the fact that $E\subseteq F$,
\begin{align*}
\rho_E (\varphi , \psi ) &= \max_{s\in E} \| \varphi (s) - \psi (s) \|_2 \\
&\leq \max_{s\in E} ( \| \varphi (s - \theta (\tilde{s} )) \|_2 + \| \varphi^\natural (\tilde{s} ) - \psi^\natural (\tilde{s} ) \|_2
+ \| \psi (\theta (\tilde{s} ) - s) \|_2 ) \\
&< 4\max_{s\in E} \| s - \theta (\tilde{s} ) \|_2 + \frac{\varepsilon}{2} \\
&< 4\cdot \frac{\eps}{8} + \frac{\eps}{2} = \eps .
\end{align*}
Therefore $Y$ is an $\varepsilon$-net for $\SA (F,mn,\delta'' ,d)$ with respect to $\rho_E$, and so
\begin{align*}
N_\varepsilon (\SA (F,mn,\delta'' ,d),\rho_E ) \leq |Y| 
\leq |Z| &\leq N_{\varepsilon'} (\SA (L,n,\delta' ,d),\rho_K ) 
\leq |\SA (L,n,\delta ,d)|_K
\end{align*}
using the fact that $\delta' \leq\delta$.
Consequently
\begin{align*}
s_E (\Omega ) &\leq s_E (F,mn) \\
&\leq s_{E,\eps} (F,mn) + \kappa \\
&\leq \limsup_{d\to\infty} \frac{1}{d \log d} \log N_\varepsilon (\SA (F,mn,\delta'' ,d),\rho_E ) + \kappa \\
&\leq \limsup_{d\to\infty} \frac{1}{d \log d} \log |\SA (L,n,\delta ,d)|_K + \kappa \\
&\leq s_K (\Upsilon ) + 2\kappa 
\leq s(\Upsilon ) + 2\kappa .
\end{align*}
Since $E$ was an arbitrary finite subset of $\Omega$ and $\kappa$ an arbitrary positive number, 
we conclude that $s(\Omega )\leq s(\Upsilon )$.
\end{proof}

\begin{definition}
A set $\Omega\subseteq\sG$ is said to be {\it approximation regular} if $s(\Omega ) = \slower (\Omega )$. 
We say that $\sG$ is {\it approximation regular} if $s(\sG ) = \slower (\sG )$.
\end{definition}

We round out this section by recording a few basic facts about sofic dimension.

\begin{lemma}\label{L-mult}
Let $E$ and $F$ be nonempty finite subsets of $I_\sG$ and let $n\in\Nb$.
Consider a sequence $1\le d_1 < d_2 <\dots$ of integers where $\lim_{k\to\infty} d_{k+1}/d_k = 1$. Then
\begin{align*}
s_E (F,n) &= \inf_{\delta > 0} \limsup_{k\to\infty} \frac{1}{d_k\log d_k}\log |\SA(F,n,\delta,d_k) |_E , \\
\slower_E (F,n) &= \inf_{\delta > 0} \liminf_{k\to\infty} \frac{1}{d_k\log d_k}\log |\SA(F,n,\delta,d_k) |_E .
\end{align*}
In particular, for every $\ell\in\Nb$,
\begin{align*}
s_E (F,n) &= \inf_{\delta > 0} \limsup_{d\to\infty} \frac{1}{\ell d\log\ell d}\log |\SA(F,n,\delta,\ell d)|_E , \\
\slower_E (F,n) &= \inf_{\delta > 0} \liminf_{d\to\infty} \frac{1}{\ell d\log\ell d}\log |\SA(F,n,\delta,\ell d)|_E .
\end{align*}
\end{lemma}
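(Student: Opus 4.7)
The plan is to verify both identities via matching inequalities; the ``in particular'' case then follows by taking $d_k = \ell k$, for which $d_{k+1}/d_k = (k+1)/k \to 1$. In each identity, one direction is immediate: the $\limsup$ (respectively $\liminf$) along the subsequence $(d_k)$ is at most (respectively at least) the $\limsup$ (respectively $\liminf$) along all $d$, so
\[
\inf_{\delta > 0}\limsup_{k\to\infty}\frac{\log|\SA(F,n,\delta,d_k)|_E}{d_k\log d_k}\le s_E(F,n)
\]
and $\slower_E(F,n)\le\inf_{\delta>0}\liminf_{k\to\infty}\frac{\log|\SA(F,n,\delta,d_k)|_E}{d_k\log d_k}$.

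The reverse inequalities come from a zero padding construction. Fix $\delta > \delta' > 0$. Given $d$ with $d_k \le d < d_{k+1}$ and $\varphi \in \SA(F,n,\delta',d)$, I would define $\tilde\varphi : [I_\sG] \to M_{d_{k+1}}$ by $\tilde\varphi(x) = \varphi(x) \oplus 0_{(d_{k+1}-d)\times(d_{k+1}-d)}$. Then $\tilde\varphi$ sends partial permutation matrices to partial permutation matrices; the identity $\|y \oplus 0\|_{2,d_{k+1}} = \sqrt{d/d_{k+1}}\,\|y\|_{2,d}$ shows that the approximate multiplicativity estimate carries over with the same $\delta'$; and the trace defect satisfies
\[
|\tr_{d_{k+1}}\tilde\varphi(s) - \tau(s)| \le (d/d_{k+1})\delta' + (1 - d/d_{k+1})|\tau(s)|.
\]
Since $d_{k+1}/d \le d_{k+1}/d_k \to 1$, for all sufficiently large $k$ the right-hand side is less than $\delta$, so $\tilde\varphi \in \SA(F,n,\delta,d_{k+1})$. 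The top-left $d \times d$ block of $\tilde\varphi(s)$ recovers $\varphi(s)$, so the map $\varphi|_E \mapsto \tilde\varphi|_E$ is injective, giving
\[
|\SA(F,n,\delta',d)|_E \le |\SA(F,n,\delta,d_{k+1})|_E
\]
for all such $d$ once $k$ is large.

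Taking logarithms, dividing by $d\log d$, and observing that $d_{k+1}\log d_{k+1}/(d\log d) \to 1$ as $d \to \infty$, I would pass to $\limsup_{d\to\infty}$ to obtain $s_E(F,n,\delta') \le \limsup_{k\to\infty}\frac{\log|\SA(F,n,\delta,d_k)|_E}{d_k\log d_k}$, and since $s_E(F,n,\delta')$ is monotone in $\delta'$, letting $\delta' \to 0$ and then $\inf_\delta$ completes the first equality. For $\slower_E$, I would extract a subsequence $k_j$ along which $\frac{\log|\SA(F,n,\delta,d_{k_j})|_E}{d_{k_j}\log d_{k_j}}$ approaches $\liminf_k$, evaluate the displayed inequality at $d = d_{k_j - 1}$, and conclude that $\liminf_d\frac{\log|\SA(F,n,\delta',d)|_E}{d\log d} \le \liminf_k\frac{\log|\SA(F,n,\delta,d_k)|_E}{d_k\log d_k}$, giving the matching inequality after taking infima. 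The edge case in which $\SA(F,n,\delta,d_k)$ is eventually empty is consistent with the $-\infty$ convention, because the injective padding then forces $\SA(F,n,\delta',d)$ to be empty for arbitrarily large $d$. The only point requiring care is the bookkeeping between $\delta$ and $\delta'$; no deeper obstacle arises.
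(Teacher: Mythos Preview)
Your zero-padding construction is exactly the inclusion $I_{d_1}\subseteq I_{d_2}$ the paper uses, and your treatment of the $\limsup$ identity is correct: padding $d\to d_{k+1}$ gives $|\SA(F,n,\delta',d)|_E\le |\SA(F,n,\delta,d_{k+1})|_E$, hence $s_E(F,n,\delta')\le\limsup_k\frac{1}{d_k\log d_k}\log|\SA(F,n,\delta,d_k)|_E$, which after infima yields the nontrivial inequality $s_E(F,n)\le\inf_\delta\limsup_k(\cdots)$.

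There is, however, a genuine gap in your $\liminf$ argument. The inequality you derive by evaluating at $d=d_{k_j-1}$ is
\[
\liminf_{d\to\infty}\frac{\log|\SA(F,n,\delta',d)|_E}{d\log d}\;\le\;\liminf_{k\to\infty}\frac{\log|\SA(F,n,\delta,d_k)|_E}{d_k\log d_k},
\]
and after taking infima this says $\slower_E(F,n)\le\inf_\delta\liminf_k(\cdots)$. But that is precisely the trivial direction you already recorded at the start (liminf along a subsequence dominates the full liminf). What you actually need is the reverse: $\inf_\delta\liminf_k(\cdots)\le\slower_E(F,n)$.

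The fix is to run the padding in the other direction as well, from $d_k$ up to an arbitrary $d\in[d_k,d_{k+1})$, obtaining $|\SA(F,n,\delta',d_k)|_E\le|\SA(F,n,\delta,d)|_E$. This gives, for every $d$,
\[
\frac{\log|\SA(F,n,\delta',d_{k(d)})|_E}{d_{k(d)}\log d_{k(d)}}\;\le\;\frac{d\log d}{d_{k(d)}\log d_{k(d)}}\cdot\frac{\log|\SA(F,n,\delta,d)|_E}{d\log d},
\]
and since $k(d)$ visits every large $k$ while the ratio tends to $1$, taking $\liminf_d$ yields $\liminf_k(\cdots,\delta',\cdots)\le\slower_E(F,n,\delta)$; now the infima finish the job. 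The paper in fact packages both paddings into a single two-sided sandwich
\[
\frac{\lambda^{-1}}{d_k\log d_k}\log|\SA(F,n,\delta-\eta,d_k)|_E\le\frac{1}{d\log d}\log|\SA(F,n,\delta,d)|_E\le\frac{\lambda}{d_{k+1}\log d_{k+1}}\log|\SA(F,n,\delta+\eta,d_{k+1})|_E,
\]
from which both identities drop out simultaneously. Your $\limsup$ half used only the right inequality; the $\liminf$ half needs the left one.
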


\begin{proof}
For integers $1\le d_1\le d_2$, we have the inclusion $I_{d_1}\subseteq I_{d_2}$,
as partial transformations of $\{1,\ldots,d_1\}$ may be viewed as partial transformations of $\{1,\ldots,d_2\}$
which fix the points from $d_1 + 1$ to $d_2$.
It is easily seen that this results in an inclusion $\SA(F,n,\delta,d_1)\subseteq \SA(F,n,\delta',d_2)$,
where 
$\delta'=\delta'(d_1,d_2)= \delta + \sqrt{(d_2-d_1 )/d_2}$.
Thus, $\delta'\to\delta$ if $d_1$ and $d_2$ are increasing without bound in such a way that $d_2/d_1\to1$.
Moreover, if $r=d_2/d_1$ then
\[
\frac{d_2\log d_2}{d_1\log d_1}=r\frac{\log d_1 +\log r}{\log d_1},
\] 
so also this ratio tends to $1$.
This implies that for every $\eta>0$ and $\lambda > 1$ there is a $k_0 \in\Nb$ such that for every integer $k\geq k_0$ 
and integer $d$ with $d_k \le d\le d_{k+1}$ we have
\begin{align*}
\frac{\lambda^{-1}}{d_k\log d_k}\log |\SA(F,n,\delta-\eta,d_k)|_E
&\le\frac{1}{d\log d}\log |\SA(F,n,\delta,d)|_E \\
&\le\frac{\lambda}{d_{k+1}\log d_{k+1}}\log |\SA(F,n,\delta+\eta,d_{k+1})|_E
\end{align*}
and the lemma follows from this.
\end{proof}

A p.m.p.\ groupoid $\sG$ is said to have infinite classes if $\so^{-1} (\{ x \} )$
is infinite (equivalently, $\ra^{-1} (\{ x \} )$ is infinite) for $\mu$-almost every $x\in\sG^0$.

\begin{proposition}\label{P-inf gd}
Suppose that the p.m.p.\ groupoid $\sG$ is sofic and has infinite classes. Then $\slower (\sG ) \geq 1$. 
\end{proposition}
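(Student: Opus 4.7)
The plan is to show $\slower(\sG) \geq 1 - \kappa$ for every $\kappa > 0$ and then let $\kappa \to 0$. The key mechanism is conjugation: for any $\varphi \in \SA(F, n, \delta, d)$ and $\pi \in S_d$, the map $\varphi^\pi(t) := \pi \varphi(t) \pi^{-1}$ is again in $\SA(F, n, \delta, d)$, because conjugation by a permutation preserves the $2$-norm (hence approximate multiplicativity), preserves the trace, and maps $I_d$ into itself. Two conjugates $\varphi^\pi, \varphi^{\pi'}$ agree on a finite set $E \subseteq I_\sG$ precisely when $\pi^{-1}\pi'$ lies in the common centralizer $\bigcap_{s \in E} C_{S_d}(\varphi(s))$, so any single sofic approximation $\varphi$ produces at least $d! / |\bigcap_{s \in E} C_{S_d}(\varphi(s))|$ distinct restrictions to $E$. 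The goal is therefore to choose $E$ that forces this centralizer to have size $d^{o(d \log d)}$, turning $d! \sim d^d$ into the desired exponential lower bound.

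Given $\kappa > 0$, I would pick $K \geq 4/\kappa$ and use the infinite-classes hypothesis to construct an element $s \in I_\sG$ with $s^* s = s s^* = 1_{\sG^0}$ (a full Borel bisection of $\sG^0$ realised inside $\sG$) and $\tau(s^k) = 0$ for $k = 1, \dots, K$. The existence of such an $s$ amounts to a Rokhlin / Borel matching lemma for the groupoid: the graph of the bisection must be a Borel subset $B \subseteq \sG$ on which both $\so$ and $\ra$ are bijections onto $\sG^0$ and whose associated bijection has no periodic points of period $\leq K$ on a set of full measure. Set $E = \{s\}$ and fix any finite $F \supseteq \{s\}$, integer $n \geq K$, and sufficiently small $\delta > 0$. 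Soficity supplies some $\varphi_0 \in \SA(F, n, \delta, d)$ for an unbounded set of $d$, and a standard orthogonal-sum amplification fills in all large $d$. The conjugation argument then yields $|\SA(F, n, \delta, d)|_E \geq d! / |C_{S_d}(\varphi_0(s))|$.

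To bound $|C_{S_d}(\varphi_0(s))|$, combining $|\tr \circ \varphi_0(s^k) - \tau(s^k)| < \delta$ with $\|\varphi_0(s^k) - \varphi_0(s)^k\|_2 < \delta$ gives $|\tr(\varphi_0(s)^k)| < 2\delta$ for $k = 1, \dots, K$ and $\tr(\varphi_0(s)^* \varphi_0(s)) > 1 - 2\delta$. Writing $\varphi_0(s) \in I_d$ as a partial permutation with $c_\ell$ many $\ell$-cycles, the identity $\tr(\varphi_0(s)^k) = (1/d)\sum_{\ell \mid k} \ell c_\ell$ forces $c_k \leq 2\delta d / k$ for each $k \leq K$, while the domain and range of $\varphi_0(s)$ each have complement of size $O(\delta d)$. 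Decomposing the centralizer over long cycles (at most $d/K$ in number), short cycles (contributing $O(K\delta d)$ points), chains (at most $O(\delta d)$), and orphans (at most $O(\delta d)$), and applying the standard formula $\prod_\ell \ell^{c_\ell} c_\ell!$ together with routine factorial bounds, I get $\log|C_{S_d}(\varphi_0(s))| \leq (2/K + O(K\delta))\, d \log d \leq \kappa d \log d$ for $\delta$ small in terms of $K$ and $\kappa$. Since $\log d! = d \log d - d + O(\log d)$, this gives $\slower_E(F, n, \delta) \geq 1 - \kappa$; taking the infima over $F, n, \delta$ yields $\slower(\sG) \geq 1 - \kappa$, and $\kappa \to 0$ finishes the proof. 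The principal technical obstacle is the construction of $s$ in the first step: while for equivalence relations with infinite classes this is routine (lift any aperiodic element of the full group), rigorously producing such a bisection inside an abstract p.m.p. groupoid requires a Borel Hall / matching argument and is the only place where the infinite-classes hypothesis genuinely enters the proof.
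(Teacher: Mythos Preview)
Your conjugation-and-centralizer framework is sound, and the cycle-structure estimate on $|C_{S_d}(\varphi_0(s))|$ is correct once the element $s$ is in hand. The genuine gap is precisely the step you flag as the ``principal technical obstacle'': the full bisection $s$ with $s^*s=ss^*=1_{\sG^0}$ and $\tau(s^k)=0$ for $k\le K$ need not exist. Take $\sG$ to be the group $G=\bigoplus_{n\ge 1}\mathbb{Z}/2\mathbb{Z}$ viewed as a one-object groupoid: it is countable, amenable (hence sofic), and has infinite classes, yet every nonzero $s\in I_G=G\cup\{0\}$ satisfies $s^2=e$, so $\tau(s^2)=1$. More generally, whenever the isotropy groups of $\sG$ have uniformly bounded exponent, no single bisection can have $\tau(s^k)=0$ for all $k\le K$ once $K$ exceeds that exponent. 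Your description of the obstruction in terms of the ``associated bijection of $\sG^0$ having no periodic points'' only sees the principal (equivalence-relation) part of the groupoid; for a bundle of groups that bijection is always the identity, and the real condition $\tau(s^k)=0$ is about the groupoid element over each base point being a non-unit, which is exactly what torsion isotropy obstructs.

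The paper's proof sidesteps this by abandoning the single-element approach. Instead of one $s$ with large order, it chooses finitely many partial isometries $s_1,\dots,s_k\in I_\sG$ (supported off $\sG^0$, with $\tau(s_i)=0$) so that $|\so^{-1}(x)\cap\bigcup_i B_i|\ge m$ for almost every $x$---this uses only that each class is infinite, not any order hypothesis. It then looks at the decomposition of $\{1,\dots,\ell\}$ into minimal sets invariant under all $\varphi(s_i)$; the pieces of size $\ge m$ cover almost everything, and the common centralizer you would compute is governed by permutations of these pieces, giving the same $1-1/m$ asymptotics you obtained from long cycles. Your argument is essentially the $k=1$ special case of this, and that specialization is exactly what fails in the presence of torsion. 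If you replace your single $s$ by a finite family producing orbits of size $\ge m$ and bound the common centralizer instead, your proof goes through and becomes equivalent to the paper's.
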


\begin{proof}
Let $m$ and $n$ be integers greater than $1$ and let $0 < \varepsilon < 1/2$. 
Since $\sG$ has infinite classes, the sets 
$\so^{-1} (x) \cap (\sG\setminus\sG^0 )$ and $\ra^{-1} (x)\cap (\sG\setminus\sG^0 )$ 
are countably infinite for $\mu$-almost every $x\in\sG^0$. By a standard selection theorem 
\cite[Thm.\ 18.10]{Kec95}, as used in the proof of Theorem~1 in \cite{FelMoo77} 
in the equivalence relation setting,
there exist a countable Borel partition of $\sG$ into sets on which the
range and source maps are injective.
Thus we can find disjoint Borel sets $B_1 , \dots , B_k \subseteq \sG\setminus\sG^0$ and a Borel set
$Y\subseteq \sG^0$ with $\mu (Y) \geq 1-\varepsilon /2$ such that $\so |_{B_i}$ and $\ra |_{B_i}$ are injective
for every $i=1,\dots ,k$ and $|\so^{-1} (x) \cap \bigcup_{i=1}^k B_i | \geq m$ for every $x\in Y$.
For each $i=1,\dots ,k$ write $s_i$ for the element of $I_\sG$ defined by the characteristic function of $B_i$.
Note that $\tau (s_i ) = 0$ for every $i=1,\dots ,k$. Set $E = \{ s_1 , \dots ,s_k , 1_Y \}$. 
Take a finite set $F\subseteq I_\sG$ with $1_Y \in F^* = F$, an $n\in\Nb$, and a $\delta > 0$ such that 
\[
\slower_E (I_\sG ) + \eps \geq \liminf_{d\to\infty} \frac{1}{d\log d}\log |\SA (F,n,\delta ,d)|_E .
\]
By shrinking $\delta$ if necessary we may assume that it is sufficiently small as a function of 
$\varepsilon$, $m$, and $k$ for a purpose to be described in a moment.

Since $\sG$ is sofic we can find an $\ell\in\Nb$ and an
$(F,n,\delta )$-approximately multiplicative linear map
$\varphi : [I_\sG ]\to [I_\ell ]$ such that $\varphi (F^{\leq n})\subseteq I_\ell$ and
$|\tr_\ell \circ\varphi (s) - \tau (s) | < \delta$ for all $s\in F^{\leq n}$.
Since $\tau (s_i ) = 0$ for every $i=1,\dots ,k$,
$\tau (s_j^* s_i ) = 0$ for all distinct $i,j\in \{ 1,\dots k\}$, and $1_Y \in F$, 
by a straightforward approximation argument
we can find, assuming $\delta$ to be small enough as a function of $\varepsilon$, $m$, and $k$, 
a set $C\subseteq\{ 1,\dots ,\ell \}$ with $\tr_\ell (1_C ) > 1 - \varepsilon$ such that 
\begin{enumerate}
\item $\sum_{i=1}^k \varphi (s_i^* s_i )1_C \geq m\cdot 1_C$ in $M_\ell$,

\item $\tr_\ell (\varphi (s_i )1_C ) = 0$ for all $i=1,\dots ,k$,

\item $\tr_\ell (\varphi (s_j^* s_i )1_C ) = 0$ for all distinct $i,j\in \{ 1,\dots k\}$, and

\item $\varphi (s_j )^* \varphi (s_i )c = \varphi (s_j ^* s_i )c$ for all $i,j\in \{ 1,\dots ,k \}$ and $c\in C$.
\end{enumerate}

Decompose $\{ 1,\dots ,\ell \}$ into subsets which are invariant under 
$\varphi (s_i )$ for every $i=1,\dots ,k$ and are minimal 
with respect to this property. Write $A_1 , \dots ,A_q$ for the members of this
collection which have cardinality at least $m$.
We claim that $C\subseteq\bigcup_{i=1}^q A_i$. To verify this, 
let $c\in C$ and write $I$ for the set of all $i\in \{ 1,\dots ,k \}$
such that the domain of the partial transformation $\varphi (s_i )$ contains $c$. By conditions (1) and (4) above,
the set $I$ has cardinality at least $m$. Now suppose that $\varphi (s_i )c = \varphi (s_j )c$ 
for some $i,j\in I$. Then $\varphi (s_j^* s_i )c = \varphi (s_j )^* \varphi (s_i )c = \varphi (s_j )^* \varphi (s_j )c = c$.
It follows that $i=j$, for otherwise $\tr_d (\varphi (s_j^* s_i )1_C ) > 0$, contradicting (3). 
We thereby deduce that $C\subseteq\bigcup_{i=1}^q A_i$.

Now let $d\in\Nb$. For each $j=0,\dots ,d - 1$ define the bijection 
$\gamma_j : \{ 1,\dots ,\ell \} \to \{ j\ell +1 ,j\ell +2 ,\dots ,j\ell +\ell \}$ by $\gamma_j (c) = j\ell +c$.
Define a map $\psi : I_\sG \to I_{\ell d}$ by
\[
\psi (s) (j\ell +c) = \gamma_j \circ\varphi (s) \circ\gamma_j^{-1}  (j\ell +c) 
\]
for $s\in G$, $j=0,\dots ,d - 1$, and $c=1,\dots ,\ell$.  
Then $\psi$ is an $(F,n,\delta )$-approximately multiplicative map
such that $| \tr_{\ell d}\circ\psi (s) - \tau (s) | < \delta$ for all $s\in F^{\leq n}$.
For each $j=0,\dots ,d - 1$ and $i=1,\dots ,q$ write $A_{j,i}$ for the subset $\gamma_j (A_{i} )$ of
$\{ 1,\dots ,\ell d \}$. Set $A = \bigcup_{j=0}^{d - 1} \bigcup_{i=1}^q A_{j,i}$.

Write $n_i$ for the cardinality of $A_i$.
Note that the number of ways of partitioning $A$ into $\ell k$ many subsets with cardinalities $|A_{j,i} |$ for 
$i=1,\dots ,q$ and $j=0,\dots ,d - 1$ is bounded below by
\[
\frac{|A|!}{n_1 !^d \cdots n_q !^d (dq)!}
\]
(the factor $(dq)!$ in the denominator accounts for the possible repetition of cardinalities among the subsets, 
yielding the exact formula in the extreme case that all of the subsets have the same cardinality).
For each one of these partitions choose a permutation of $\{ 1,\dots ,\ell d \}$ which sends
each partition element to one of the $A_{j,i}$ with the same cardinality. Write $\cS$ for the collection
of these permutations. Then the conjugates of $\psi$ by the permutations in $\cS$, when restricted to $E$,
are pairwise distinct by construction. It follows using Lemma~\ref{L-mult} and Stirling's approximation that
\begin{align*}
\slower (\sG ) + \eps \geq \slower_E (I_\sG ) + \eps
&\geq \liminf_{d\to\infty} \frac{1}{\ell d\log (\ell d)}\log |\SA (F,n,\delta ,\ell d)|_E \\
&\geq \liminf_{d\to\infty} 
\frac{1}{\ell d\log (\ell d)} \log \bigg( \frac{|A|!}{n_1 !^d \cdots n_q !^d (dq)!} \bigg) \\
&\geq \liminf_{d\to\infty} \frac{1}{\ell d\log (\ell d)} \log \bigg( 
\frac{((1-\varepsilon )\ell d)^{(1-\varepsilon )\ell d}}{n_1^{d n_1} \cdots n_q^{d n_q}  (dq)^{dq}} \bigg) \\
&\geq \liminf_{d\to\infty} \bigg[ 1-\varepsilon 
- \frac{\sum_{i=1}^q n_i \log n_i}{\ell\log (\ell d)} - \frac{q\log (dq)}{\ell\log (\ell d)} \bigg] \\
&= 1 - \varepsilon - \frac{q}{\ell} \\
&\geq 1 - \varepsilon - \frac{1}{m} .
\end{align*}
Since $\varepsilon$ was an arbitrary positive number
and $m$ an arbitrary integer greater than $1$, we conclude that $\slower (\sG ) \geq 1$.
\end{proof}

\begin{proposition}\label{P-upper bound}
Let $F$ be a finite subset of $\sG$. Then $s(F) \leq |F|$.
\end{proposition}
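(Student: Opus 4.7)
The plan is to bound the number of distinct restrictions $\varphi|_E$ directly, using the fact that each $\varphi(s)$ for $s\in E$ must lie in the finite set $I_d$, and then count how large $I_d$ is. No soficity or approximate multiplicativity is actually required; this is a crude combinatorial upper bound that reflects the fact that $E$ has at most $|F|$ elements.

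First I would observe that for any $\varphi \in \SA(F,n,\delta,d)$ and any $s \in E \subseteq F$, the requirement in the definition of $\SA(F,n,\delta,d)$ forces $\varphi(s) \in I_d$. Hence the restriction map $\varphi \mapsto \varphi|_E$ takes values in $I_d^{E}$, so that
\[
|\SA(F,n,\delta,d)|_E \;\leq\; |I_d|^{|E|}.
\]

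Next, I would bound $|I_d|$. A partial transformation of $\{1,\dots,d\}$ is uniquely specified by a function from $\{1,\dots,d\}$ to $\{0,1,\dots,d\}$ (where $0$ encodes ``undefined'') which is injective on the preimage of $\{1,\dots,d\}$. This gives the crude estimate $|I_d| \leq (d+1)^d$. Consequently,
\[
\frac{1}{d\log d} \log |\SA(F,n,\delta,d)|_E
\;\leq\; \frac{|E|\log(d+1)}{\log d},
\]
and the right-hand side tends to $|E|$ as $d\to\infty$. Therefore $s_E(F,n,\delta) \leq |E|$ for every choice of $n$ and $\delta$, which yields $s_E(F) \leq |E| \leq |F|$. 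Taking the supremum over finite subsets $E$ of $F$, we conclude $s(F) \leq |F|$.

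There is no real obstacle here: the entire argument is a one-line counting estimate together with the elementary bound $|I_d| \leq (d+1)^d$. The only minor point worth checking is that the restriction map factors through $I_d^E$, but this is immediate from the definition of $\SA$.
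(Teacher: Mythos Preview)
Your proof is correct and follows essentially the same approach as the paper: both bound $|\SA(F,n,\delta,d)|_E$ by $|I_d|^{|E|}$ (the paper takes $E=F$) and then estimate $|I_d|$. Your bound $|I_d|\leq (d+1)^d$ is in fact cleaner than the paper's route through the exact count $\sum_{k=0}^d \binom{d}{k}^2 k! \leq \binom{2d}{d}d!$ and Stirling's approximation, but the underlying idea is identical.
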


\begin{proof}
For every $n\in\Nb$, $\delta > 0$, and $d\in\Nb$ the number of restrictions $\sigma |_F$ where 
$\sigma\in\SA (F,\delta ,n,d)$ is at most $\big(\sum_{k=0}^d \binom{d}{k}^2 k!\big)^{|F|}$, which is bounded
above by $\big(\binom{2d}{d} d!\big)^{|F|}$, which for a given $\eps > 0$ is less
than $d^{(1+\varepsilon )|F|d}$ for all sufficiently large $d$ by Stirling's approximation, 
giving the result.
\end{proof}

Proposition~\ref{P-upper bound} immediately implies the following.

\begin{proposition}\label{P-min gen}
The quantity $s(\sG )$ is bounded above by the smallest cardinality of a set of generators for $\sG$.
\end{proposition}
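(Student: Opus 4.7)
The plan is to combine the two preceding results, which gives essentially a one-line proof. If $\sG$ admits no finite generating set, then the smallest cardinality of a generating set is $\infty$ and there is nothing to prove. So assume $\Omega\subseteq I_\sG$ is a finite generating set with $|\Omega| = n$.

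By Theorem~\ref{T-generating}, which says that the sofic dimension can be computed on any generating set, we have $s(\sG) = s(\Omega)$. Since $\Omega$ is finite, the quantity $s(\Omega) = \sup_E \inf_F s_E(F)$ (with $E,F$ ranging over finite subsets of $\Omega$) is simply $s_\Omega(\Omega)$: indeed the supremum over $E\subseteq \Omega$ is attained at $E = \Omega$ because restricting to a larger set can only increase the number of distinct restrictions, and the infimum over $F\subseteq \Omega$ is attained at $F = \Omega$ by the monotonicity remark following Definition~\ref{D-groupoid local}. Now Proposition~\ref{P-upper bound} applied to $F = \Omega$ yields $s(\Omega) \leq |\Omega| = n$, and combining with the equality from Theorem~\ref{T-generating} gives $s(\sG) \leq n$, as required.

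There is no real obstacle here; the only point to verify is the compatibility of the two notations $s(\Omega)$ used in Proposition~\ref{P-upper bound} and Theorem~\ref{T-generating}, which as indicated above follows from the monotonicity properties of $s_E(F)$ in $E$ and $F$.
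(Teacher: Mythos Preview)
Your proof is correct and is essentially the same as the paper's, which simply states that Proposition~\ref{P-upper bound} immediately implies the result; you have just made explicit the (necessary) appeal to Theorem~\ref{T-generating} that the paper leaves implicit. The extra paragraph verifying that $s(\Omega)=s_\Omega(\Omega)$ for finite $\Omega$ is correct but not really needed, since Proposition~\ref{P-upper bound} already asserts $s(F)\le |F|$ in the notation of Definition~\ref{D-groupoid local}.
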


\section{Groups}\label{S-groups}

Throughout this section $G$ is a countable discrete group. In this case $I_G$ can be identified with $G$
along with the zero element.
We will simply record here some basic facts, and then discuss amalgamated free products
and amenability in the next section.

For the purpose of formulating sofic dimension in the case of groups it is equivalent and 
technically more convenient to work with maps into $S_d$ instead of $I_d$, so that the sofic models
for group elements are full permutation matrices. We will also write $\sigma_s$ instead of
$\sigma (s)$ for the image of an element $s\in G$ under a map $\sigma : G \to S_d$.
Given a finite set $F\subseteq G$,
$n,d\in\Nb$, and a $\delta > 0$, we write $\GA (F,n,\delta ,d)$ for the set of all
identity-preserving maps $\sigma : G \to S_d$ such that 
\begin{enumerate}
\item $\| \sigma_{s_1 ,\dots ,s_n} - \sigma_{s_1} \cdots\sigma_{s_n} \|_2 < \delta$ 
for all $(s_1 , \dots s_n )\in (F\cup F^* \cup \{ e \} )^{\times n}$, and

\item $\tr_d (\sigma_s ) < \delta$ for all $s\in (F\cup F^* \cup \{ e \} )^n \setminus \{ e \}$,
\end{enumerate}
For a finite set $E\subseteq G$ we write $|\GA (F,n,\delta ,d)|_E$ for the cardinality of $\GA (F,n,\delta ,d)$
modulo equality on $E$, i.e., the cardinality of the set of restrictions $\sigma |_E$ where 
$\sigma\in\GA (F,n,\delta ,d)$. By a straightforward argument that uses Lemma~\ref{L-nbhd gd} to handle
the problem that the images of a group element under maps in $\SA (F,n,\delta ,d)$ need not have full domain
and that also requires perturbing maps in $\SA (F,n,\delta ,d)$ so as to be identity-preserving,
one can readily verify in the case $E\subseteq F$ that 
\[
s_E (F) = \inf_{n\in\Nb} \inf_{\delta > 0} \limsup_{d\to\infty} \frac{1}{d\log d} \log |\GA (F,n,\delta ,d)|_E
\]
and
\[
\slower_E (F) = \inf_{n\in\Nb} \inf_{\delta > 0} \liminf_{d\to\infty} \frac{1}{d\log d} \log |\GA (F,n,\delta ,d)|_E .
\]

The following are special cases of Propositions~\ref{P-inf gd} and \ref{P-min gen}, respectively.

\begin{proposition}\label{P-sofic inf group}
If $G$ is sofic and infinite then $\slower (G) \geq 1$. 
\end{proposition}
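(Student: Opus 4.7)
The plan is to deduce this directly from Proposition~\ref{P-inf gd} by viewing the countable discrete group $G$ as a p.m.p.\ groupoid, namely the groupoid whose unit space $\sG^0 = \{ * \}$ is a single point carrying the Dirac probability measure, with $\so$ and $\ra$ both the constant map to $*$. Under this identification, $I_\sG$ coincides with $G\cup\{0\}$ in the manner already pointed out in item~(1) of the list of basic examples in Section~\ref{S-gd}, so $s(G)$ and $\slower(G)$ as defined for groupoids agree with the quantities used throughout Section~\ref{S-groups}.

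First I would check that the hypothesis ``infinite classes'' for the groupoid $\sG$ associated to $G$ reduces precisely to $G$ being infinite as a group: because $\sG^0$ is a single point $*$, we have $\so^{-1}(\{*\}) = \ra^{-1}(\{*\}) = G$, and infiniteness of this set is exactly the assumption on $G$. Next I would verify the soficity compatibility: a sofic approximation of $G$ as a group in the sense of the $\GA(F,n,\delta,d)$ formulation (just before the proposition) produces, by composition with the embedding $S_d\hookrightarrow I_d$, elements of $\SA(F,n,\delta,d)$ for the associated groupoid, so soficity in the group sense implies soficity in the p.m.p.\ groupoid sense. (In fact the equivalence of the two formulations is stated as the identity $s_E(F) = \inf_n\inf_\delta \limsup_d (d\log d)^{-1}\log|\GA(F,n,\delta,d)|_E$ recorded just above, whose justification was indicated using Lemma~\ref{L-nbhd gd}.)

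With these two identifications in place, Proposition~\ref{P-inf gd} applied to the associated p.m.p.\ groupoid yields $\slower(\sG) \geq 1$, and hence $\slower(G)\geq 1$. There is essentially no obstacle here beyond bookkeeping: all the real content—counting conjugates of a sofic model by permutations that shuffle the minimal invariant blocks $A_1,\dots,A_q$ produced by the infinite-classes hypothesis—has already been done in the proof of Proposition~\ref{P-inf gd}, and nothing specific to the group setting is needed on top of it.
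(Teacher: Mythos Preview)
Your proposal is correct and is exactly the approach the paper takes: the paper simply states that Proposition~\ref{P-sofic inf group} is a special case of Proposition~\ref{P-inf gd}, and you have spelled out precisely why the group case satisfies the hypotheses (single-point unit space, so infinite classes $\Leftrightarrow$ $G$ infinite; group soficity $\Rightarrow$ groupoid soficity via $S_d\hookrightarrow I_d$).
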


\begin{proposition}
The quantity $s(G)$ is bounded above by the smallest cardinality of a set of generators for $G$.
\end{proposition}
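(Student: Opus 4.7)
The plan is very short: this proposition is essentially a specialization of Proposition~\ref{P-min gen} to groups, and it follows from combining Proposition~\ref{P-upper bound} with the generating-set invariance established in Theorem~\ref{T-generating}. First I would dispose of the trivial case: if $G$ admits no finite generating set, then the smallest cardinality of a generating set is infinite and the stated inequality holds vacuously. So I may assume that $G$ has a finite generating set $S$ of minimal cardinality.

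Next I would verify that $S$, viewed as a finite subset of $I_G = G \cup \{0\}$, is generating in the sense of Definition~\ref{D-gen}. Since $S$ generates $G$ as a group, the union $\bigcup_{n=1}^\infty (S \cup S^*)^n$ already equals $G \setminus \{e\}$ together with $e$ (using the convention that $s^* = s^{-1}$ in the group case), which is $2$-norm dense in (indeed equal to) $I_G \setminus \{0\}$. The orthogonal-complement condition in Definition~\ref{D-gen} is automatic here because the only projections in $I_G$ are $0$ and $1$, which are obtained from one another by orthogonal complementation inside $I(S^{\leq n})$ once $e \in S^{\leq n}$.

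With $S$ a finite generating subset of $I_G$, Theorem~\ref{T-generating} gives
\[
s(G) \;=\; s(I_G) \;=\; s(S),
\]
and Proposition~\ref{P-upper bound}, applied to the finite subset $F = S$, yields $s(S) \leq |S|$. Chaining these two inequalities gives $s(G) \leq |S|$, which is exactly the desired bound.

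There is no real obstacle here: the two main ingredients (Theorem~\ref{T-generating} and Proposition~\ref{P-upper bound}) have already been proved, and the only thing to check by hand is the mild verification that a finite group-theoretic generating set qualifies as generating in the sense of Definition~\ref{D-gen}, which is immediate from the triviality of the projection lattice of $I_G$ for a group.
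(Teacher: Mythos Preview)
Your proposal is correct and follows essentially the same route as the paper: the proposition is stated there as a special case of Proposition~\ref{P-min gen}, which in turn is noted to follow immediately from Proposition~\ref{P-upper bound} (with Theorem~\ref{T-generating} implicitly supplying $s(G)=s(S)$). The paper also remarks, right after Definition~\ref{D-gen}, that for groups the definition reduces to the usual notion of generating set, which is precisely the verification you spell out.
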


\begin{proposition}\label{P-finite index}
Let $H$ be a finite index subgroup of $G$. Then
\[
(s(H)-1) \leq [G:H] (s(G)-1) .
\]
\end{proposition}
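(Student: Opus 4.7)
The plan is to exploit induced representations: a sofic approximation of $H$ on $d$ points can be inflated to one of $G$ on $[G:H]d$ points, with enough extra combinatorial freedom to force the claimed inequality. Set $n=[G:H]$, fix coset representatives $g_1 = e, g_2, \ldots, g_n$ so that $G = \bigsqcup_{i=1}^n g_i H$, and for $g \in G$ and $i \in \{1,\ldots,n\}$ let $j(g,i) \in \{1,\ldots,n\}$ and $h_{g,i} \in H$ be uniquely determined by $gg_i = g_{j(g,i)} h_{g,i}$. Given a map $\sigma_H : H \to S_d$ and a tuple $b = (b_2, \ldots, b_n) \in \Sym(\{1,\ldots,d\})^{n-1}$, with the convention $b_1 = \id$, define $\sigma^{(b)} : G \to \Sym(\{1,\ldots,d\} \times \{g_1,\ldots,g_n\}) \cong S_{dn}$ by
\[
\sigma^{(b)}(g)(x, g_i) = \bigl( b_{j(g,i)}\, \sigma_H(h_{g,i})\, b_i^{-1} x ,\, g_{j(g,i)} \bigr) .
\]

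First I would verify, using the cocycle identities $j(g_1g_2, i) = j(g_1, j(g_2, i))$ and $h_{g_1g_2, i} = h_{g_1, j(g_2,i)} h_{g_2, i}$ (both immediate from associativity in $G$), that for every finite $F_G \subseteq G$, every integer $n_0' > 0$, and every $\delta' > 0$ one can choose a finite $F_H \subseteq H$, an $n_0 \in \Nb$, and a $\delta > 0$, depending only on these data and on the coset representatives, so that whenever $\sigma_H \in \GA(F_H, n_0, \delta, d)$ and $b \in \Sym(\{1,\ldots,d\})^{n-1}$, the map $\sigma^{(b)}$ lies in $\GA(F_G, n_0', \delta', dn)$. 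Approximate multiplicativity reduces fiberwise to that of $\sigma_H$, with the intervening $b_i^{-1} b_i$ factors cancelling; smallness of the traces follows from the identity
\[
\tr_{dn}\bigl( \sigma^{(b)}(g) \bigr) = \frac{1}{n} \sum_{\substack{1 \leq i \leq n \\ g \in g_i H g_i^{-1}}} \tr_d \bigl( \sigma_H(g_i^{-1} g g_i) \bigr)
\]
together with the fact that $g_i^{-1} g g_i \neq e$ whenever $g \neq e$.

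Next I would establish the key counting inequality. Fix a finite $E_H \subseteq H$ and set $E_G = E_H \cup \{g_2, \ldots, g_n\}$. From $h g_1 = g_1 h$ one reads off $j(h, 1) = 1$ and $h_{h, 1} = h$, hence $\sigma^{(b)}(h)(x, g_1) = (\sigma_H(h)x, g_1)$ for $h \in H$; and $g_i g_1 = g_i \cdot e$ gives $\sigma^{(b)}(g_i)(x, g_1) = (b_i x, g_i)$. Therefore $\sigma_H|_{E_H}$ is recovered from $\sigma^{(b)}|_{E_H}$ by restriction to the $g_1$-fiber, and each $b_i$ is recovered from $\sigma^{(b)}(g_i)$, so the map $(\sigma_H, b) \mapsto \sigma^{(b)}|_{E_G}$ is injective on $\GA(F_H, n_0, \delta, d) \times \Sym(\{1,\ldots,d\})^{n-1}$. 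This yields
\[
|\GA(F_G, n_0', \delta', dn)|_{E_G} \;\geq\; |\GA(F_H, n_0, \delta, d)|_{E_H} \cdot (d!)^{n-1} .
\]

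Finally, taking logarithms, applying Stirling to get $\log((d!)^{n-1}) = (n-1) d \log d + O(d)$, dividing by $dn \log(dn)$, and passing to $\limsup_{d \to \infty}$ along the subsequence $d_k = kn$ (for which $d_{k+1}/d_k \to 1$, so that by Lemma~\ref{L-mult} the resulting $\limsup$ coincides with $s_{E_G}(F_G, n_0', \delta')$), one obtains
\[
s_{E_G}(F_G, n_0', \delta') \;\geq\; \frac{1}{n}\, s_{E_H}(F_H, n_0, \delta) + \frac{n-1}{n} \;\geq\; \frac{1}{n}\, s_{E_H}(I_H) + \frac{n-1}{n} ,
\]
where the second inequality uses monotonicity of $s_{E_H}$ in its parameters. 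Taking infima over $\delta', n_0', F_G$ on the left gives $s_{E_G}(I_G) \geq \tfrac{1}{n} s_{E_H}(I_H) + \tfrac{n-1}{n}$, and then the supremum over $E_H$ (with $E_G = E_H \cup \{g_2, \ldots, g_n\}$ enlarged accordingly) produces $s(G) \geq \tfrac{1}{n} s(H) + \tfrac{n-1}{n}$, which rearranges to $s(H) - 1 \leq n(s(G) - 1) = [G:H](s(G)-1)$. The main technical hurdle will be the uniformity in $b$ of the parameters $F_H, n_0, \delta$ in the first step: one must show that a single tolerance on the $H$-side simultaneously certifies all $(d!)^{n-1}$ twisted inductions as sofic approximations of $G$.
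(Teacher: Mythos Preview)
Your argument is correct and follows the same induced-representation strategy as the paper: inflate an $H$-approximation on $d$ points to a $G$-approximation on $[G:H]\,d$ points and count the extra combinatorial freedom. The paper parametrizes that freedom via colorings of $\{1,\ldots,nd\}$ and global conjugation rather than your fiberwise twists $b_i$, but both produce the same $(n-1)/n$ contribution in the limit.

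One small correction: the map $(\sigma_H, b) \mapsto \sigma^{(b)}|_{E_G}$ is not literally injective on all of $\GA(F_H,n_0,\delta,d) \times \Sym(\{1,\ldots,d\})^{n-1}$, since from $\sigma^{(b)}|_{E_G}$ you only recover $\sigma_H$ at the finitely many points $h_{h,i}$, not on all of $H$. What your recovery argument actually shows---and what suffices---is that distinct pairs $(\sigma_H|_{E_H}, b)$ give rise to distinct restrictions $\sigma^{(b)}|_{E_G}$, which is exactly the inequality you state. Finally, the uniformity in $b$ you flag as the main hurdle is in fact automatic: the $b_i$ are permutations, so conjugation by them preserves every $2$-norm and trace estimate, and the cocycle cancellation $b_j^{-1}b_j = \id$ you already noted handles multiplicativity.
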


\begin{proof}
Set $m = [G:H]$. Take a set $R$ of representatives
for the left cosets of $H$ in $G$ with $e\in R$. 
Define a map $\beta : G\to R$ by declaring
$\beta (s)$ to be the unique element in $R\cap sH$ for every $s\in G$. Then,
given any $s\in G$, writing $\beta (s)(\beta (s)^{-1} s)$ gives a unique expression of $s$ 
as a product of an element in $R$ and an element of $H$. 

Suppose first that $s(H)$ is finite. Let $\kappa > 0$. 
Take a finite set $K\subseteq H$ such that $s_K (H) \geq s(H) - \kappa$.
Let $E$ be a finite subset of $G$ containing $R$ and $K$. 
Let $F$ be a finite symmetric subset of $G$ containing $R$, and let $\delta > 0$ and $n\in\Nb$. 
Set $L = H\cap R^{-1} FR$. 

Let $n\in\Nb$ and $\delta > 0$, and
let $d\in\Nb$. Let $\sigma\in\GA (L,n,\delta ,d)$. 
Define a map $\omega : G\to\Sym (\{ 1,\dots ,d \} \times R)$ by setting
\[
\omega_s (c,t) = (\sigma_{\beta (st)^{-1} st} (c),\beta (st)) 
\]
for all $s\in G$ and $(c,t) \in \{ 1,\dots ,d \} \times R$.
Now if $(s_1 , \dots , s_n )\in F^{\times n}$ and $(c,t) \in \{ 1,\dots ,d \} \times R$ then
\begin{align*}
\omega_{s_1 \cdots s_n} (c,t) = 
\big( \sigma_{\beta (s_1 \cdots s_n t)^{-1} s_1 \cdots s_n t}(c),
\beta (s_1 \cdots s_n t)\big) 
\end{align*}
and, using the fact that $\beta (r_1 \beta (r_2 )) = \beta (r_1 r_2 )$ for all $r_1 , r_2 \in G$, 
\begin{align*}
\omega_{s_1} \cdots\omega_{s_n} (c,t) = 
\bigg( \bigg( \prod_{i=1}^n \sigma_{\beta (s_i \cdots s_n t)^{-1} s_i \beta (s_{i+1} \cdots s_n t)} \bigg) (c),
\beta (s_1 \cdots s_n t) \bigg) .
\end{align*}
Now for every $t\in R$, the proportion of
$c\in \{ 1, \dots , d \}$ such that
\[
\sigma_{\beta (s_1 \cdots s_n t)^{-1} s_1 \cdots s_n t}(c) \neq
\bigg( \prod_{i=1}^n \sigma_{\beta (s_i \cdots s_n t)^{-1} s_i \beta (s_{i+1} \cdots s_n t)} \bigg) (c)
\]
is equal to $\| \sigma_{\beta (s_1 \cdots s_n t)^{-1} s_1 \cdots s_n t} -
\prod_{i=1}^n \sigma_{\beta (s_i \cdots s_n t)^{-1} s_i \beta (s_{i+1} \cdots s_n t)} \|_2^2$.
Since $\beta (s_i \cdots s_n t )^{-1} s_i \beta (s_{i+1} \cdots s_n t)$ is
an element of $L$ for each $i=1, \dots ,n$,
we infer that
$\| \omega_{s_1} \cdots\omega_{s_n} - \omega_{s_1 \cdots s_n} \|_2 < \delta$. 
If $s=e$, then $\omega_s$ is the identity permutation, as required.
If $s\in F\backslash\{e\}$, then for $t\in R$, either (i) $\beta(st)\ne t$, in which 
case $\omega_s(c,t)\ne(c,t)$ for every $c$, or (ii) $\beta(st)=t$, in which case $\beta(st)^{-1}st=t^{-1}st\ne e$,
and the proportion of $c$ for which $\omega_s(c,t)=(c,t)$ is less than $\delta$;
in either case, we have $|\tr(\omega_s)|<\delta$.
Therefore $\omega\in\GA (F,\delta ,n,md)$.

Note that $\omega_s(c,e)=(c,s)$ for every $s\in R$.
Write $\sP$ for the collection of all colorings of $\{ 1,\dots ,md\}$
into $d$ different colors $\{1,\ldots,d\}$, with exactly $m$ elements of each color.
Given $P\in\sP$ and $c\in\{1,\ldots,d\}$ write $P_c$ for the set of elements with color $c$
and choose
a bijection 
$\gamma_P : \{ 1,\dots ,d \} \times R \to \{ 1,\dots ,md \}$ such that 
$\gamma_P (\{(c,s):s\in R\}) = P_c$
for each $c=1,\dots ,d$.
Define $\sigma_P : G\to\Sym (md)$ by $s\mapsto \gamma_P\,\omega_s\gamma_P^{-1}$;
this is an element of $\GA (F,\delta ,n,md)$ since $\omega$ is. 

Having thus constructed a $\sigma_P \in\GA (F,\delta ,n,md)$ for every $\sigma\in\GA(L,\delta,n,d)$ and $P\in\sP$,
we observe that, given a $\rho\in\GA(F,\delta,n,md)$, 
if $W$ is a subset of $\GA(L,\delta,n,d)\times\sP$ such that the pairs 
$(\sigma |_K ,P)$ for $(\sigma ,P)\in W$ are all distinct and 
$\sigma_P |_E = \rho |_E$ for all $(\sigma ,P)\in W$, then $W$ has cardinality
at most $\frac{(md)!}{(md-d)!}$, since $R\subseteq E$. 
Indeed if $\sigma_P |_E$ and the $d$ values $x_c=\gamma_P(c,e)$ for $c=1,\ldots,d$ are specified, 
then the coloring $P$ is determined by $P_c=\{\sigma_{P ,s} (x_c ) :s\in R\}$.
Since $P$ is determined, we know $\gamma_P$ and recover $\sigma |_K$.
Thus, since $\frac{(md)!}{(md-d)!}\le(md)^d$,
\[
|\GA (F,\delta ,n,md)|_E \geq \frac{|\sP |}{(md)^d} |\GA (L,\delta ,n,d)|_K
=\frac{(md)!}{m!^d(md)^d} |\GA (L,\delta ,n,d)|_K .
\]
Therefore, employing Lemma~\ref{L-mult} and using Stirling's approximation,
\begin{align*}
s_E (F) &\geq \limsup_{d\to\infty} \frac{1}{md \log md} \log \frac{(md)!}{m!^d(md)^d} + \frac{s_K (L)}{m} \\
&\geq 1 - \frac{1}{m} + \frac{s(H)}{m} + \frac{\kappa}{m} .
\end{align*}
Taking an infimum over all finite sets $F\subseteq G$ and letting $\kappa\to 0$, we obtain
\[
s(G) \geq s_E (G) \geq 1 - \frac{1}{m} + \frac{s(H)}{m} ,
\]
yielding the desired conclusion. 

Observe finally that when $s(H) = \infty$ the above arguments show that $s(G) = \infty$.
\end{proof}

\begin{question}
When is the inequality in the above proposition an equality?
\end{question}

\begin{proposition}\label{P-finite}
Suppose that $G$ is finite. Then
\[
s(G) = 1 - \frac{1}{|G|} .
\]
\end{proposition}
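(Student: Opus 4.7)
Set $n = |G|$. By Lemma~\ref{L-mult} it suffices to work along the subsequence $d_\ell = \ell n$, $\ell\in\Nb$. For each such $d$ let $\cH_d$ denote the set of free actions $G \curvearrowright \{1,\ldots,d\}$; any such action decomposes into $\ell$ regular $G$-orbits whose joint $G$-set automorphism group is the wreath product $G\wr S_\ell$ of order $n^\ell \ell!$, so that $|\cH_d| = d!/(n^\ell \ell!)$. Stirling then gives
\[
\frac{1}{d\log d}\log|\cH_d|\longrightarrow 1 - \frac{1}{n}.
\]
The plan is to show that $|\cH_d|$ controls both the lower and upper asymptotics of the quantities appearing in the definition of $s(G)$.

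For the lower bound, each $\sigma\in\cH_d$ is a genuine homomorphism $G\to S_d$ with $\tr_d(\sigma_s) = 0 = \tau(s)$ for every $s\in G\setminus\{e\}$, hence lies in $\GA(F,N,\delta,d)$ for every finite $F\subseteq G$, $N\in\Nb$, and $\delta > 0$. Distinct free actions are distinct maps, so $|\GA(F,N,\delta,d)|_G\geq|\cH_d|$ and Lemma~\ref{L-mult} together with the Stirling estimate yields $s(G)\geq s_G(G)\geq 1-1/n$.

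The heart of the argument is the upper bound, which hinges on showing that every sufficiently good approximation is close to an element of $\cH_d$. Given $\kappa > 0$, choose $\varepsilon > 0$ via Lemma~\ref{L-ineq} so that $s_G(G,2)\leq s_{G,\varepsilon}(G,2)+\kappa$. For $\sigma\in\GA(G,2,\delta,d)$ define
\[
U = \bigl\{c\in\{1,\ldots,d\} : \sigma_{gh}(c) = \sigma_g\sigma_h(c)\text{ for all }g,h\in G,\ \sigma_s(c)\neq c\text{ for all }s\in G\setminus\{e\}\bigr\}.
\]
The defining conditions of $\GA(G,2,\delta,d)$ and a union bound give $|U|\geq(1-n^2\delta^2-n\delta)d$. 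A short calculation iterating $\sigma_{gh}(c) = \sigma_g(\sigma_h(c))$ for $c\in U$ shows that $U$ is $\sigma_g$-invariant for every $g$, decomposes into regular $G$-orbits of size $n$, and that $\sigma$ restricts to a genuine free action on $U$. Since $n$ divides both $d$ and $|U|$, one can extend $\sigma|_U$ to an element $\sigma'\in\cH_d$ by choosing any free action on the complement; then $\|\sigma_g-\sigma'_g\|_2^2\leq(d-|U|)/d\leq n^2\delta^2+n\delta$, which is less than $(\varepsilon/2)^2$ once $\delta$ is sufficiently small. Any $\varepsilon$-separated subset of $\GA(G,2,\delta,d)$ in $\rho_G$ therefore injects into $\cH_d$ via $\sigma\mapsto\sigma'$, giving $N_\varepsilon(\GA(G,2,\delta,d),\rho_G)\leq|\cH_d|$. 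Combined with the Stirling estimate and Lemma~\ref{L-mult}, this yields $s_{G,\varepsilon}(G,2)\leq 1-1/n$, whence $s(G)\leq s_G(G,2)\leq 1-1/n+\kappa$; letting $\kappa\to 0$ completes the proof.

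The main delicate point is the verification that $U$ is $\sigma_g$-invariant and that $\sigma|_U$ is a genuine free $G$-action. The mechanism is that if $c\in U$ and $c' = \sigma_g(c)$, then for any $h_1,h_2\in G$ both $\sigma_{h_1 h_2}(c')$ and $\sigma_{h_1}(\sigma_{h_2}(c'))$ can be rewritten as $\sigma_{h_1 h_2 g}(c)$ by repeated application of the relation defining $U$ at $c$, so they agree; an analogous cancellation at $c$ forces $\sigma_s(c')\neq c'$ for $s\neq e$, yielding $c'\in U$.
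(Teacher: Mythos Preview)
Your proof is correct and rests on the same core observation as the paper's---the set $U$ (the paper's $V_\sigma$) of points where $\sigma$ is genuinely multiplicative and free fills almost all of $\{1,\dots,d\}$ and breaks into regular $G$-orbits---but the two arguments package this differently in both directions.

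For the lower bound, the paper invokes Proposition~\ref{P-finite index} with $H=\{e\}$, which hides a nontrivial coset-coloring construction; you instead feed free actions directly into $\GA(F,N,\delta,d)$ and count $|\cH_d|$ by Stirling. This is more self-contained and avoids the finite-index machinery entirely. For the upper bound, the paper bounds $|\GA(G,n,\delta,d)|$ by a direct combinatorial enumeration (choose the good set, partition it into $m$-blocks, pick a transitive action on each, extend arbitrarily), whereas you perturb each $\sigma$ to a nearby genuine free action $\sigma'\in\cH_d$ and use the $\varepsilon$-separation framework of Lemma~\ref{L-ineq} to conclude that $N_\varepsilon$ is at most $|\cH_d|$. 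Your route makes the structural statement ``every good sofic approximation is close to an honest free action'' explicit, and your verification that $U$ is $\sigma_g$-invariant (by rewriting everything as evaluations at $c$) spells out a step the paper leaves to the reader.

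One minor technical point: Lemmas~\ref{L-mult} and~\ref{L-ineq} are stated for the $\SA$ quantities, and you apply them to $\GA$. The paper itself freely passes between the two formulations (see the opening of Section~\ref{S-groups}), and the proofs of both lemmas go through verbatim for $\GA$ since Lemma~\ref{L-nbhd gd} applies to all of $I_d$; just be aware that strictly speaking you are invoking a $\GA$-analogue rather than the lemmas as written.
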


\begin{proof}
Applying Proposition~\ref{P-finite index} with $H = \{ e \}$ we obtain $s(G) \geq 1 - |G|^{-1}$ since obviously
$s(\{ e \} ) = 0$. To complete the proof let us show that $s(G) \leq 1 - |G|^{-1}$. Set $m = |G|$. 
Let $0<\kappa <1$ be small and let $n\in\Nb$ and $\delta > 0$. Let $d\in\Nb$. 
It is readily seen that if $n\geq 2$ and $\delta$ is small enough as a function of $\kappa$ and $|G|$ then 
for every $\sigma\in\GA (G,\delta ,n,d)$ the set
\[
V_\sigma = \big\{ c\in\{1 ,\dots ,d\} :
\sigma_{st}(c)=\sigma_s(\sigma_t(c))\text{ for all } s,t\in G\text{ and }
\sigma_s (c) \neq c \text{ for all } s\in G\setminus\{ e \} \big\} .
\]
will have cardinality at least $(1-\kappa )d$. Observe that each of the sets $V_\sigma$ can be 
partitioned into $\sigma (G)$-invariant subsets of cardinality $m$, on each of which $\sigma$
yields a transitive action of $G$ (thus, a copy of $G$ acting on itself by left multiplication).
Let $q$ be the smallest multiple of $m$ which is no less than $(1-\kappa )d$.
The number of subsets of $\{ 1,\dots ,d \}$ of cardinality $q$ is at most $\binom{d}{\kappa d}$
and the number of ways of partitioning each such subset into subsets of cardinality $m$ is at most 
$q! / ((m!)^{q/m}(q/m)!)$
and the number of ways $G$ can act transitively on each of these sets is bounded above by $m!$.
Since $G$ can map to permutations on a set of cardinality at most $\kappa d$
in at most $((\kappa d)!)^m$ ways, we obtain
\begin{align*}
|\GA (G,n,\delta ,d)| \leq  \frac{q!}{(m!)^{q/m}(q/m)!}(m!)^{q/m} \binom{d}{\kappa d} ((\kappa d)!)^m .
\end{align*}
Using $(1-\kappa)d\le q\le(1-\kappa)d+m$ and applying Stirling's approximation,
\begin{align*}
s(G) = s_G (G) \leq s_G (G,n,\delta ) &= \limsup_{d\to\infty} \frac{1}{d\log d} \log |\GA (G,\delta ,n,d)|
\leq 1 - \frac{1}{m} + \kappa m .
\end{align*}
Since $\kappa$ was an arbitrary number in $(0,1)$ 
we conclude that $s(G)\leq 1 - 1/m$, as desired.
\end{proof}

\section{Free product groups with amalgamation over amenable subgroups}\label{S-amalg groups}

We begin by establishing an upper bound for the sofic dimension of amalgamated free products.
Recall that $S_d$ acts on the set of maps $\sigma : G\to S_d$ by 
$(\gamma\cdot\sigma )_s = \gamma\sigma_s \gamma^{-1}$.

\begin{lemma}\label{L-amalgamated upper}
Let $G_1$ and $G_2$ be countable discrete groups and
$H$ a common subgroup of $G_1$ and $G_2$. Then
\[
s(G_1 *_H G_2 ) \leq s(G_1 ) + s(G_2 ) - 1 + \frac{1}{|H|} .
\]
\end{lemma}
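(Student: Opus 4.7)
The plan is to exploit the universal property of the amalgamated free product: a homomorphism $\sigma : G := G_1 *_H G_2 \to S_d$ amounts to a pair of homomorphisms $\sigma^{(i)} : G_i \to S_d$ agreeing on $H$. Consequently every sofic approximation $\sigma$ of $G$ restricts to a matching pair of approximations $\sigma^{(i)} := \sigma|_{G_i}$, and I will bound the number of such pairs using the $S_d$-conjugation action to extract a denominator of order $d^{d(1-1/|H|)}$.

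For the setup, since $G_1 \cup G_2$ generates $G$, by Theorem~\ref{T-generating} it suffices to consider finite $E \subseteq G_1 \cup G_2$, so I set $E_i := E \cap G_i$. Choose finite symmetric $F_i \supseteq E_i$ in $G_i$ and a finite $H_0 \subseteq H \cap F_1 \cap F_2$, taking $H_0 = H$ when $|H| < \infty$. For $\sigma \in \GA(F_1 \cup F_2, n, \delta, d)$, the restrictions $\sigma^{(i)}$ lie in $\GA(F_i, n, \delta, d)$ and satisfy $\sigma^{(1)}|_{H_0} = \sigma^{(2)}|_{H_0} =: \tau$. Grouping by $\tau$,
\[
|\GA(F_1 \cup F_2, n, \delta, d)|_E \;\leq\; \sum_\tau N_1(\tau)\, N_2(\tau) \;\leq\; |\GA(F_1, n, \delta, d)|_{E_1} \cdot \max_\tau N_2(\tau),
\]
where $N_i(\tau) := |\{\sigma^{(i)}|_{E_i} : \sigma^{(i)} \in \GA(F_i, n, \delta, d),\ \sigma^{(i)}|_{H_0} = \tau\}|$.

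The key estimate is that $\max_\tau N_2(\tau)$ is small. Conjugation by $S_d$ preserves both the approximate-multiplicativity and trace conditions, hence $N_2(\gamma\tau\gamma^{-1}) = N_2(\tau)$, so summing over the $S_d$-orbit of a fixed $\tau_0$ gives $|\mathrm{orbit}(\tau_0)| \cdot N_2(\tau_0) \leq \sum_\tau N_2(\tau) \leq |\GA(F_2, n, \delta, d)|_{E_2 \cup H_0}$. For any $\tau_0$ that actually arises, the trace condition forces $\tau_0(H_0)$ to act nearly freely on $\{1,\ldots,d\}$, so after a small perturbation and using Lemma~\ref{L-nbhd gd} to absorb the perturbation into $d^{o(d)}$, the image is a genuine $H$-action decomposing into mostly regular orbits, whose centralizer in $S_d$ has size $\approx |H|^{d/|H|} (d/|H|)!$ for finite $H$; Stirling then gives
\[
|\mathrm{orbit}(\tau_0)| \;\geq\; d^{d(1 - 1/|H|) - o(d)}.
\]

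Putting these together and taking $\limsup_{d\to\infty}(d\log d)^{-1}\log$ yields
\[
s_E(F_1 \cup F_2) \;\leq\; s_{E_1}(F_1) + s_{E_2 \cup H_0}(F_2) - (1 - 1/|H|),
\]
and infima over the $F_i$ followed by suprema over $E$ produce the desired bound. The main obstacle will be the orbit-size estimate: controlling the centralizer of the perturbed image of $\tau_0$ via an explicit Stirling calculation, and handling the case $|H| = \infty$ (where $1/|H| = 0$), which requires invoking the trace condition on powers of elements of $H_0$ to rule out many short cycles in $\tau_0(h)$ for $h \neq e$, ensuring the centralizer is again $d^{o(d)}$.
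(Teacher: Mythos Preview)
Your proposal is correct and follows essentially the same strategy as the paper: both arguments restrict $\sigma\in\GA(F_1\cup F_2,n,\delta,d)$ to $G_1$ and $G_2$, and both extract the factor $d^{d(1-1/|H|)}$ via orbit--stabilizer for the $S_d$-conjugation action on the restriction $\sigma|_H$ (the paper packages this through the map $\Theta:S_d\times\Lambda\to\Lambda_1\times\Lambda_2$ and a fiber bound, whereas you group by $\tau$ and bound $\max_\tau N_2(\tau)$, but this is the same double count). One minor point to make explicit: you need $H_0\subseteq E_1$ so that $\sum_\tau N_1(\tau)=|\GA(F_1,n,\delta,d)|_{E_1}$; and for infinite $H$ your ``few short cycles'' heuristic should be phrased in terms of the subgroup $\langle H_0\rangle$ rather than powers of a single element (to cover torsion groups of bounded exponent), which is exactly what the paper's reference to the argument of Proposition~\ref{P-inf gd} accomplishes.
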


\begin{proof}
We may assume that both $s(G_1 )$ and $s(G_2 )$ are finite. We will also assume that
$s(G_1 *_H G_2 )$ is finite. The same argument with minor modifications can be used to handle 
the case that $s(G_1 *_H G_2 )$ is infinite.
Let $\kappa > 0$.
Since $G_1 \cup G_2$ generates $G_1 *_H G_2$, by Theorem~\ref{T-generating} there are 
nonempty finite sets $E_1 \subseteq G_1$ and $E_2 \subseteq G_2$ such that
$s(G_1 *_H G_2 ) \leq s_{E_1 \cup E_2} (G_1 \cup G_2 ) + \kappa$.
Take nonempty finite sets $F_1 \subseteq G_1$ and $F_2 \subseteq G_2$ such that 
$s_{E_1} (F_1 ) \leq s(G_1 ) + \kappa$ and $s_{E_1} (F_1 ) \leq s(G_2 ) + \kappa$.

Suppose first that $H$ is finite. We may assume that $H\subseteq E_1$ and $H\subseteq E_2$.
Let $d,n\in\Nb$ and $\delta > 0$.
Let $\sigma\in\GA (F_1 \cup F_2 ,\delta ,n,d)$.
Set
\[
V_\sigma = \big\{ c\in\{1 ,\dots ,d\} :
\sigma_{st}(c)=\sigma_s(\sigma_t(c))\text{ for all } s,t\in H\text{ and }
\sigma_s (c) \neq c \text{ for all } s\in H\setminus\{ e \} \big\} .
\]
and observe that $V_\sigma$ can be 
partitioned into $\sigma (H)$-invariant subsets of cardinality $|H|$, on each of which $\sigma |_H$
defines a transitive action of $H$. 
Since the number of partitions of $V_\sigma$ into sets of size $|H|$
is equal to $|V_\sigma|!/(|H|!^{|V_\sigma|/|H|} (|V_\sigma|/|H|)!)$ and $|V_\sigma|/d \to 1$ as $\delta\to 0$
independently of $d$ and $\sigma$, we see using Stirling's approximation that for all sufficiently large $d$ 
the cardinality of $S_d \cdot\sigma |_H$
is at least $d^{d(1-1/|H|-\kappa )}$ for some $\kappa > 0$ which does not depend on $d$ or $\sigma$ with 
$\kappa\to 0$ as $\delta\to 0$. Thus, writing $\Upsilon_\sigma$ for the subgroup
$\{ \gamma\in S_d : \gamma\cdot\sigma |_H = \sigma |_H \}$ of $S_d$, we have, 
for all $d$ larger than some $d_0$ not depending on $\sigma$,
\begin{align*}\tag{$\ast$}
|\Upsilon_\sigma| = \frac{|S_d|}{\big|S_d \cdot\sigma |_H \big|} \leq \frac{d!}{d^{d(1-1/|H|-\kappa )}}
\end{align*}

Now set $\Lambda_i = \{ \sigma |_{E_i} : \sigma\in\GA (F_i ,\delta ,n,d) \}$ for $i=1,2$
and $\Lambda = \{ \sigma |_{E_1 \cup E_2} : \sigma\in\GA (F_1 \cup F_2 ,\delta ,n,d) \}$.
Since for every $\sigma\in\GA (F_1 \cup F_2 ,\delta ,n,d)$
we have $\sigma |_{G_1} \in \GA (F_1 ,\delta ,n,d)$ and $\sigma |_{G_2} \in \GA (F_2 ,\delta ,n,d)$,
and $\GA (F_1 ,\delta ,n,d)$ is invariant under the action of $S_d$,
we can define a map $\Theta : S_d \times \Lambda \to \Lambda_1 \times \Lambda_2$
by $(\gamma,\omega)\mapsto (\gamma\cdot\omega |_{E_1} , \omega |_{E_2} )$.
If $(\gamma ,\omega)$ is a pair in $S_d \times \Lambda$, then every other pair in 
$S_d \times \Lambda$ with the same image as $(\gamma,\omega)$ under $\Theta$
has the form $(\tilde{\gamma} , \tilde{\omega} )$ where $\tilde{\gamma} \in S_d$,
$\tilde{\gamma} \cdot\omega |_H = \omega |_H$, $\tilde{\omega} |_{E_1} = \tilde{\gamma}^{-1} \cdot\omega |_{E_1}$, 
and $\tilde{\omega} |_{E_2} = \omega |_{E_2}$. Note in particular that $\tilde{\omega}$ is determined 
by $\tilde{\gamma}$. It follows by ($\ast$) that for all sufficiently large $d$ 
the inverse image under $\Theta$ of each pair in $\Lambda_1 \times\Lambda_2$ has cardinality at most 
$d! / d^{d(1-1/|H|-\kappa )}$, in which case
\begin{align*}
d! |\Lambda| = |S_d \times \Lambda | 
= \sum_{(\omega_1,\omega_2)\in\Lambda_1 \times\Lambda_2} |\Theta^{-1} (\omega_1,\omega_2)| 
\leq |\Lambda_1| |\Lambda_2| \frac{d!}{d^{d(1-1/|H|-\kappa )}} .
\end{align*}
We consequently obtain
\begin{align*}
s(G_1 *_H G_2 ) &\leq s_{E_1 \cup E_2} (F_1 \cup F_2 ) + \kappa \\
&\leq s_{E_1} (F_1 ) + s_{E_2} (F_2 ) - 1 + \frac{1}{|H|} + 2\kappa \\
&\leq s(G_1 ) + s(G_2 ) - 1 + \frac{1}{|H|} + 4\kappa
\end{align*}
Since $\kappa$ was an arbitrary positive number this yields the desired inequality.

Suppose now that $H$ is infinite. 
By an argument as in the proof of Proposition~\ref{P-inf gd} that produces a collection
of sofic approximations on arbitrarily large finite sets by concatenating together 
sofic approximations on a fixed finite set and conjugating, we can find
a finite set $H_0 \subseteq H$, an $n\in\Nb$, and
a $\delta$ such that, for all sufficiently large $d$, given a $\sigma\in\GA (F_1 ,n,\delta ,d)$
the number of restrictions of elements in $S_d \cdot\sigma$ to $H_0$ is at least
$d^{d(1-\kappa )}$. Assuming that $H_0 \subseteq E_1$ and $H_0 \subseteq E_2$, this yields,
by the same type of argument used above in the case of finite $H$,
\begin{align*}
|\GA (F_1 \cup F_2 ,n,\delta ,d)|_{E_1 \cup E_2}
\leq |\GA (F_1 ,n,\delta ,d)|_{E_1} |\GA (F_2 ,n,\delta ,d)|_{E_2} d^{-d(1-\kappa )} ,
\end{align*}
which again leads to the desired inequality.
\end{proof}
 
Our goal now is to establish the reverse inequality 
for lower sofic dimension under the assumption that the common subgroup is amenable
(Lemma~\ref{L-amalgamated lower}).
 
The following is a perturbative version of the universal property for amalgamated free products.

\begin{lemma}\label{L-universal}
Let $G_1$ and $G_2$ be countable discrete groups and $H$ a common subgroup.
Let $F_1 \subseteq G_1$ and $F_2 \subseteq G_2$ be finite symmetric sets both containing $e$. 
Let $n\in\Nb$ and $\delta > 0$.
Then there are an $m\in\Nb$ and an $\eps > 0$ 
such that if $d\in\Nb$ and $\sigma : G_1 \to S_d$ and $\omega : G_2 \to S_d$ are identity-preserving maps satisfying 
\begin{enumerate}
\item $\| \sigma_s - \omega_s \|_2 < \varepsilon$ for all $s\in H$ which are contained in both $F_1^m$ and $F_2^m$,

\item $\| \sigma_{st} - \sigma_s \sigma_t \|_2 < \varepsilon$ for all $s,t\in F_1^m$, and

\item $\| \omega_{st} - \omega_s \omega_s \|_2 < \varepsilon$ for all $s,t\in F_2^m$,
\end{enumerate}
then there is an identity-preserving map $\rho : G_1 *_H G_2 \to S_d$ satisfying
\begin{enumerate}
\item[(4)] $\| \rho_s - \sigma_s \|_2 < \delta$ for all $s\in F_1$,

\item[(5)] $\| \rho_s - \omega_s \|_2 < \delta$ for all $s\in F_2$, and

\item[(6)] $\| \rho_{s_1 \cdots s_r} - \rho_{s_1}\cdots \rho_{s_r} \|_2 < \delta$
for all $r=2,\dots ,n$ and $s_1 , \dots , s_r \in F_1 \cup F_2$. 
\end{enumerate}
\end{lemma}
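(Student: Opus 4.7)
The approach is to perform the classical universal-property construction for the amalgamated free product $G_1*_H G_2$ in a perturbative fashion, using coset transversals for $H$. Fix left transversals $T_1\subseteq G_1$ and $T_2\subseteq G_2$ for $H$, each containing $e$, chosen so that for every coset $gH$ meeting $F_i^n$ a representative in $F_i^n$ is selected (arbitrary representatives elsewhere). Every $g\in G_1*_H G_2$ then has a unique normal form
\[
g=t_{i_1}t_{i_2}\cdots t_{i_k}h
\]
with $t_{i_j}\in T_{i_j}\setminus\{e\}$, alternating indices $i_j\in\{1,2\}$, and $h\in H$. Writing $\phi_1:=\sigma$ and $\phi_2:=\omega$, define
\[
\rho_g = \phi_{i_1}(t_{i_1})\,\phi_{i_2}(t_{i_2})\cdots\phi_{i_k}(t_{i_k})\,\sigma(h).
\]

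For (4) and (5): given $s\in F_i$, the normal form is $s=t_s h_s$ with $t_s\in T_i\cap F_i$ and $h_s\in H\cap F_i^2$, so $\rho_s=\phi_i(t_s)\sigma(h_s)$. Hypothesis (2) if $i=1$, or (3) if $i=2$, gives $\phi_i(s)$ within $\varepsilon$ of $\phi_i(t_s)\phi_i(h_s)$; when $i=2$ a further application of (1) to $h_s$ replaces $\omega(h_s)$ by $\sigma(h_s)$, yielding $\rho_s$. For (6): given $s_1,\dots,s_r\in F_1\cup F_2$ with $r\le n$, partition the indices into maximal runs in the same $G_j$ to write $s_1\cdots s_r=g_1g_2\cdots g_\ell$ with $g_q\in F_{j_q}^n\subseteq G_{j_q}$ and $j_q\ne j_{q+1}$. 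Blockwise application of (2) and (3) shows that $\rho_{s_1}\cdots\rho_{s_r}$ is close to $\phi_{j_1}(g_1)\cdots\phi_{j_\ell}(g_\ell)$. The normal form of $g_1\cdots g_\ell$ is obtained via the recursion $t_q h_q=h_{q-1}g_q$ with $h_0=e$, $t_q\in T_{j_q}$, $h_q\in H$, so that $g_q=h_{q-1}^{-1}t_q h_q$ in $G_{j_q}$ and approximate multiplicativity gives
\[
\phi_{j_q}(g_q)\approx\phi_{j_q}(h_{q-1}^{-1})\,\phi_{j_q}(t_q)\,\phi_{j_q}(h_q).
\]
Chaining across $q$ and collapsing each adjacent pair $\phi_{j_{q-1}}(h_{q-1})\phi_{j_q}(h_{q-1}^{-1})$ to near the identity by (1) combined with approximate multiplicativity, the product telescopes to $\phi_{j_1}(t_1)\cdots\phi_{j_\ell}(t_\ell)\phi_{j_\ell}(h_\ell)$, matching $\rho_{s_1\cdots s_r}$ after a final use of (1) when $j_\ell=2$.

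The principal obstacle is guaranteeing that every intermediate $h_q\in H$ that must be fed into (1) actually lies in $F_1^m\cap F_2^m$. An induction on $q$ based on $h_q=t_q^{-1}h_{q-1}g_q$ bounds the $F_{j_q}$-word-length of $h_q$ in terms of $n$ and the chosen transversals, and the companion $F_{3-j_q}$-bound is inherited from the analogous estimate for $h_{q-1}\in H\subseteq G_{3-j_q}$ at the previous step. Taking $m$ larger than the maximum of all such word-length bounds over $r\le n$ and $\ell\le r$, and $\varepsilon$ small enough that the $O(n)$ chained approximations sum to strictly less than $\delta$, produces the constants $m$ and $\varepsilon$ depending only on $F_1$, $F_2$, $n$, and $\delta$.
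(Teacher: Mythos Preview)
Your constructive approach via normal forms is a genuinely different route from the paper's proof, which argues by contradiction: assuming failure for every $m,\varepsilon$, one obtains sequences $\sigma_k,\omega_k$ that are $1/k$-good on $F_i^k$; passing to an ultraproduct $\prod_k S_{d_k}/\mathcal{N}$ turns these into genuine homomorphisms $\sigma',\omega'$ that agree on $H$, whence the universal property of $G_1*_HG_2$ yields a homomorphism $\gamma$ whose projection to some coordinate contradicts the assumed failure. The ultraproduct machinery lets the paper sidestep all the explicit bookkeeping you attempt.

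Your argument, however, has a genuine gap in the last paragraph. The induction you describe does not close. From $h_q=t_q^{-1}h_{q-1}g_q$ in $G_{j_q}$ you can bound $|h_q|_{F_{j_q}}$ \emph{only if} you already have a bound on $|h_{q-1}|_{F_{j_q}}$. But step $q-1$ took place in $G_{j_{q-1}}=G_{3-j_q}$ and produced a bound on $|h_{q-1}|_{F_{3-j_q}}$, not $|h_{q-1}|_{F_{j_q}}$. Your sentence ``the companion $F_{3-j_q}$-bound is inherited from the analogous estimate for $h_{q-1}$'' does not explain how a bound on $h_{q-1}$ yields one on $h_q$ in the \emph{other} alphabet; since $h_q\neq h_{q-1}$, there is no such inheritance. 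Concretely, if $h_1\in H\cap F_1^{2n}$ happens not to lie in $\langle F_2\rangle$ at all, then $h_1\notin F_2^m$ for any $m$, hypothesis~(1) says nothing about it, and your chaining collapses.

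The repair requires two additions. First, reduce (as the paper does) to the case $G_i=\langle F_i\rangle$, so that $H\subseteq\langle F_1\rangle\cap\langle F_2\rangle$ and every element of $H$ has finite word length in both alphabets; this reduction itself needs a short justification about the subgroup of $G_1*_HG_2$ generated by $F_1\cup F_2$. Second, replace the flawed induction by a finiteness argument: once transversals are fixed, the collection of all $h_q$ arising from words $s_1\cdots s_r$ with $r\le n$ and $s_j\in F_1\cup F_2$ is a finite subset of $H$, hence has a uniform bound on both $F_1$- and $F_2$-word lengths; take $m$ at least this bound. Your transversal prescription (representatives in $F_i^n$ for cosets meeting $F_i^n$) must also be strengthened, since the cosets $h_{q-1}g_qH$ encountered need not meet $F_{j_q}^n$; choosing representatives of minimal $F_i$-word length in each coset handles this once $G_i=\langle F_i\rangle$.
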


\begin{proof}
Suppose to the contrary that no such $m$ and $\varepsilon$ exist. We may assume that
$G_1$ is generated by $F_1$ and $G_2$ is generated by $F_2$. Then for every $k\in\Nb$ we can find
a $d_k \in\Nb$ and identity-preserving maps $\sigma_k : G_1 \to S_{d_k}$ and $\omega_k : G_2 \to S_{d_k}$
such that $\| \sigma_{k,s} - \omega_{k,s} \|_2 < 1/k$ for all $s\in F_1^k \cap F_2^k$, 
$\| \sigma_{k,st} - \sigma_{k,s}\sigma_{k,t} \|_2 < 1/k$ for all $s,t\in F_1^k$,
and $\| \omega_{k,st} - \omega_{k,s}\omega_{k,t} \|_2 < 1/k$ for all $s,t\in F_2^k$ but there is no
identity-preserving map $\rho : G_1 *_H G_2 \to S_{d_k}$ such that $\rho |_{G_1} = \sigma_k$,
$\rho |_{G_2} = \omega_k$, and
$\| \rho_{s_1 \cdots s_r} - \rho_{s_1}\cdots \rho_{s_r} \|_2 < \delta$
for all $r=2,\dots ,n$ and $s_1 , \dots , s_r \in F_1 \cup F_2$. 
Take a nonprincipal ultrafilter $\cU$ on $\Nb$. Write $\sN$ for the normal
subgroup of $\sG = \prod_{k=1}^\infty S_{d_k}$ consisting of all sequences $(g_k )_k$ such that
$\lim_{k\to\cU} \| g_k - \id \|_2 = 0$. Let $\pi : \sG \to \sG / \sN$ be the quotient map.
Define $\sigma' : G_1 \to \sN$ by $\sigma'_s = \pi ((\sigma_{k,s} )_k )$ and
$\omega' : G_2 \to \sN$ by $\omega'_s = \pi ((\omega_{k,s} )_k )$. Then 
$\sigma'$ and $\omega'$ are homomorphisms since $F_1$ and $F_2$ are symmetric and both contain $e$, 
and they agree on $H$. It follows by the universal property of the amalgamated free
product there is a homomorphism $\gamma : G_1 *_H G_2 \to \sG / \sN$ such that
$\gamma |_{G_1} = \sigma'$ and $\gamma |_{G_2} = \omega'$. Choose a lift 
$\tilde{\gamma} : G_1 *_H G_2 \to \sG$ of $\gamma$, which we may take to be identity-preserving. 
Then for some $m\in\Nb$ the composition
$\rho = \pi_m \circ\gamma$, where $\pi_m : \sG = \prod_{k=1}^\infty S_{d_k} \to S_{d_m}$ is the projection,
satisfies $\| \rho_s - \sigma_{m,s} \|_2 < \delta$ for all $s\in F_1$,
$\| \rho_s - \omega_{m,s} \|_2 < \delta$ for all $s\in F_2$, and
$\| \rho_{s_1 \cdots s_r} - \rho_{s_1}\cdots \rho_{s_r} \|_2 < \delta$
for all $r=2,\dots ,n$ and $s_1 , \dots , s_r \in F_1 \cup F_2$, a contradiction.
\end{proof}

Next we record a special case of Lemma~4.5 of \cite{KerLi10a}, which is based 
on the quasitiling theorem of Orntein and Weiss \cite{OrnWei87}.
The numbers $\lambda_1 , \dots , \lambda_k$ and
condition (3) below do not appear in the statement of Lemma~4.5 of \cite{KerLi10a},
but the proof of the latter is easily seen to yield this stronger version.

For a finite set $D$ and an $\varepsilon \geq 0$, we say that
a collection $\{ A_i \}_{i\in I}$ of subsets of $D$ is 
{\it $\varepsilon$-disjoint} if there exist pairwise disjoint sets $\widehat{A}_i \subseteq A_i$ such that
$|\widehat{A}_i | \geq (1-\varepsilon ) | A_i |$ for all $i\in I$. A set $A\subseteq D$ is said
to {\it $\eps$-cover} $D$ if $|A| \geq\eps |D|$.

\begin{lemma} \label{L-qt}
Let $G$ be a countable discrete group. 
Let $0<\eps<1$. Then there are a $k\in \Nb$, numbers $0< \lambda_1 , \dots , \lambda_k \leq 1$
with $1-\eps < \lambda_1 + \cdots + \lambda_k \leq 1$, 
and an $\eta >0$ such that whenever $e\in T_1\subseteq T_2\subseteq \cdots \subseteq T_k$ 
are finite subsets of $G$ with $|(T_{j-1}^{-1}T_j) \setminus T_j|\le \eta |T_j|$ for $j=2, \dots, k$
there exists a finite set $K\subseteq G$ containing $e$ and a $\delta > 0$ such that
for every $d\in \Nb$ and every map $\sigma: G\rightarrow S_d$ satisfying
\begin{enumerate}
\item $\| \sigma_{st} - \sigma_s \sigma_t \|_2 < \delta$ for all $s,t\in K$, and
 
\item $\tr (\sigma_s ) < \delta$ for all $s\in K^{-1} K\setminus \{ e \}$
\end{enumerate}
there exist $C_1, \dots, C_k \subseteq \{ 1,\dots,d\}$ such that
\begin{enumerate}
\item[(3)] $\big| |T_j||C_j |/d - \lambda_j \big| < \eps$ for every $j=1,\dots,d$,

\item[(4)] for every $j=1, \dots, k$ and $c\in C_j$, the map $s\mapsto \sigma_s(c)$ 
from $T_j$ to $\sigma(T_j)c$ is bijective,

\item[(5)] the sets $\sigma(T_1)C_1, \dots, \sigma(T_k)C_k$ are pairwise disjoint
and the family $\bigcup_{j=1}^k\{\sigma(T_j)c: c\in C_j\}$ is $\eps$-disjoint 
and $(1-\eps )$-covers $\{1, \dots, d\}$.
\end{enumerate}
\end{lemma}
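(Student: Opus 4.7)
The plan is to track the Ornstein--Weiss quasitiling densities through the proof of \cite[Lem.~4.5]{KerLi10a} and verify that the only new ingredient, condition~(3), is automatic once the proof is done in the right order. Accordingly, I would first extract the numbers $\lambda_j$ directly from the iterative Ornstein--Weiss construction in $G$. Recall that procedure: one selects an appropriate chain of Fl{\o}lner sets, and at each step $j$ one greedily packs translates of $T_j$ among what remains after packing with $T_1,\dots,T_{j-1}$. Because $T_j$ is sufficiently invariant over $T_{j-1}$ (controlled by $\eta$), each step covers a definite proportion of the as-yet-uncovered volume, so one obtains ideal densities $\lambda_1,\dots,\lambda_k\in(0,1]$ with $1-\eps<\sum_j\lambda_j\le 1$. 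Fix $k$, the $\lambda_j$, and $\eta$ from this construction once and for all; this input depends only on $G$ and $\eps$.

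Second, I would transfer the construction to the sofic setting in the manner of \cite{KerLi10a}. Given $T_1\subseteq\cdots\subseteq T_k$ as in the hypothesis, choose $K\supseteq\bigcup_j T_j^{-1}T_j$ large enough and $\delta>0$ small enough that conditions~(1) and~(2) on $\sigma$ force $\sigma$ to behave like the left-regular action of $G$ on all but a negligibly small subset $B\subseteq\{1,\dots,d\}$: more precisely, on $\{1,\dots,d\}\setminus B$ one has $\sigma_{st}(c)=\sigma_s(\sigma_t(c))$ for all $s,t\in K$ and $\sigma_s(c)\ne c$ for all $s\in K\setminus\{e\}$, with $|B|/d\to 0$ as $\delta\to 0$. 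On the complement of $B$, running the greedy Ornstein--Weiss procedure produces the $C_j$, with (4) an immediate consequence of the second property above and (5) following from the disjointness/maximality built into the greedy selection, exactly as in \cite[Lem.~4.5]{KerLi10a}.

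The new content is condition~(3), and this is where I would put the extra care. The proportion of $\{1,\dots,d\}$ occupied by the new $T_j$-translates at step $j$ equals $|T_j||C_j|/d$ (by injectivity from (4)) up to $\eps$-disjointness error, and by maximality in the greedy step this proportion is, up to an $o(1)$-as-$\delta\to 0$ error, the same fraction of the remaining uncovered volume that the Ornstein--Weiss procedure inside $G$ achieves when tiling with $T_j$ over $T_1,\dots,T_{j-1}$. That latter fraction is by definition $\lambda_j$ (relative to the total volume), so taking $K$ large and $\delta$ small, inductively in $j=1,\dots,k$, yields $\bigl||T_j||C_j|/d-\lambda_j\bigr|<\eps$.

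The main obstacle is the uniformity of condition (3) across the iterative levels: one must choose $K$ and $\delta$ so that at every step $j$, both the local resemblance of $\sigma$ to the left-regular action and the size of the previously used centers $C_1,\dots,C_{j-1}$ are controlled simultaneously, with errors that do not accumulate beyond $\eps$. This is handled by a finite backward induction on $j$ (choosing $K$ and $\delta$ for level $j=1$ last, once one knows what tolerance is needed at levels $2,\dots,k$), mirroring the usual way one handles cumulative errors in the Ornstein--Weiss scheme.
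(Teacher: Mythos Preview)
Your proposal is correct and follows the same approach as the paper: the paper does not give a self-contained proof but simply records that this is a special case of \cite[Lem.~4.5]{KerLi10a} and that ``the proof of the latter is easily seen to yield this stronger version'' with the $\lambda_j$ and condition~(3). Your sketch is precisely an elaboration of what that sentence means---run the Ornstein--Weiss quasitiling argument as in \cite{KerLi10a} while keeping track of the density covered at each level---so there is nothing to compare.
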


\begin{lemma}\label{L-single}
For every $\eps > 0$ we have
\begin{align*}
\lim_{d\to\infty} \,\min_{A\in I_d} \frac{1}{d!^2} | \{ (U,V)\in S_d \times S_d : \tr (UAV^* ) < \varepsilon \} | = 1 .
\end{align*}
\end{lemma}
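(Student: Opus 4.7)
The plan is to estimate the probability in question by computing the expectation $\mathbb{E}[\tr(UAV^*)]$ over uniform random pairs $(U,V) \in S_d \times S_d$ and then invoking Markov's inequality. Writing the matrix $A \in I_d$ as the partial transformation $a : D \to D'$ with $|D| = |D'| = k \leq d$, and writing $U, V$ as permutations $\sigma, \tau$ of $\{1,\dots,d\}$, I observe that $UAV^*$ corresponds to the partial transformation $\sigma \circ a \circ \tau^{-1}$ with domain $\tau(D)$, so that
\[
\tr(UAV^*) = \frac{1}{d} \, \big| \{ j \in \tau(D) : \sigma(a(\tau^{-1}(j))) = j \} \big|.
\]

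The next step is the substitution $i = \tau^{-1}(j)$, which rewrites the fixed-point count as $|\{ i \in D : \sigma(a(i)) = \tau(i) \}|$. Since $\sigma$ and $\tau$ are chosen independently and uniformly from $S_d$, for each fixed $i \in D$ the random variables $\sigma(a(i))$ and $\tau(i)$ are each uniformly distributed on $\{1,\dots,d\}$ and independent, so
\[
\Pr[\sigma(a(i)) = \tau(i)] = \sum_{j=1}^d \Pr[\sigma(a(i)) = j]\Pr[\tau(i) = j] = d \cdot \frac{1}{d^2} = \frac{1}{d}.
\]
Summing over the $k$ elements of $D$ yields $\mathbb{E}[\tr(UAV^*)] = k/d^2 \leq 1/d$.

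Finally, Markov's inequality gives
\[
\Pr\big[\tr(UAV^*) \geq \varepsilon\big] \leq \frac{1}{\varepsilon d},
\]
a bound that is uniform in the choice of $A \in I_d$. Consequently
\[
\frac{1}{d!^2} \big| \{ (U,V) \in S_d \times S_d : \tr(UAV^*) < \varepsilon \} \big| \geq 1 - \frac{1}{\varepsilon d},
\]
and the right-hand side tends to $1$ as $d \to \infty$, establishing the lemma. There is no real obstacle here: the only point that might deserve care is the clean identification of $UAV^*$ with the conjugated partial transformation $\sigma a \tau^{-1}$ and the verification that $\sigma(a(i))$ and $\tau(i)$ are genuinely independent and uniform for a fixed $i$, but both facts follow directly from the definitions and the independence of $\sigma$ and $\tau$.
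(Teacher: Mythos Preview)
Your proof is correct, and it takes a genuinely different route from the paper's.

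The paper first uses the trace identity $\tr(UAV^*) = \tr(V^*UA)$ together with the fact that $(U,V)\mapsto V^*U$ is $d!$-to-$1$ to reduce to the single-permutation statement
\[
\lim_{d\to\infty}\,\min_{A\in I_d}\frac{1}{d!}\big|\{U\in S_d:\tr(UA)<\varepsilon\}\big|=1.
\]
It then picks $W\in S_d$ with $AW=AA^*$ and observes that $\tr(WUA)=\tr(UAA^*)\le\tr(U)$, so $|\{U:\tr(UA)<\varepsilon\}|\ge|\{U:\tr(U)<\varepsilon\}|$; finally it appeals to the classical Poisson limit for the number of fixed points of a uniform random permutation to conclude.

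Your argument bypasses both reductions: you exploit the independence of $U$ and $V$ directly to compute $\mathbb{E}[\tr(UAV^*)]=k/d^2\le 1/d$ and then invoke Markov's inequality. This is shorter, more elementary, and yields the explicit uniform bound $1-\tfrac{1}{\varepsilon d}$, whereas the paper's route relies on the (admittedly standard) asymptotics of derangement-type counts. The paper's reduction to a single permutation is a clean device, but for the purposes of this lemma your expectation-plus-Markov argument is more efficient.
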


\begin{proof}
Let $\eps > 0$.
Since the map $(U,V)\mapsto V^* U$ from $S_d \times S_d$ to $S_d$ is $d!$-to-$1$ 
and $\tr (UAV^* ) = \tr (V^* UA)$ for all $A\in I_d$, it is enough to prove that 
\begin{align*}\tag{$\ast$}
\lim_{d\to\infty} \,\min_{A\in I_d} \frac{1}{d!} | \{ U\in S_d : \tr (UA) < \varepsilon \} | = 1 .
\end{align*}
Let $A\in I_d$ for some $d\in\Nb$. Take a $W\in S_d$ such that $AW = AA^*$.
If $U\in S_d$ satisfies $\tr (U) < \varepsilon$ then, 
writing $\delta_1 , \dots , \delta_d$ for the standard basis vectors of $\Cb^d$,
\begin{align*}
\tr (WUA) &= \tr (UAW) = \tr (UAA^* ) \\
&\hspace*{5mm} \ = \frac{1}{d} \sum_{k=1}^d \langle UA^* A\delta_k , \delta_k \rangle \leq 
\frac{1}{d} \sum_{k=1}^d \langle U\delta_k , \delta_k \rangle = \tr (U) < \varepsilon .
\end{align*}
Since the map $U\mapsto WU$ is a bijection from $S_d$ to itself, we obtain
\[
| \{ U\in S_d : \tr (UA) < \varepsilon \} |
\geq | \{ U\in S_d : \tr (U) < \varepsilon \} | .
\]
Now it is well known that, for a fixed $k\in\Nb$, the proportion of permutations of $\{ 1,\dots ,d \}$
which have exactly $k$ fixed points tends to $e^{-1} /k!$ as $d\to\infty$ (see \cite{Rio58}, Chap.\ 3, Sect.\ 5). 
It follows that $\lim_{d\to\infty} | \{ U\in S_d : \tr (U) < \varepsilon \} |/d! = 1$, yielding ($\ast$).
\end{proof}

The following is a multiparameter version of Theorem~2.1 in \cite{ColDyk10}.

\begin{lemma}\label{L-ColDyk10}
Let $n\in\Nb$. Let $\ell\in \{1,\dots ,n \}$ and let
$\rho : \{ 1,\dots ,2n\} \to \{ 1,2,\dots ,\ell \}$ be a surjective map.
For $k=1,\ldots,2n$ and $d\in\Nb$ let $A_k^{(d)} \in I_d$. Then there are $C_n ,D_n > 0$
depending only on $n$ such that
\begin{align*}
\frac{1}{d!^\ell}
\sum_{U_1,\dots,U_\ell \in S_d} \tr \big(A_1^{(d)}(U_{\rho(1)}A_2^{(d)}U_{\rho(2)}^*)
A_3^{(d)}(U_{\rho(3)}A_4^{(d)}U_{\rho(4)}^*) \cdots A_{2n-1}^{(d)}(U_{\rho(2n-1)}
A_{2n}^{(d)}U_{\rho(2n)}^*)\big) \hspace*{5mm} \\
\ < C_n\max_{k=1,\dots,2n}\tr_d \big(A_k^{(d)}\big) + \frac{D_n}{d} .
\end{align*}
\end{lemma}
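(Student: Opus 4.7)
The plan is to compute the average over $S_d^\ell$ by expanding the trace in matrix entries and then exploiting the explicit formula for moments of entries of a uniformly random permutation. First, I would write the trace in the integrand as a sum over $4n$-tuples of indices in $\{1,\dots,d\}$, so that each $(U_i)_{ab}$ becomes the indicator $\mathbf{1}_{U_i(a)=b}$ and $(U_i^*)_{ab} = (U_i)_{ba}$. By independence, the expectation over $(U_1,\dots,U_\ell)$ factors across $i$; for each $i$, averaging a product of indicator entries of $U_i$ over uniform $U_i\in S_d$ vanishes unless the collection of source/target index pairs is consistent (no source and no target appearing twice), in which case the average equals $(d-m_i)!/d!$, which is of order $d^{-m_i}$, where $m_i$ is the number of distinct pairs appearing.

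Reassembling, one obtains a finite sum indexed by compatibility patterns on the $|\rho^{-1}(i)|$ positions of each $U_i$; each pattern determines a system of equalities among the $4n$ summation indices of the original trace. Up to bounded combinatorial multiplicities, the contribution of a pattern is a product of normalized traces of subwords in the $A_k$'s (arising from the collapsed factors $U_i A_{k+1} U_i^*$), weighted by $\prod_i (d-m_i)!/d!$ and by $d^{-1}$ times $d$ to the number of free summation indices. The crucial observation, in the spirit of Theorem~2.1 of \cite{ColDyk10}, is that patterns pairing each $U_i$-entry with the cyclically adjacent $U_i^*$-entry collapse the local product to $\tr(A_k)$ times a projection, while every other pattern identifies summation indices without recouping them through a trace and so contributes a factor of at least $1/d$. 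Summing the adjacent-pattern contributions gives a bound of the form $C_n \max_k \tr(A_k)$, and the remaining contributions give $D_n/d$, with $C_n,D_n$ depending only on $n$.

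The hard part will be the combinatorics when some $U_i$ appears many times, as happens when $\ell$ is much smaller than $n$: the number of consistency patterns grows factorially in $|\rho^{-1}(i)|$, and one must argue carefully that every non-adjacent pattern loses at least one full power of $d$. A clean way to package this is induction on $n$: given a pattern containing a crossing, apply the Cauchy--Schwarz inequality $|\tr(XY)|^2 \le \tr(X^*X)\tr(Y^*Y)$ to split the cyclic product into two shorter cyclic products of the same type, invoke the induction hypothesis on each factor, and control how the combinatorial constants evolve. Since each step is uniform in $d$ and the number of patterns is bounded in terms of $n$ alone, the constants $C_n$ and $D_n$ have the advertised dependence only on $n$.
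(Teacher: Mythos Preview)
Your setup is correct and matches the paper: expand the trace in matrix entries, use independence across the $\ell$ variables, and apply the moment formula for entries of a uniform random permutation (the average of a product of entries of $U_m$ is $(d-|r_m|)!/d!$ when the row-pattern $r_m$ equals the column-pattern $s_m$ on $\rho^{-1}(m)$, and zero otherwise). This produces a sum over tuples $(r_1,\dots,r_\ell)$ of partitions of the sets $\rho^{-1}(m)$, exactly as in the paper.

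Where you diverge is at the ``hard part''. You propose an induction on $n$ via Cauchy--Schwarz to handle the combinatorics when some $U_i$ appears many times. The paper avoids this entirely with a one-line reduction: since the sets $\rho^{-1}(m)$ partition $\{1,\dots,2n\}$, a tuple $(r_1,\dots,r_\ell)$ determines a single partition $r=r_1\cup\cdots\cup r_\ell$ of $\{1,\dots,2n\}$ with $|r|=\sum_m|r_m|$, and the weight satisfies $\prod_m(d-|r_m|)!/d!\le 2^{2n}d^{-|r|}$. This bounds the entire expression by
\[
2^{2n}\sum_{r\in\sP(\{1,\dots,2n\})}d^{-|r|-1}\sum_{i\in I(r)}a^{(1)}_{i_1,i_2}\cdots a^{(2n)}_{i_{4n-1},i_{4n}},
\]
which is precisely the quantity already estimated in the proof of Theorem~2.1 of \cite{ColDyk10} (the single-variable case). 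No new induction or Cauchy--Schwarz is needed; the multi-parameter lemma collapses term-by-term to the known one-parameter result.

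Your Cauchy--Schwarz scheme is not obviously complete as stated: it is unclear how splitting a cyclic trace at a crossing produces two factors of the \emph{same} structural form (with well-defined shorter $\rho$'s and the same independence structure), and your assertion that ``every non-adjacent pattern loses a full power of $d$'' is exactly the core of the ColDyk10 analysis, which you are invoking rather than reproving. The paper's reduction is both simpler and rigorous; I would replace your inductive paragraph with it.
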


\begin{proof}
Using independence with respect to the variables $U_1 , \dots , U_\ell$ 
and an observation in the first part of the proof of Theorem~2.1 in \cite{ColDyk10}, 
for all $1\leq i_1,i_2,\dots,i_{4n} \leq d$ we have, writing $U_k = (u^{(k)}_{i,j} )_{i,j}$,
\begin{align*}
\lefteqn{\frac{1}{d!^\ell} \sum_{U_1 ,\dots, U_\ell \in S_d} u^{(\rho (1))}_{i_1,i_2} u^{(\rho (2))}_{i_4,i_3}
u^{(\rho(3))}_{i_5,i_6} u^{(\rho(4))}_{i_8,i_7} 
\cdots u^{(\rho(2n-1))}_{i_{4n-3,4n-2}} u^{(\rho(2n))}_{i_{4n,4n-1}}} \hspace*{30mm} \\
\hspace*{30mm} &= \prod_{m=1}^\ell \frac{1}{d!}\sum_{U\in S_d} \hspace*{1mm} 
\prod_{k\in \rho^{-1} (m)} 
\begin{cases}
u^{(m)}_{i_{2k-1},i_{2k}} &  k \text{ odd} \\
u^{(m)}_{i_{2k},i_{2k-1}} &  k \text{ even} 
\end{cases} \\
&= 
\begin{cases}
\prod_{m=1}^\ell \frac{(d-|r_m|)!}{d!} & \text{if } r_m = s_m \text{ for all } m=1,\dots ,\ell \\
0 & \text{otherwise}
\end{cases}
\end{align*}
where $r_m$ and $s_m$ are the partitions of $\rho^{-1} (m)$ such that $k$ and $k'$ belong to 
the same element of $r_m$ if and only if the first lower indices of the corresponding factors 
in the middle line agree, and belong to the same element of $s_m$ if and only if the second lower indices 
of these corresponding factors agree.
For a finite set $F$ write $\sP (F)$ for the set of all partitions of $F$, and for $r\in \sP (\{ 1,\dots ,2n \})$ write 
$I(r)$ for the subset of $\{ 1,\dots ,d \}^{2n}$ depending on $r$ that appears in the proof of Theorem~2.1 in \cite{ColDyk10}.
Writing $A_k^{(d)} = (a^{(k)}_{i,j} )_{i,j}$, we then have
\begin{align*}
\lefteqn{\frac{1}{d!^\ell}
\sum_{U_1,\dots,U_\ell \in S_d} \tr \big(A_1^{(d)}(U_{\rho(1)}A_2^{(d)}U_{\rho(2)}^*)
A_3^{(d)}(U_{\rho(3)}A_4^{(d)}U_{\rho(4)}^*) \cdots A_{2n-1}^{(d)}(U_{\rho(2n-1)}A_{2n}^{(d)}
U_{\rho(2n)}^*)\big)} \hspace*{15mm} \\
\hspace*{10mm} &\leq \frac1d \sum_{r_1\in \sP (\rho^{-1} (1))} \cdots \sum_{r_\ell \in \sP (\rho^{-1} (\ell))}
\prod_{m=1}^\ell \frac{(d-|r_m|)!}{d!}
\sum_{i\in I(r_1 \cup\cdots\cup r_\ell )} a^{(1)}_{i_1,i_2} a^{(2)}_{i_3,i_4} \cdots a^{(2n)}_{i_{4n-1},i_{4n}} \\
&\leq 2^{2n} \sum_{r\in \sP (\{ 1,\dots,2n\})} d^{-|r|-1} \sum_{i\in I(r)} a^{(1)}_{i_1,i_2} a^{(2)}_{i_3,i_4} \cdots a^{(2n)}_{i_{4n-1},i_{4n}} .
\end{align*}
One can now estimate the last expression in the above display as in the proof of Theorem~2.1 in \cite{ColDyk10} 
to obtain the result.
\end{proof}

The following result is a standard sort of strengthening of Lemma~\ref{L-ColDyk10}
based on concentration results of Gromov and Milman \cite{GroMil83}. 

\begin{lemma}\label{L-conc}
Let $n\in\Nb$ and $\varepsilon > 0$. Let $\ell\in \{1,\dots ,n \}$ and let
$\rho : \{ 1,\dots ,2n\} \to \{ 1,2,\dots ,\ell \}$ be a surjective map.
For $k=1,\ldots,2n$ and $d\in\Nb$ let $A_k^{(d)} \in I_d$. 
Let $C_n > 0$ be as in Lemma~\ref{L-ColDyk10} and set
\begin{align*}
\Omega_{d,\varepsilon} =\bigg\{
(U_1,\dots,U_\ell )\in S_d^\ell : \tr_d \big(A_1^{(d)}(U_{\rho(1)}A_2^{(d)}U_{\rho(2)}^*)A_3^{(d)}
(U_{\rho(3)}A_4^{(d)}U_{\rho(4)}^*)\hspace*{30mm} \\
\hspace*{20mm} \ \cdots A_{2n-1}^{(d)}(U_{\rho(2n-1)}A_{2n}^{(d)}U_{\rho(2n)}^*)\big) 
< C_n\max_{k=1,\dots,2n}\tr_d \big(A_k^{(d)}\big) + \eps \bigg\} .
\end{align*}
Then $\lim_{d\to\infty} |\Omega_{d,\varepsilon} |/d!^\ell = 1$.
\end{lemma}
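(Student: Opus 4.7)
The plan is to combine the mean bound from Lemma~\ref{L-ColDyk10} with Gaussian-type concentration of measure on the symmetric group. Set
\[
f_d(U_1,\dots,U_\ell) = \tr_d\bigl(A_1^{(d)}(U_{\rho(1)}A_2^{(d)}U_{\rho(2)}^*) \cdots A_{2n-1}^{(d)}(U_{\rho(2n-1)}A_{2n}^{(d)}U_{\rho(2n)}^*)\bigr),
\]
so that $\Omega_{d,\eps} = \{U : f_d(U) < C_n\max_k\tr_d(A_k^{(d)}) + \eps\}$. Lemma~\ref{L-ColDyk10} says that the uniform average of $f_d$ over $S_d^\ell$ is at most $C_n\max_k\tr_d(A_k^{(d)}) + D_n/d$. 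Since $D_n/d \to 0$, for all $d$ exceeding some $d_0 = d_0(n,\eps)$ the expectation $\Eb f_d$ is at most $C_n\max_k\tr_d(A_k^{(d)}) + \eps/2$, and it suffices to show that the proportion of $U \in S_d^\ell$ with $f_d(U) > \Eb f_d + \eps/2$ goes to zero.

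First I would verify a Lipschitz estimate: if $U = (U_1,\dots,U_\ell)$ and $U'$ differs from $U$ only in that $U_m$ is replaced by $U_m'$, where $U_m$ and $U_m'$ differ by a single transposition, then $|f_d(U) - f_d(U')| \le 4n/d$. The proof is to expand the difference as a telescoping sum over the at most $2n$ occurrences of $U_m$ or $U_m^*$ in the product defining $f_d$; each term has the shape $\tr_d(Y(U_m - U_m')Z)$ with $Y,Z$ of operator norm at most $1$. Since $U_m - U_m' = U_m(T - I)$ for a transposition matrix $T$, the matrix $U_m - U_m'$ has rank one and Schatten one-norm equal to $2$, so by Hölder
\[
|\tr_d(Y(U_m - U_m')Z)| \le \tfrac{1}{d}\|Y\|_\infty\|U_m-U_m'\|_1\|Z\|_\infty \le 2/d,
\]
giving the claimed Lipschitz constant.

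Next I would invoke Maurey's concentration inequality for $(S_d, d_H)$ (the Hamming metric), which is the prototypical instance of Gromov--Milman concentration in the discrete setting: for every $1$-Lipschitz $g : S_d \to \Rb$ one has $\Pr(|g - \Eb g| > t) \le 2\exp(-t^2/(16\,d))$. Tensorizing this over the $\ell$ coordinates yields, for functions on $S_d^\ell$ that are $L$-Lipschitz with respect to the sum of Hamming metrics on the factors,
\[
\tfrac{1}{d!^\ell}\bigl|\{U \in S_d^\ell : |f_d(U) - \Eb f_d| > t\}\bigr| \le 2\exp\!\bigl(-t^2/(16\,L^2\,\ell\,d)\bigr).
\]
Applying this with $L = 2n/d$ (the bound from the Lipschitz step, noting $d_H \le 2$ per transposition) and $t = \eps/2$ gives
\[
\tfrac{1}{d!^\ell}\bigl|\{U : f_d(U) > \Eb f_d + \eps/2\}\bigr| \le 2\exp(-c_{n,\ell}\,\eps^2\,d)
\]
for a constant $c_{n,\ell} > 0$ depending only on $n$ and $\ell$. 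Combined with the expectation bound this forces $|\Omega_{d,\eps}|/d!^\ell \ge 1 - 2\exp(-c_{n,\ell}\eps^2 d)$ for $d \ge d_0$, which tends to $1$.

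The only real obstacle is the Lipschitz estimate with the correct $1/d$ scaling; once that is in hand the argument is essentially an invocation of off-the-shelf concentration. The key point is that the operator-norm bound $\|Y\|_\infty, \|Z\|_\infty \le 1$ combined with the rank-one structure of a single-transposition perturbation produces a bound proportional to $1/d$ rather than $1/\sqrt d$, which is exactly what is needed for the concentration exponent to grow linearly in $d$.
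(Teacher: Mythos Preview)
Your argument is correct and in fact yields more than the paper claims: an explicit exponential rate $|\Omega_{d,\eps}|/d!^\ell \ge 1 - 2\exp(-c_{n,\ell}\eps^2 d)$. The paper takes a softer two-step route. First, using only that $0 \le f_d \le 1$ together with the mean bound from Lemma~\ref{L-ColDyk10}, a Markov-type argument gives $|\Omega_{d,\eps}|/d!^\ell \ge \eps/(2(C_n\max_k\tr_d(A_k^{(d)}) + \eps))$ for large $d$, so $\liminf_{d\to\infty}|\Omega_{d,\eps}|/d!^\ell > 0$. Second, the paper observes (via Cauchy--Schwarz and the triangle inequality, without computing an explicit Lipschitz constant) that for every $\delta>0$ there is $\eta>0$ with $N_\eta(\Omega_{d,\eps}) \subseteq \Omega_{d,\eps+\delta}$, and then invokes the qualitative L\'evy-family property of $S_d^\ell$ (products of L\'evy families are L\'evy, citing Gromov--Milman and Maurey) to conclude that the $\eta$-neighborhood of any sequence of sets with positive $\liminf$ measure has measure tending to $1$. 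Both arguments ultimately rest on Maurey's concentration; yours extracts the quantitative form and applies it directly to $f_d - \Eb f_d$, while the paper's avoids tracking constants at the cost of giving no rate.

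One small correction: your claimed Lipschitz constant $2n/d$ with respect to the Hamming metric is slightly too optimistic. Two permutations at Hamming distance $k$ are connected by at most $k-1$ transpositions, so your per-transposition bound $4n/d$ yields a Hamming Lipschitz constant of at most $4n/d$ (the ratio $(k-1)/k$ is not bounded by $1/2$ for $k \ge 3$). This only changes the constant $c_{n,\ell}$ and has no effect on the conclusion.
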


\begin{proof}
From Lemma~\ref{L-ColDyk10} we have
\begin{align*}
\frac{1}{d!^\ell} \sum_{U_1,\dots,U_\ell \in S_d}
\tr \big(A_1^{(d)}(U_{\rho(1)}A_2^{(d)}U_{\rho(2)}^*)A_3^{(d)}(U_{\rho(3)}A_4^{(d)}U_{\rho(4)}^*) 
\cdots A_{2n-1}^{(d)}(U_{\rho(2n-1)}A_{2n}^{(d)}U_{\rho(2n)}^*)\big) \hspace*{5mm} \\
< C_n\max_{k=1,\dots,2n}\tr_d \big(A_k^{(d)}\big) + \frac{D_n}{d}  
\end{align*}
for certain constants $C_n$ and $D_n$ depending only on $n$.
Note that 
\[
0\le \tr_d\big(A_1(U_1A_2U_2^*)A_3(U_3A_4U_4^*)\cdots A_{2n-1}(U_{2n-1}A_{2n}U_{2n}^*)\big)\le1
\]
for all $U_1, \dots , U_{2n} \in S_d$ and all $A_j\in I_d$.
Set $f(d)=\max_{k=1,\dots,2n}\tr_d(A_k^{(d)})$.
From the first display above we get 
$|\Omega_{d,\varepsilon} |/d!^\ell \ge\frac{\varepsilon/2}{C_n f(d)+\varepsilon}$ 
for all sufficiently large $d$.
Since $f(d)\le 1$ for all $d$, 
we infer that $\liminf_{d\to\infty} |\Omega_{d,\varepsilon} |/d!^\ell > 0$ for all $\varepsilon>0$.

If on $S_d$ we express the normalized Hamming distance
\[
\rho_d(U,V)=\frac12\tr_d(| U-V |^2)=\frac12\| U-V \|_2^2
\]
in terms of the $2$-norm and use the Cauchy-Schwarz and triangle inequalities, then we find that for every $\delta>0$
there is an $\eta>0$ such that $N_\eta(\Omega_{d,\varepsilon})\subseteq\Omega_{d,\varepsilon+\delta}$,
where $N_\eta(\cdot)$ denotes the $\eta$-neighbourhood with respect to the maximum of the coordinatewise
normalized Hamming distances.
Gromov and Milman observe in Remark~3.6 of~\cite{GroMil83} that results of Maurey~\cite{Mau79} imply that
the symmetric groups $S_d$ equipped with normalized Hamming metrics and uniform probability measures
form a L\'{e}vy family as $d\to\infty$ (see Chapter~7 of \cite{MilSch86}). Since a finite product of L\'{e}vy families
is again a L\'{e}vy family (see Section~2 of \cite{GroMil83}), 
we conclude that $\lim_{d\to\infty} |N_\eta(\Omega_{d,\varepsilon})|/d!^\ell = 1$ for all $\eta>0$, 
yielding the lemma.
\end{proof}

\begin{lemma}\label{L-free prod}
Let $n,m\in\Nb$ and $\eps > 0$. Let $\{ Y_1 , \dots , Y_\ell \}$ be a partition of $\{ 1,\dots ,m \}$. 
Then there is a $\delta > 0$ such that the following holds. For $d\in\Nb$
fix an identification of $M_{md}$ with $M_m \otimes M_d$ which pairs off 
matrix units with tensors products of matrix units, and 
for each $k=1,\ldots,2n$ let $A_k^{(d)}$ be a partial permutation matrix in $M_{md}$ such that,
writing $A_k^{(d)} = \sum_{i,j=1}^m E_{i,j} \otimes A_{k,i,j}^{(d)} \in M_m \otimes M_d$
where the $E_{i,j}$ are matrix units, one has
\[
\max_{q=1,\dots ,\ell} \,\max_{(i,j)\in Y_q \times Y_q} \tr_d (A_{k,i,j}^{(d)}) < \delta .
\]
Write $\sX_d$ for the set of all permutation matrices in $M_{md}$ of the form 
$\sum_{r=1}^\ell P_r \otimes U_r \in M_m \otimes M_d$ where $P_r$ is the characteristic function
of $Y_r$ viewed as a diagonal matrix in $M_m$. Set
\begin{align*}
\Upsilon_{d,\eps}
&=\bigg\{
U\in \sX_d : \tr_{md} \bigg( \prod_{k=1}^n A_{2k-1} ^{(d)}\big(UA_{2k}^{(d)} U^* \big)\bigg) < \eps \bigg\}.
\end{align*}
Then $\lim_{d\to\infty} |\Upsilon_{d,\eps} |/|\sX_d | =1$.
\end{lemma}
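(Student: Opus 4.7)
The plan is to bound the expectation $\Eb_{U \in \sX_d}[F(U)]$ where $F(U) = \tr_{md}(\prod_{k=1}^n A_{2k-1}^{(d)}(UA_{2k}^{(d)}U^*))$, and then apply Gromov--Milman concentration on the product L\'{e}vy family $\sX_d \cong S_d^\ell$ exactly as in the proof of Lemma~\ref{L-conc}. First, writing $U = \sum_{r=1}^\ell P_r \otimes U_r$ and $A_k^{(d)} = \sum_{i,j} E_{i,j} \otimes A_{k,i,j}^{(d)}$, expansion of the trace via matrix units gives
\[
F(U) = \frac{1}{m} \sum_{(c,b)} \tr_d(X_{c,b}),
\]
where $(c,b) = (c_0, b_1, c_1, \ldots, b_n, c_n)$ runs over $\{1,\ldots,m\}^{2n}$ with $c_n = c_0$, and $X_{c,b} = \prod_{k=1}^n A_{2k-1, c_{k-1}, b_k}^{(d)} \, U_{r(b_k)} \, A_{2k, b_k, c_k}^{(d)} \, U_{r(c_k)}^*$ (with $r(i) \in \{1, \ldots, \ell\}$ denoting the class containing $i$).

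I will then split the sum into \emph{mono-class} tuples, for which all of $r(c_0), r(b_1), r(c_1), \ldots, r(b_n)$ coincide with a single class $q$, and \emph{multi-class} tuples, for which at least two distinct classes appear. For mono-class $(c,b)$, the hypothesis forces $\tr_d(A_{k,i,j}^{(d)}) < \delta$ on every one of the $2n$ block factors appearing in $X_{c,b}$, and only the single permutation $U_q$ is involved; applying Lemma~\ref{L-ColDyk10} with $\ell = 1$ produces $\Eb[\tr_d(X_{c,b})] < C_n \delta + D_n/d$. For multi-class $(c,b)$, at least two of the permutations appearing in $X_{c,b}$ are distinct and therefore independent, and a refinement of the partition expansion in the proof of Lemma~\ref{L-ColDyk10} (equivalently, a direct computation using the explicit second-moment formula for uniformly random permutations and its higher-order analogues) shows that $\Eb[\tr_d(X_{c,b})] = O(1/d)$ with a constant depending only on $m$ and $n$; each independent averaging supplies a factor of $d^{-1}$, while the remaining combinatorial sums over matrix entries are bounded using that products of partial permutation matrices still have entries in $\{0,1\}$.

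Combining the two estimates with the trivial bound of $m^{2n}$ on the number of tuples of each type, I obtain
\[
\Eb_{U \in \sX_d}[F(U)] \leq m^{2n-1}\bigl(C_n \delta + D_n'/d\bigr),
\]
which is less than $\eps/2$ once $\delta$ is chosen small as a function of $m, n, \eps$ and $d$ is taken sufficiently large. Finally, $F$ is Lipschitz on $\sX_d$ with respect to the product normalized Hamming metric, so Maurey's concentration inequality on $S_d^\ell$ (the argument used in the proof of Lemma~\ref{L-conc}) gives $F(U) - \Eb[F(U)] < \eps/2$ on a subset of $\sX_d$ whose density tends to $1$; combined with the expectation bound, this yields $F(U) < \eps$ on a subset of density tending to $1$, as claimed.

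The hard part is the multi-class bound, because the hypothesis does not control different-class blocks of the $A_k^{(d)}$ (whose traces can be close to $1$), so Lemma~\ref{L-ColDyk10} as stated yields no useful bound when applied term-by-term to a multi-class $(c,b)$. The improvement must be extracted from the independence of distinct $U_r$'s---a finite-dimensional quantitative form of the asymptotic freeness of independent uniform permutations---and some care is required in the cases where a given $U_r$ appears multiple times in unbalanced patterns rather than as a clean conjugation $U_r \cdot U_r^*$.
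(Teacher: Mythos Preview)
Your plan diverges from the paper at the crucial step. The paper does not split into mono-class and multi-class tuples; instead it passes to the set $\Lambda_{d,\eps}$ of pairs $(U,V)\in\sX_d\times\sX_d$ and first uses Lemma~\ref{L-single} to show that, for $V$ in a subset $\sY_d\subseteq\sX_d$ of density tending to $1$, the conjugated matrices $VA_k^{(d)}V^*$ have \emph{all} of their $M_d$-blocks of small trace: the same-class blocks by hypothesis (conjugation by a common $V_q$ preserves the trace), and the off-class blocks because $\tr_d(V_{h(i)}A_{k,i,j}^{(d)}V_{h(j)}^*)$ is small for $h(i)\neq h(j)$ by Lemma~\ref{L-single}. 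Once every block is small, Lemma~\ref{L-conc} applies directly, term by term in the block expansion, to give the conclusion for $U$ uniformly over $V\in\sY_d$. This double-conjugation device reduces the problem to one where the existing lemmas apply without modification.

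Your multi-class bound as stated is incorrect. Take $n=2$, $\ell=2$, and the class pattern $r(c_0)=r(b_1)=1$, $r(c_1)=r(b_2)=2$, so that the $U$-word is $U_1\,U_2^{*}\,U_2\,U_1^{*}$. Then $A_1,A_3$ are same-class blocks (trace $<\delta$) while $A_2,A_4$ are off-class and may each equal $I_d$. Averaging over $U_2$ gives $\Eb_{U_2}[U_2^{*}A_3U_2]=\tr_d(A_3)\,I+O(d^{-1})(J-I)$, and then averaging over $U_1$ yields
\[
\Eb[\tr_d(X_{c,b})]=\tr_d(A_1)\,\tr_d(A_3)\,\tr_d(A_2A_4)+O(d^{-1}),
\]
which is of order $\delta^2$, not $O(1/d)$ with a constant depending only on $m$ and $n$. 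The heuristic ``each independent averaging supplies a factor of $d^{-1}$'' fails precisely for nested patterns of this kind: when a block-permutation occurs as an adjacent pair $U_r^{*}\,A\,U_r$, integrating it out produces the scalar $\tr_d(A)$ rather than a factor of $d^{-1}$.

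Your overall strategy might be rescued by proving the weaker bound $\Eb[\tr_d(X_{c,b})]\le C_n'\delta+D_n'/d$ for every multi-class tuple, which would still feed into the concentration step. But this requires a genuine inductive argument interleaving the $\delta$-contributions from same-class blocks with the $d^{-1}$-contributions from independent conjugations across all possible nesting patterns, and you have not supplied one; your closing paragraph identifies exactly this difficulty without resolving it. The paper's trick sidesteps the case analysis entirely.
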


\begin{proof}
Since the map $(U,V)\mapsto V^* UV$ from $\sX_d \times \sX_d$ to $\sX_d$ is $|\sX_d |$-to-$1$,
it suffices to prove that the set
\begin{align*}
\Lambda_{d,\eps}
&=\bigg\{
(U,V)\in \sX_d \times\sX_d : \tr_{md} \bigg( \prod_{k=1}^n A_{2k-1} ^{(d)} \big(V^* UVA_{2k}^{(d)} (V^* UV)^* \big)\bigg) < \eps \bigg\}.
\end{align*}
satisfies $\lim_{d\to\infty} |\Lambda_{d,\eps} |/|\sX_d |^2 =1$.
Let $\delta > 0$, to be specified.
Define $h : \{ 1,\dots ,m \} \to \{ 1,\dots ,\ell \}$ so that $i\in Y_{h(i)}$ for every $i=1,\dots ,m$.  
Write $\sY_d$ for the set of all $\sum_{r=1}^\ell P_r \otimes V_r \in\sX_d$ such that
$\tr (V_{h(i)} A_{k,i,j}^{(d)} V_{h(j)}^* ) < \delta$ for all $k=1,\dots ,2n$ 
and $i,j=1,\dots ,m$ with $h(i)\neq h(j)$.
By multiple applications of Lemma~\ref{L-single} we 
infer that $\lim_{d\to\infty} |\sY_d |/|\sX_d | = 1$. 

Let $V=\sum_{r=1}^\ell P_r \otimes V_r \in \sY_d$. Set
\begin{align*}
\sX_{d,V,\varepsilon} = \bigg\{ U\in\sX_d : 
\tr_{md} \bigg( \prod_{k=1}^n VA_{2k-1} ^{(d)}V^* \big(UVA_{2k}^{(d)}V^* U^* \big)\bigg) < \eps \bigg\}
\end{align*}
Let $U=\sum_{r=1}^\ell P_r \otimes U_r \in\sX_d$. 
The $(i,i)$ entry of $\prod_{k=1}^n VA_{2k-1}^{(d)}V^* (UVA_{2k}^{(d)}V^* U^*)$ in 
$M_m (M_d ) \cong M_m \otimes M_d$ is equal to the sum over all $(i_1 ,\dots ,i_{2n-1} )\in \{ 1,\dots ,m\}^{2n-1}$
of the products
\begin{gather*}
\prod_{k=1}^n V_{h(i_{2k-2})} A_{2k-1,i_{2k-2} ,i_{2k-1}}^{(d)} 
V_{h(i_{2k-1} )}^* \big( U_{h(i_{2k-1} )} V_{h(i_{2k-1} )} 
A_{2k,i_{2k-1} ,i_{2k}}^{(d)} V_{h(i_{2k} )}^* U_{h(i_{2k} )}^* \big) 
\end{gather*}
where $i_0 = i_{2n} = i$. Now given $k\in\{ 1,\dots,2n \}$ and $i,j\in\{ 1,\dots,m\}$, if $h(i)\neq h(j)$ then 
$\tr (V_{h(i)} A_{k,i,j}^{(d)} V_{h(j)}^* ) < \delta$ since $V\in\sY_d$, while if $h(i)=h(j)$ then by hypothesis
\begin{align*}
\tr (V_{h(i)} A_{k,i,j}^{(d)} V_{h(j)}^* ) = \tr (A_{k,i,j}^{(d)} ) < \delta .
\end{align*}
It follows by multiple applications of Lemma~\ref{L-conc} that if $\delta$ 
is small enough as a function of $\eps$, $m$, and $n$
then we have $\lim_{d\to\infty} \mn_{V\in\sY_d} |\sX_{d,V,\varepsilon} | / |\sX_d | = 1$.
Since $\bigcup_{V\in\sY_d} \{ (U,V) : U\in\sX_{d,V,\varepsilon} \}$ is contained in $\Lambda_{d,\eps}$ and
$\lim_{d\to\infty} |\sY_d |/|\sX_d | = 1$ from above, we conclude that $\lim_{d\to\infty} |\Lambda_{d,\eps} |/|\sX_d |^2 = 1$,
as desired.
\end{proof}

Recall that for sets $W\subseteq X$ and $Z$ and a collection $\sY$ of
maps from $X$ to $Z$ we write $|\sY |_W$ for the cardinality of the the set of restrictions
of elements of $\sY$ to $W$.

\begin{lemma}\label{L-amalgamated lower}
Let $G_1$ and $G_2$ be countable discrete groups with common amenable subgroup $H$.
Then 
\[
\slower (G_1 *_H G_2 ) \geq \slower (G_1 ) + \slower (G_2 ) - 1 + \frac{1}{|H|} .
\]
\end{lemma}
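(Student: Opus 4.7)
The plan is to construct, for each fixed $\kappa > 0$, sufficiently many sofic approximations of $G_1 *_H G_2$ by combining given approximations of $G_1$ and $G_2$ via a conjugation that matches their restrictions to $H$, and to count these in a way that extracts the extra $1/|H|$ factor. Since $G_1 \cup G_2$ generates $G_1 *_H G_2$, Theorem~\ref{T-generating} reduces the problem to lower-bounding the quantity $\liminf_{d\to\infty} \frac{1}{d\log d} \log |\GA(F_1 \cup F_2, n, \delta, d)|_{E_1 \cup E_2}$ for appropriate finite sets $E_i \subseteq F_i \subseteq G_i$ chosen so that $\slower_{E_i}(F_i) \ge \slower(G_i) - \kappa$.

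Fix $\eps > 0$. Apply Lemma~\ref{L-qt} to the amenable group $H$ to obtain Følner sets $T_1 \subseteq \cdots \subseteq T_k$, weights $\lambda_j > 0$ with $\sum_j \lambda_j > 1-\eps$, and a finite set $K \subseteq H$ controlling the quality of sofic approximations of $H$; enlarge each $F_i$ to contain $K$. For any $\sigma^{(1)} \in \GA(F_1, n, \delta, d)$ and $\sigma^{(2)} \in \GA(F_2, n, \delta, d)$, the restrictions $\sigma^{(i)}|_H$ are good enough approximations of $H$ to admit tile-centres $C_j^{(i)} \subseteq \{1,\ldots,d\}$ satisfying the conclusions of Lemma~\ref{L-qt}. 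Since $|T_j||C_j^{(i)}|/d$ is close to $\lambda_j$, the cardinalities $|C_j^{(1)}|$ and $|C_j^{(2)}|$ nearly agree, and one may pair them via a bijection for each $j$ and extend by $\gamma(\sigma^{(2)}(t) c^{(2)}) = \sigma^{(1)}(t) c^{(1)}$ for $t \in T_j$, plus an arbitrary bijection on the uncovered remainder, to produce a permutation $\gamma \in S_d$ with $\gamma\, \sigma^{(2)}(h)\, \gamma^{-1} = \sigma^{(1)}(h)$ on a $(1-\eps)$-fraction of $\{1,\ldots,d\}$ for every $h$ in a prescribed large finite subset of $H$.

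The number of admissible $\gamma$ is at least $\prod_j |C_j^{(1)}|! \cdot (\eps d)!$; by Stirling and the estimate $|C_j^{(1)}| \approx \lambda_j d/|T_j|$, the logarithm of this product divided by $d\log d$ tends to $\sum_j \lambda_j/|T_j|$, which equals $1/|H|$ exactly when $H$ is finite (by taking $T_1 = H$, $\lambda_1 = 1$) and can be made arbitrarily small when $H$ is infinite (by taking the $|T_j|$ large), matching $1/|H|$ in both cases. For an admissible $\gamma$, the approximate agreement of $\sigma^{(1)}$ and $\gamma\cdot\sigma^{(2)}$ on a large finite subset of $H$ enables Lemma~\ref{L-universal} to produce an identity-preserving $\rho \colon G_1 *_H G_2 \to S_d$ with $\rho|_{F_1} \approx \sigma^{(1)}|_{F_1}$ and $\rho|_{F_2} \approx \gamma\cdot\sigma^{(2)}|_{F_2}$ and the requisite approximate multiplicativity on $F_1 \cup F_2$.

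To conclude, the map $(\sigma^{(1)}|_{E_1}, \sigma^{(2)}|_{E_2}, \gamma) \mapsto \rho|_{E_1 \cup E_2}$ has fibres of cardinality at most $d! \cdot d^{\kappa d}$ (the $d!$ accounts for the choice of $\gamma$ once $\rho$ is given, and Lemma~\ref{L-nbhd gd} controls the $2$-norm slack in recovering $\sigma^{(1)}|_{E_1}$ from $\rho|_{E_1}$ and $\sigma^{(2)}|_{E_2}$ from $\gamma^{-1}\cdot \rho|_{E_2}$). Rearranging and taking $\liminf_{d\to\infty}$ of $(d\log d)^{-1} \log$ yields the desired inequality up to $O(\kappa)$, which vanishes as $\kappa \to 0$. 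The main obstacle is verifying that $\rho$ belongs to $\GA$, i.e.\ that alternating words $s_1 \cdots s_m$ with $s_{2i-1} \in F_1 \setminus H$ and $s_{2i} \in F_2 \setminus H$ give $|\tr(\rho_{s_1}\cdots \rho_{s_m})| < \delta$. This is handled by the concentration estimate of Lemma~\ref{L-free prod} applied in the block structure $M_{md} \cong M_m \otimes M_d$ dictated by the quasi-tiling decomposition: restricting $\gamma$ to vary within the tile-respecting family of permutations $\sum_r P_r \otimes U_r$ and invoking Lemma~\ref{L-free prod}, one sees that for most such $\gamma$ all the alternating trace expressions are small, because the constituent factors $\sigma^{(i)}_s$ for $s \notin H$ already have small traces by the sofic condition on $\sigma^{(i)}$.
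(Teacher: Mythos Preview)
Your overall strategy matches the paper's: quasitile via Lemma~\ref{L-qt}, align the two sofic approximations on $H$ by a permutation, invoke Lemma~\ref{L-free prod} to control alternating traces, then apply Lemma~\ref{L-universal} and count. But the way you set up and apply Lemma~\ref{L-free prod} does not quite work, and this is where the paper invests most of its effort.

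First, Lemma~\ref{L-free prod} concerns permutations of the form $\sum_r P_r\otimes U_r$ where every $U_r$ lies in $S_d$ for a \emph{common} $d$. Your family of admissible $\gamma$, parametrized by bijections of the tile-centre sets $C_j$, lives in $\prod_j S_{|C_j|}$ with the $|C_j|$ typically all different; it is not of the required shape. The paper fixes this by working at scale $\ell N d$ with $N$ and $\ell$ chosen so that, after refining the tile-centre sets into pieces $D_i$ with $|S_i||D_i|=N'd$ uniformly in $i$, one can identify $\cB(\ell^2(\sigma(S_i)D_i))\cong M_{|S_i|}\otimes M_{N_i}\otimes M_d$ and take the conjugating permutation $U$ to act only on the common $M_d$ factors. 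Your count $\prod_j |C_j^{(1)}|!$ is not the cardinality of this family (the paper's count is $d!^{\ell'}$), and it is this latter family to which Lemma~\ref{L-free prod} applies. In particular, the two descriptions of $\gamma$ you give---first as a tile-centre matcher, then as $\sum_r P_r\otimes U_r$---are not the same set.

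Second, the hypothesis of Lemma~\ref{L-free prod} is not ``$\tr(\sigma_s)$ is small for $s\notin H$'' but rather that the individual block entries $A_{k,i,j}^{(d)}$ have small trace when $(i,j)$ lies in a diagonal block $Y_q\times Y_q$. In the paper's coordinates this amounts to bounding $\tr_d(V^{(t)}_{i,a,b,p,p})$ for $a,b$ in a tile $S_i$, and the argument is that $V^{(t)}_{i,a,b,p,p}$ is close to a block of $\sigma_{ba^{-1}t}$, which has small trace because $ba^{-1}\in H$ and $t\notin H$ force $ba^{-1}t\neq e$. This step uses approximate multiplicativity on words of length up to $|T_k|$ and is the reason the paper enlarges the control set to $L_1=(\tilde F_1^{-1}\tilde F_1)^M\cup(K^{-1}K)\cup T_k$. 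Your sentence ``because the constituent factors $\sigma^{(i)}_s$ for $s\notin H$ already have small traces'' does not supply this.

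Finally, your fibre bound $d!\cdot d^{\kappa d}$ is too crude: once $\rho|_{E_1\cup E_2}$ is fixed you do not have free choice of $\gamma\in S_d$, since not every $\gamma$ yields $\gamma^{-1}\cdot\rho|_{E_2}$ close to something in $\GA(F_2,\ldots)$. The paper's counting instead compares $|S_{\ell Nd}\cdot\omega|_{E_2}$ to $|\Lambda_{d,\sigma,\omega}\cdot\omega|_{E_2}$ via the orbit-stabilizer relation, which is what actually extracts the $-1+1/|H|$ correction.
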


\begin{proof}
To avoid ambiguities we will view $G_1$ and $G_2$ as subgroups of $G_1 *_H G_2$,
in which case $G_1 \cap G_2 = H$ (see \cite[Thm.\ 11.67]{Rot95}).
Let $\theta > 0$.
Then there exist finite sets $E_1 \subseteq G_1$ and $E_2 \subseteq G_2$ such that
$\slower_{E_1} (G_1 ) \geq\slower (G_1 ) - \theta$ and $\slower_{E_1} (G_2 ) \geq\slower (G_2 ) - \theta$.
Take  finite symmetric sets $F_1 \subseteq G_1$ and $F_2 \subseteq G_2$ each containing $e$
such that $\slower_{E_1 \cup E_2} (G_1 \cup G_2 ) \geq\slower_{E_1 \cup E_2} (F_1 \cup F_2 ) - \theta$.
We may assume that $E_2 \subseteq F_2$.

Let $\kappa > 0$.
Let $\delta > 0$, to be determined as a function of $\kappa$, and let $n$ be an integer greater than $2$. 
Set $\tilde{F}_1= F_1 \cup ((F_1 \cup F_2 )^n \cap H)$ and $\tilde{F}_2= F_2 \cup ((F_1 \cup F_2 )^n \cap H)$.
By Lemma~\ref{L-universal} there are an integer $M\geq n$ and a $\delta' > 0$ 
such that if $d\in\Nb$ and $\sigma : G_1 \to S_d$ and $\omega : G_2 \to S_d$ are 
identity-preserving maps satisfying 
$\| \sigma_s - \omega_s \|_2 < \delta'$ for all $s\in \tilde{F}_1^M \cap F_2^M$,
$\| \sigma_{st} - \sigma_s \sigma_t \|_2 < \delta'$ for all $s,t\in \tilde{F}_1^M$, and
$\| \omega_{st} - \omega_s \omega_t \|_2 < \delta'$ for all $s,t\in F_2^M$,
then there is an identity-preserving map $\Omega : G_1 *_H G_2 \to S_d$ for which
$\| \Omega_s - \sigma_s \|_2 < \delta /(16n)$ for all $s\in \tilde{F}_1^n$,
$\| \Omega_s - \omega_s \|_2 < \delta /(16n)$ for all $s\in F_2^n$, and
$\| \Omega_{s_1 \cdots s_r} - \Omega_{s_1} \cdots \Omega_{s_r} \|_2 < \delta /(16n)$
for all $r=2,\dots ,n$ and $s_1 , \dots , s_r \in \tilde{F}_1^n \cup F_2^n$. 
We may assume that $\delta' \leq \delta /(8n)$.

Set $F=(F_1^{-1} F_1 )^M \cap (F_2^{-1} F_2 )^M$. 

We will first assume that $H$ is infinite in the following part of the argument, 
and then explain afterward how to handle the case when $H$ is finite.   
Let $\eps>0$, to be determined as a function of $\delta$, $\delta'$, $n$, and $|F|$.
Let $\eta>0$ be such that $\eta\leq\eps$, to be further specified.
Since $H$ is amenable, we can apply Lemma~\ref{L-qt} and pass from
an $\eps$-disjoint family in its conclusion to a genuinely disjoint family consisting
of translates of tiles of proportionally slightly smaller size than the original tiles 
to obtain the following: there exist finite subsets 
$e\in T_1\subseteq T_2\subseteq \cdots \subseteq T_k$ of $H$ with
$|FT_j \Delta T_j |/|T_j| < \eta$ for $j=1,\dots, k$,
rational numbers $0<\lambda_1 , \dots , \lambda_k \leq1$ with $1-\eps < \lambda_1 + \cdots + \lambda_k \leq 1$,
a finite set $K\subseteq H$ containing $e$, and a $\beta > 0$ such that, writing $m$ for the smallest
positive integer such that for each $j=1,\dots ,k$ the number $\lambda_j m$ is an integer which $|T_j|!$ divides,
for every $d\in \Nb$ and every map $\sigma: G\rightarrow S_{md}$ satisfying
\begin{enumerate}
\item $\| \sigma_{st} - \sigma_s \sigma_t \|_2 < \beta$ for all $s, t\in K$, and 

\item $\tr_{md} (\sigma_s ) < \beta$ for all $s\in K^{-1} K\setminus \{ e \}$
\end{enumerate}
there exist $r_1 ,\dots ,r_k \in\Nb$ and, for each $j=1,\dots,k$, 
sets $C_{j,1} , \dots, C_{j,r_j} \subseteq \{ 1,\dots,md\}$ and $T_{j,1} ,\dots, T_{j,r_j} \subseteq T_j$ 
with $|T_{j,r} \Delta T_j |/|T_j| < \eps$ for $r=1,\dots,r_j$ such that
\begin{enumerate}
\item[(3)] $|\sigma (T_{j,1} )C_{j,1} \cup\cdots\cup \sigma (T_{j,r_j} )C_{j,{r_j}}| = \lambda_j md$ for every $j=1,\dots ,k$,

\item[(4)] for every $j=1, \dots, k$, $r=1,\dots,r_j$, and $c\in C_{j,r}$, the map $s\mapsto \sigma_s(c)$ 
from $T_{j,r}$ to $\sigma(T_{j,r})c$ is bijective,

\item[(5)] the sets $\sigma(T_{j,r})c$ for $j=1, \dots, k$, $r=1,\dots,r_j$, and $c\in C_{j,r}$ are pairwise disjoint.
\end{enumerate}

Let $N$ be a positive integer that $m\cdot\prod_{j=1}^k |T_j|!$ divides. 
Then the cardinality of every subset of each $T_j$ divides $N$.

Choose $\ell\in\Nb$ such that $(\ell - b)/\ell > \max (1 - \eps ,1-\kappa )$ where $b$ is the sum of the cardinalities
of the power sets of the tiles $T_1 , \dots , T_k$. Then $b$ is a bound of the total number of
tiles $T_{j,r}$ which can appear for any $\sigma$ as above. 
Set $\ell' = \ell - b$. We will also assume that $\ell$ is sufficiently large as a function of
$n$ and $\delta$ for a purpose to be described below.

Let $\delta'' > 0$, to be determined as a function of $\delta'$. 
Let $\delta''' > 0$ be such that $\ell N \delta''' < \delta'' /2$.
Let $\delta'''' > 0$ be smaller than $\beta$ and $\delta /(4n)$, to be further specified as a function of $\ell N\delta'''$.

Let $d\in\Nb$.
For brevity write $L_1$ for $(\tilde{F}_1^{-1} \tilde{F}_1)^M \cup (K^{-1} K) \cup T_k$ 
and $L_2$ for $(\tilde{F}_2^{-1} \tilde{F}_2)^M \cup (K^{-1} K) \cup T_k$,
and $\sY_1$ for $\SA (L_1 ,n,\delta'''' ,\ell Nd)$ and $\sY_2$ for $\SA (L_2 ,n,\delta'''' ,\ell Nd)$.

Let $\sigma\in\sY_1$ and $\omega\in\sY_2$. We apply our invocation of Lemma~\ref{L-qt}
first to $\sigma$ to get tiles $T_{j,r}$ and sets $C_{j,r}$ from which, in view of our definition of $\ell'$, 
we can produce sets $\tilde{S}_1, \dots , \tilde{S}_{\ell'} \subseteq H$ and $D_1 ,\dots , D_{\ell'} \subseteq \{ 1,\dots ,\ell Nd \}$
and a function $h:\{ 1,\dots ,\ell' \} \to \{ 1,\dots ,k\}$
such that the sets $\sigma (\tilde{S_i})D_i$ for $i=1,\dots,\ell'$ are pairwise disjoint
and, for each $i=1,\dots,\ell'$,
\begin{enumerate}
\item[(6)] $\tilde{S}_i$ is equal to $T_{h(i),r}$ for some $r$ and $D_i$ is a subset of the 
corresponding $C_{h(i),r}$, and 

\item[(7)] $|\tilde{S}_i ||D_i | = Nd$.
\end{enumerate}
Note in particular that $|\tilde{S}_i \Delta T_{h(i)} |/|T_{h(i)} | < \eps$ for $i=1,\dots,\ell'$.

Now we apply our invocation of Lemma~\ref{L-qt} to $\omega$ to get tiles like
the $T_{j,r}$ and $\tilde{S}_i$ above. In this case the tiles will be different,
but since the numbers $\lambda_i$ are independent of $\sigma$ and $\omega$ 
we can pair off the tiles $\tilde{S}_i$ with their counterparts for $\omega$ in a way
that enables us to construct a permutation $W\in S_{\ell Nd}$ such that,
setting $\omega' = W\cdot\omega \in \sY_2$,
there exist an $N' \in\Nb$ with $N'/N > 1-3\eps$ and tiles
$S_1 \subseteq \tilde{S}_1 , \dots , S_{\ell'} \subseteq \tilde{S}_{\ell'}$ 
with $|S_i |/|\tilde{S}_i | = N'/N$ for each $i=1,\dots, \ell'$ (which we obtain by intersecting 
each $\tilde{S}_i$ with its counterpart for $\omega$ and proportionally slightly shrinking these intersections
to achieve the relative cardinality condition, as is possible if we assume $N$ to be large enough) 
such that
\begin{enumerate}
\item[(8)] $|S_i ||D_i | = N'd$ for every $i=1,\dots, \ell'$,

\item[(9)] for every $i=1,\dots,\ell'$ and $c\in D_i$ the maps $s\mapsto \sigma_s(c)$ and
$s\mapsto\omega'_s (c)$ from $S_i$ to $\sigma(S_i)c$ agree and are bijective,

\item[(10)] $|S_i \Delta T_{h(i)} |/|T_{h(i)}| < 4\eps$ for every $i=1,\dots ,\ell'$,

\item[(11)] the sets $\sigma(S_i)c$ for $i=1,\dots,\ell'$ and $c\in D_i$ are pairwise disjoint.
\end{enumerate}
It follows from (3) that if $\eps$ is sufficiently small so that $|T_{h(i)} |/|S_i| \leq 2$ for $i=1,\dots,\ell'$
then we will have, for each $i=1,\dots ,\ell'$,
\begin{align*}\tag{$\ast$}
\frac{|FS_i \Delta S_i |}{|S_i|} &\leq \frac{|T_{h(i)} |}{|S_i|} 
\bigg( \frac{|FS_i \Delta FT_{h(i)} |}{|T_{h(i)}|}  + \frac{|FT_{h(i)} \Delta T_{h(i)} |}{|T_{h(i)}|}  
+ \frac{|T_{h(i)} \Delta S_i |}{|T_{h(i)}|} \bigg) \\
&< 2(4|F|\eps + \eta +4\eps ) < (8|F| + 10)\eps .
\end{align*}

For $i=1,\dots ,\ell'$ set $N_i = N'/|S_i|$, which is equal to $N/|\tilde{S_i}|$ and hence is integral
by our choice of $N$,
and write $Z_i$ for the set $\sigma (S_i)D_i = \omega' (S_i)D_i$, which has cardinality $N'd$.
We identify the subalgebra $\cB (\ell^2 (Z_i ))\cong M_{N'd}$ of 
$\cB (\ell^2 (\{ 1,\dots ,\ell Nd \} )) \cong M_{\ell Nd}$ 
with $\cB (\ell^2 (S_i)) \otimes \cB (\ell^2 (D_i )) \cong M_{|S_i|} \otimes M_{N_i d}$
in such a way that the elements $\omega'_s$ for $s\in F$ act in a manner consistent with (9).
Fix an identification $M_{N_i d} \cong M_{N_i} \otimes M_d$
under which matrix units pair with tensor products of matrix units. 
This gives us an identification 
$\cB (\ell^2 (Z_i ))\cong M_{|S_i|} \otimes M_{N_i} \otimes M_d$.
Writing $P_i$ for the characteristic function of $Z_i$ viewed as a diagonal matrix in $M_{\ell Nd}$,
for $t\in (F_1^n \cup F_2^n ) \setminus H$ we express $P_i \sigma_t P_i$ (in the case $t\in G_1$) or
$P_i \omega_t' P_i$ (in the case $t\in G_2$) as 
\[
\sum_{a,b\in S_i} \sum_{p,q=1}^{N_i} E_{a,b} \otimes E_{p,q} \otimes V^{(t)}_{i,a,b,p,q} 
\in M_{|S_i|} \otimes M_{N_i} \otimes M_d \cong \cB (\ell^2 (Z_i ))
\]
where the $E_{a,b}$ and $E_{p,q}$ are matrix units.
Write $R$ for the orthogonal projection of $\ell^2 (\{ 1,\dots ,\ell Nd \} )$ onto
$\bigoplus_{i=1}^{\ell'} \ell^2 (Z_i )$ and $\sX_{d,\sigma,\omega,W}$ for the set of all 
$U\in S_{\ell Nd}$ of the form $(1-R) + \sum_{i=1}^{\ell'} \sum_{p=1}^{N_i} P_i \otimes E_{p,p} \otimes U_{i,p}$
where $P_i \otimes E_{p,p} \otimes U_{i,p} \in M_{|S_i|} \otimes M_{N_i} \otimes M_d \cong \cB (\ell^2 (Z_i ))$.

Let $t\in (F_1^n \cup F_2^n )\setminus H$,
$i\in\{ 1,\dots ,\ell' \}$, and $p\in \{ 1,\dots N_i \}$, and let $a,b\in S_i$.
We will verify that $\tr_d (V^{(t)}_{i,a,b,p,p} ) < \delta''$. 
Suppose first that $t\in F_1^n \setminus H$.
Since $\sigma_b \sigma_a^{-1}$ sends $\sigma_a (D_i)$ to $\sigma_b (D_i)$, writing
$Q$ for the projection $E_{b,b} \otimes E_{p,p} \otimes 1_{M_d}$ we see that
\begin{align*}
E_{b,b} \otimes E_{p,p} \otimes V^{(t)}_{i,a,b,p,p}
= \sigma_b \sigma_a^{-1} \big( E_{a,b} \otimes E_{p,p} \otimes V^{(t)}_{i,a,b,p,p} \big) 
= Q\sigma_b \sigma_a^{-1} \sigma_t Q
\end{align*}
Also, since $\sigma\in\SA (L_1 ,n,\delta''''' ,\ell Nd)$, $S_i \subseteq T_{h(i)} \subseteq L_1$, $t\in L_1$, and $n\geq 3$,
we have 
\begin{align*}
\| \sigma_{ba^{-1} t} - \sigma_b \sigma_a^{-1} \sigma_t \|_2
&\leq \| \sigma_{ba^{-1} t} - \sigma_b \sigma_{a^{-1}} \sigma_t \|_2 + \| \sigma_b (\sigma_{a^{-1}} - \sigma_a^{-1} )\sigma_t \|_2 \\
&< \delta'''' + \| \sigma_a \sigma_{a^{-1}} - 1 \|_2 < 2\delta'''' .
\end{align*}
Since $Q\sigma_{ba^{-1} t}Q = E_{b,b} \otimes E_{p,p} \otimes V^{(ba^{-1} t)}_{i,b,b,p,p}$, 
it follows that if $\delta''''$ small enough, independently of $d$, then we will have 
$\| V^{(ba^{-1} t)}_{i,b,b, p,p} - V^{(t)}_{i,a,b,p,p} \|_{M_d ,2} <\ell N\delta'''$.
Next note that $\sigma$, being an element of $\SA (L_1 ,n,\delta'''' ,\ell Nd)$,
satisfies $\tr_{\ell Nd} (\sigma_{ba^{-1} t} ) < \delta''''$, since $b,a^{-1} ,t\in L_1$, $n\geq 3$, and $ba^{-1} t \neq e$ 
(as $ba^{-1} \in H$ and $t\notin H$).
Consequently
\begin{align*}
\tr_d (V^{(t)}_{i,a,b,p,p} ) &\leq \tr_d (V^{(ba^{-1} t)}_{i,b,b,p,p} )
+ \| V^{(ba^{-1} t)}_{i,b,b, p,p} - V^{(t)}_{i,a,b,p,p} \|_{M_d ,2} \\
&< \ell N \tr_{\ell Nd} (\sigma_{ba^{-1}t} ) + \ell N\delta''' 
< \ell N \delta'''' + \frac{\delta''}{2} < \delta''.
\end{align*}
By a similar argument using $\omega'$, we may also arrange, by taking $\delta''''$
smaller if necessary, that 
$\tr_d (V^{(t)}_{i,a,b,p,p} ) < \delta''$ in the case that $t\in F_2^n \setminus H$.

Write $\Upsilon_{d,\sigma ,\omega ,W}$ for the set of all 
$U\in\sX_{d,\sigma,\omega,W}$ such that
\begin{align*}
\tr_{\ell' N' d} \big(\tilde{A}_1 \tilde{A}_2\cdots \tilde{A}_k \big) < \delta'
\end{align*}
for all even numbers $k \in \{ 1,\dots ,n\}$ and $A_1 , \dots , A_k$ alternating membership in
$\{ \sigma_s: s\in F_1^n \setminus H \}$ and $\{ \omega'_s : s\in F_2^n \setminus H \}$
where $\tilde{A}_j = A_j$ if $A_j$ lies in the first of these two sets and 
$\tilde{A}_j = (RUR)A_j (RUR)^*$ if $A_j$ lies in the second.
It then follows by the previous paragraph and
Lemma~\ref{L-free prod} (taking the projections labeled $P_1 ,\dots , P_\ell$ there
to be the projections $P_i \otimes E_{p,p}$ for $i=1,\dots,\ell'$ and $p=1,\dots,N_i$
in the present context) that if $k$ is even and $\delta''$ is small enough
then $\lim_{d\to\infty} |\Upsilon_{d,\sigma ,\omega ,W} |/|\sX_{d,\sigma,\omega,W} | = 1$ 
where the convergence is uniform with respect to $\sigma$, $\omega$, and $W$.
Then for all $U\in\Upsilon_{d,\sigma ,\omega ,W}$, $k=1,\dots ,\lfloor n/2\rfloor$, 
$A_1 , A_3 ,\dots , A_{2k-1}\in\{ \sigma_s : s\in F_1^n \setminus H \}$, and 
$A_2 , A_4 ,\dots ,A_{2k} \in\{ \omega'_s : s\in F_2^n \setminus H \}$ we have,
writing $R_0$ for $R$, $R_1$ for $1-R$, and $\sigma_0$ for the element of $\{ 0,1 \}^{4k}$ whose entries
are all $0$,
\begin{align*}
\lefteqn{\tr_{\ell Nd} \bigg( \prod_{j=1}^k A_{2j-1} \big(UA_{2j} U^* \big)\bigg)}\hspace*{20mm} \\
\hspace*{20mm} &= \frac{\ell' N'}{\ell N} \tr_{\ell' N' d} \bigg( \prod_{j=1}^k A_{2j-1}
\big( (RUR)A_{2j} (RUR)^* \big)\bigg) \\
&\hspace*{10mm} \ + \sum_{\sigma\in \{ 0,1 \}^{4k} \setminus \{\sigma_0 \}} 
\tr_{\ell Nd} \bigg( \prod_{j=1}^k A_{2j-1} R_{\sigma (4j-3)} UR_{\sigma (4j-2)} 
A_{2j} R_{\sigma (4j-1)}UR_{\sigma (4j)} \big)\bigg) \\
&< \delta' + 2^{4n} \| 1-R \|_2 < \frac{\delta}{4n}
\end{align*}
using the fact that $\delta' \leq\delta /(8n)$ and assuming that $\ell$ is large enough as a function of $n$ and $\delta$
and that $\eps$ is small enough as a function of $\delta$ so that $\ell' /\ell$ and $N' /N$ are sufficiently close to $1$ 
to ensure that $2^{4n} \| 1-R \|_2 < \delta /(8n)$.
By the tracial property this shows more generally that
\begin{gather*}\tag{$\dagger$}
\tr_{\ell Nd} \big(\tilde{A}_1 \tilde{A}_2\cdots \tilde{A}_k \big) < \frac{\delta}{4n}
\end{gather*}
for all $U\in\Upsilon_{d,\sigma ,\omega ,W}$, 
even numbers $k\in \{1,\dots ,n \}$, and $A_1 , \dots , A_k$ alternating membership in
$\{ \sigma_s: s\in F_1^n \setminus H \}$ and $\{ \omega'_s : s\in F_2^n \setminus H \}$
where $\tilde{A}_j = A_j$ if $A_j$ lies in the first of these two sets and 
$\tilde{A}_j = UA_j U^*$ if $A_j$ lies in the second.

Take a $U\in\Upsilon_{d,\sigma ,\omega ,W}$ and set $\omega'' = U\cdot\omega'$. 
By ($\ast$) above as it applies to $\omega'$, if we assume $\eps$ to be small enough as a function
of $\delta'$ and $|F|$ and assume $\delta''''$ to be small enough so that the proportion of $v\in \{ 1,\dots ,d \}$
for which $\omega'_s \omega'_t (v) = \omega'_{st} (v)$ for all $s\in F$ and $t\in T_k$ is sufficiently close to one,
then for every $s\in F_1^M \cap F_2^M$ 
the matrix $\omega'_s$ almost commutes with $U$ in trace norm to within a small enough tolerance
to ensure that $\| \omega''_s - \omega'_s \|_2 < \delta' /2$.
We may similarly guarantee, by assuming $\eps$ to be small enough as a function of $\delta'$
and assuming $\delta''''$ to be small enough so that the proportion of $v\in \{ 1,\dots ,d \}$
for which $\sigma_s \sigma_t (v) = \sigma_{st} (v)$ for all $s\in F$ and $t\in T_k$ is sufficiently close to one,
that $\| \omega'_s - \sigma_s \|_2 < \delta' /2$ for every $s\in F_1^M \cap F_2^M$.
Thus $\| \omega''_s - \sigma_s \|_2 < \delta'$ for all $s\in F_1^M \cap F_2^M$.

In the case that $H$ is finite, we substitute for Lemma~\ref{L-qt} the fact that
a good sofic approximation for $H$ will decompose into transitive orbits off of a set of small proportion.
Thus, if we are given positive integers $\ell ,\ell'$ with $\ell' /\ell > \max (1-\eps ,1-\kappa )$ as before
then, setting $N=|H|$, for every sufficiently good sofic approximation
$\sigma : H\to \{ 1,\dots ,\ell Nd\}$ there are sets $D_1 , \dots , D_{\ell'} \subseteq \{ 1,\dots , \ell Nd \}$
each of cardinality $d$ such that 
\begin{enumerate}
\item[(i)] the map $s\mapsto \sigma_s (c)$ from $H$ to $\{ 1,\dots , \ell Nd \}$ is injective for every 
$c\in\bigcup_{i=1}^{\ell'} D_i$, and

\item[(ii)] the sets $\sigma (H)c$ for $c\in\bigcup_{i=1}^{\ell'} D_i$ are pairwise disjoint.
\end{enumerate}
We can then carry out a similar kind of analysis as above with the tiles $S_1 , \dots , S_{\ell'}$
all being equal to $H$, $N'$ being equal to $N$, and each $N_i$ being equal to $1$
in order to obtain $\| \omega''_s - \sigma_s \|_2 < \delta'$ for all $s\in F_1^M \cap F_2^M$
as in the previous paragraph. We also define $\sX_{d,\sigma,\omega,W}$ as before, i.e.,
as the set of all $U\in S_{\ell Nd}$ of the form 
$(1-R) + \sum_{i=1}^{\ell'} P_i \otimes U_i$ under the 
appropriate identifications, where $P_i \otimes U_i \in M_{|H|} \otimes M_d \cong \cB (\ell^2 (\sigma (H)D_i ))$
and $R$ is the orthogonal projection of $\ell^2 (\{ 1,\dots ,\ell Nd\} )$ onto
$\ell^2 \big(\bigcup_{i=1}^{\ell'} \sigma (H)D_i \big)$.

Now by our application of Lemma~\ref{L-universal} at the beginning of the proof there exists 
an identity-preserving map $\Omega : G_1 *_H G_2 \to S_d$ such that
$\| \Omega_s - \sigma_s \|_2 < \delta /(16n)$ for all $s\in \tilde{F}_1^n$,
$\| \Omega_s - \omega''_s \|_2 < \delta /(16n)$ for all $s\in \tilde{F}_2^n$, and
$\| \Omega_{s_1 \cdots s_r} - \Omega_{s_1} \cdots\Omega_{s_r} \|_2 < \delta /(16n)$
for all $r=1,\dots ,n$ and $s_1 ,\dots ,s_r \in \tilde{F}_1^n \cup \tilde{F}_2^n$.
Let us verify that $\Omega$ belongs to $\SA (F_1  \cup F_2 ,n,\delta ,\ell Nd)$.
We need only check that $\Omega_s$
has trace less than $\delta$ when $s$ is a word in $F_1 \cup F_2$ of length at most $n$ 
which does not equal $e$.
Suppose then that we are given $r\in \{ 1,\dots ,n \}$ and 
$s_1 ,\dots ,s_r \in F_1 \cup F_2$ such that $s_1 \cdots s_r \neq e$.
By a reduction procedure that starts by taking a maximal collection $\cC$
of disjoint subwords of $s_1 \cdots s_r$ each of which belongs to $H$, partitions the complement of the union of $\cC$
into subwords which alternate membership in $F_1^n$ and $F_2^n$, and then concatenates these subwords as necessary
using the fact that $\tilde{F}_1$ and $\tilde{F}_2$ both contain $(F_1 \cup F_2 )^n \cap H$,
we can write $s_1 \cdots s_r = t_1 \cdots t_v$ where either 
\begin{enumerate}
\item[(a)] $v=1$ and $t_1 \in H$, or

\item[(b)] $v\leq r$ and $t_1 ,\dots , t_v$ alternate membership in $\tilde{F}_1^n \setminus H$ and $\tilde{F}_2^n \setminus H$.
\end{enumerate}
In case (a) we have $t_1 \in (F_1 \cup F_2 )^n \cap H \subseteq \tilde{F}_1 \cap L_1$ and so
\[
\tr (\Omega_{s_1 \cdots s_r} ) 
\leq \tr (\sigma_{t_1} ) + \| \sigma_{t_1} - \Omega_{t_1} \|_2 
< \delta'''' + \frac{\delta}{16n} \leq \delta .
\]
To handle case (b), first observe that,
writing $\rho_{t_i} = \sigma_{t_i}$ if $t_i \in \tilde{F}_1^n$ and $\rho_{t_i} = \omega''_{t_i}$ if $t_i \in \tilde{F}_2^n$, 
\begin{align*}
\| \Omega_{s_1 \cdots s_r} - \rho_{t_1} \cdots\rho_{t_v} \|_2
&\leq \| \Omega_{t_1 \cdots t_v} - \Omega_{t_1} \cdots\Omega_{t_v} \|_2 \\
&\hspace*{10mm} \ + \sum_{i=1}^v 
\| \rho_{t_1} \cdots\rho_{t_{i-1}} (\Omega_{t_i} - \rho_{t_i} )\Omega_{t_{i+1}} \cdots\Omega_{t_v} \|_2 \\
&< \frac{\delta}{16n} + \sum_{i=1}^v \| \Omega_{t_i} - \rho_{t_i} \|_2 
< \frac{\delta}{16n} + n\cdot \frac{\delta}{16n} \leq \frac{\delta}{2} 
\end{align*}
so that
\begin{align*}
\tr (\Omega_{s_1 \cdots s_r} ) 
\leq \tr (\rho_{t_1} \cdots\rho_{t_v} ) +
\| \Omega_{s_1 \cdots s_r} - \rho_{t_1} \cdots\rho_{t_v} \|_2 
< \tr (\rho_{t_1} \cdots\rho_{t_v} ) + \frac{\delta}{2} .
\end{align*}
It thus suffices to show that $\tr (\rho_{t_1} \cdots\rho_{t_v} ) < \delta /2$.
If $v$ is even then we have the estimate $\tr (\rho_{t_1} \cdots\rho_{t_v} ) < \delta /(4n)$ 
provided by ($\dagger$) (the division by $4n$ will be useful below).
In the case $v=1$ we have $t_1 \in \tilde{F}_1^n \cup \tilde{F}_2^n \subseteq L_1 \cup L_2$ so that
\[
\tr (\rho_{t_1} ) < \delta'''' < \frac{\delta}{4n} .
\]
Finally, if $v$ is odd and greater than $1$, we first note that
$t_1$ and $t_v$ either both belong to $\tilde{F}_1^n$ or both belong to $\tilde{F}_2^n$ so that 
$\| \rho_{t_v} \rho_{t_1} - \rho _{t_v t_1} \|_2 < \delta'''' \leq \delta /(4n)$, and then
subdivide into two cases:
\begin{enumerate}
\item[(c)] $t_v t_1 \notin H$. As $t_v t_1$ belongs to $\tilde{F}_1^n$ if $t_1 , t_v \in \tilde{F}_1^n$ or to $\tilde{F}_2^n$ if $t_1 , t_v \in \tilde{F}_2^n$,
we can reduce to the case of even $v$:
\begin{align*}
\tr (\rho_{t_1} \cdots\rho_{t_v} ) 
&\leq \tr (\rho_{t_v t_1} \rho_{t_2} \cdots\rho_{t_{v-1}} ) 
+ \| (\rho_{t_v} \rho_{t_1} - \rho _{t_v t_1} )\rho_{t_2} \cdots\rho_{t_{v-1}} \|_2 \\
&< \frac{\delta}{4n} + \frac{\delta}{4n} = \frac{\delta}{2n} .
\end{align*}

\item[(d)] $t_v t_1 \in H$. In this case $t_v t_1 t_2 \notin H$, $t_v t_1$ belongs to one of $\tilde{F}_1^n$ and $\tilde{F}_2^n$
and $t_2$ to the other, and $t_v t_1 t_2 \in \tilde{F}_1^n \cup \tilde{F}_2^n$ since $t_v t_1 \in (F_1 \cup F_2 )^n \cap H$. Thus
\begin{align*}
\| \rho_{t_v t_1 t_2} - \rho_{t_v t_1} \rho_{t_2} \|_2
&\leq \| \rho_{t_v t_1 t_2} - \Omega_{t_v t_1 t_2} \|_2 + \| \Omega_{t_v t_1 t_2} - \Omega_{t_v t_1} \Omega_{t_2} \|_2 \\
&\hspace*{10mm} \ + \| (\Omega_{t_v t_1} - \rho_{t_v t_1} )\Omega_{t_2} \|_2 
+  \| \rho_{t_v t_1} (\Omega_{t_2} - \rho_{t_2} ) \|_2 \\
&< 4\cdot \frac{\delta}{16n} = \frac{\delta}{4n}
\end{align*}
and so
\begin{align*}
\tr (\rho_{t_1} \cdots\rho_{t_v} ) 
&\leq \tr (\rho_{t_v t_1 t_2} \rho_{t_3} \cdots\rho_{t_{v-1}} ) + \| \rho_{t_v t_1 t_2} - \rho_{t_v t_1} \rho_{t_2} \|_2 \\
&\hspace*{15mm} \ + \| (\rho_{t_v t_1} - \rho_{t_v} \rho_{t_1} )\rho_{t_2} \|_2 \\
&< \tr (\rho_{t_v t_1 t_2} \rho_{t_3} \cdots\rho_{t_{v-1}} ) + \frac{\delta}{2n} .
\end{align*}
To estimate $\tr (\rho_{t_v t_1 t_2} \rho_{t_3} \cdots\rho_{t_{v-1}} )$ we again run through the procedure 
for handling the case of odd $v$ in (b) with $\rho_{t_1} \cdots\rho_{t_v}$ now replaced by the shorter product
$\rho_{t_v t_1 t_2} \rho_{t_3} \cdots\rho_{t_{v-1}}$. This may lead us back repeatedly into
case (d), but after less than $n$ steps the process will stop, with the estimates 
accumulating to yield $\tr (\rho_{t_1} \cdots\rho_{t_v} ) < \delta /2$.
\end{enumerate}
We conclude that $\Omega\in\SA (F_1  \cup F_2 ,n,\delta ,\ell Nd)$, as desired.

We finish the proof with the following counting argument. 
Note that $\Omega$ was obtained by amalgamating perturbations of $\sigma$ and $\omega''$, 
where the latter was obtained from $\omega$ by conjugating by $W$ and then by $U$. 
The set $\Lambda_{d,\sigma ,\omega}$ of all products $UW$ such that $U$ and $W$
together do the required job has cardinality at least $|\sX_{d,\sigma,\omega,W_0} |/2$
for all $d$ larger than some $d_0$ not depending on $\sigma$ or $\omega$, where $W_0$ is any fixed
$W$ doing the required job. Thus,
by Stirling's approximation and the fact that $\ell' > (1-\kappa )\ell$, for all sufficiently large $d$ we have
\begin{align*}
|\Lambda_{d,\sigma ,\omega} | 
\geq \frac12 |\sX_{d,\sigma,\omega,W_0} | \geq \frac12 d!^{\ell'} \geq d^{\ell d(1-\kappa )} 
\end{align*}
and hence, writing $\cS$ for the set of all $U\in S_{\ell Nd}$
such that $(U\cdot\omega )|_{E_2} = \omega |_{E_2}$,
\begin{align*}
|S_{\ell Nd} \cdot\omega |_{E_2} = \frac{|S_{\ell Nd} |}{|\cS |} 
&\leq \frac{(\ell Nd)^{\ell Nd}}{|\cS |} \cdot \frac{|\Lambda_{d,\sigma ,\omega} |}{d^{\ell d(1-\kappa )}} \\
&= (\ell Nd)^{\ell Nd}d^{-\ell d(1-\kappa )}\frac{|\Lambda_{d,\sigma ,\omega} |}{|\cS |} \\
&\leq (\ell Nd)^{\ell Nd}d^{-\ell d(1-\kappa )} |\Lambda_{d,\sigma ,\omega} \cdot\omega |_{E_2} .
\end{align*}
Let $\sR$ be a set of representatives for the orbits of the action of $S_{\ell Nd}$ on $\sY_2$
modulo the relation of equality on $E_2$.
Then $|\sY_2 |_{E_2} = \sum_{\omega\in\sR} |S_{\ell Nd} \cdot\omega |_{E_2}$.
Since $E_2 \subseteq L_2$, 
by Lemma~\ref{L-nbhd gd} we see that, independently of $d$, 
if $\delta$ is small enough as a function of $\kappa$ then,
modulo the relation of equality on $E_2$, given a $\sigma\in\sY_1$ at most $(\ell Nd)^{\kappa \ell Nd}$ many $\omega\in\sY_2$ 
which all differ on $E_2$ can lead via our procedure to maps $\Omega$ which all agree
on $E_1 \cup E_2$. Take a set $\sY_1'$
of representatives for the relation on $\sY_1$ given by equality on $E_1$. By Lemma~\ref{L-nbhd gd},
if $\delta$ is small enough independently of $d$ then we can find a $\sY_1'' \subseteq \sY_1'$ with 
$|\sY_1'' |\geq (\ell Nd)^{-\kappa\ell Nd} |\sY_1' |$ such that distinct elements of $\sY_1''$ give rise to
maps $\Omega$ which disagree on $E_1$ no matter which elements of $\sY_2$ are used. We therefore obtain,
for all sufficiently large $d$,
\begin{align*}
|\SA (F_1 \cup F_2 ,n ,\delta ,\ell Nd)|_{E_1 \cup E_2}
&\geq (\ell Nd)^{-\kappa \ell Nd} \sum_{\sigma\in\sY_1''}
\sum_{\omega\in\sR} |\Lambda_{d,\sigma ,\omega}\cdot\omega |_{E_2} \\
&\geq (\ell Nd)^{-\ell Nd(1 + \kappa )} d^{\ell d(1-\kappa )} |\sY_1'' | 
\sum_{\omega\in\sR} |S_{\ell Nd} \cdot\omega |_{E_2} \\
&\geq (\ell Nd)^{-\ell Nd(1 + 2\kappa )} d^{\ell d(1-\kappa )} |\sY_1 |_{E_1} 
\sum_{\omega\in\sR} |S_{\ell Nd} \cdot\omega |_{E_2} \\
&= (\ell Nd)^{-\ell Nd(1 + 2\kappa )} d^{\ell d(1-\kappa )}
|\sY_1 |_{E_1} |\sY_2 |_{E_2} 
\end{align*}
and hence, in view of Lemma~\ref{L-mult},
\begin{align*}
\lefteqn{\liminf_{d\to\infty} \frac{1}{\ell Nd\log (\ell Nd)} 
|\SA (F_1 \cup F_2 ,n,\delta ,\ell Nd)|_{E_1 \cup E_2}}\hspace*{15mm} \\
\hspace*{10mm} &\geq \liminf_{d\to\infty} \frac{1}{\ell Nd\log (\ell Nd)} |\SA (L_1 ,n,\delta'''',\ell Nd)|_{E_1} \\
&\hspace*{7mm} \ + \liminf_{d\to\infty} \frac{1}{\ell Nd\log (\ell Nd)} |\SA (L_2 ,n,\delta'''',\ell Nd)|_{E_2} - 1 
+ \frac1N (1-\kappa ) - 2\kappa \\
&\geq \slower_{E_1} (L_1 ) + \slower_{E_2} (L_2 ) - 1 + \frac1N (1-\kappa ) - 2\kappa .
\end{align*}
Since $n$ was an arbitary postive integer, $\kappa$ and $\delta$ can be taken arbitrarily small, 
and $N = |H|$ in the case that $H$ is finite,
it follows that $\slower_{E_1 \cup E_2} (F_1 \cup F_2 ) 
\geq \slower_{E_1} (G_1 ) + \slower_{E_2} (G_2 ) - 1 + |H|^{-1}$. Thus, using Theorem~\ref{T-generating} together with
the fact that $G_1 \cup G_2$ generates $G_1 *_H G_2$,
\begin{align*}
\slower (G_1 *_H G_2 ) 
&= \slower (G_1 \cup G_2 ) \\
&\geq\slower_{E_1 \cup E_2} (G_1 \cup G_2 ) \\
&\geq\slower_{E_1 \cup E_2} (F_1 \cup F_2 ) - \theta \\
&\geq \slower_{E_1} (G_1 ) + \slower_{E_2} (G_2 ) - 1 + \frac{1}{|H|} - \theta \\
&\geq \slower (G_1 ) + \slower (G_2 ) - 1 + \frac{1}{|H|} - 3\theta .
\end{align*}
Since $\theta$ was an arbitrary positive number we thereby obtain the result.
\end{proof}

\begin{remark}
Lemma~\ref{L-amalgamated lower}, in conjunction with Propositions~\ref{P-sofic inf group} and
\ref{P-finite}, gives a free probability proof that $G_1 *_H G_2$ is sofic
whenever $G_1$ and $G_2$ are sofic countable discrete groups with common amenable subgroup $H$.
This fact was established for monotileable $H$ in \cite{ColDyk10} using similar free probability 
arguments, and in general in \cite{EleSza10} by means of graph techniques and in \cite{Pau10}
using Bernoulli shifts and equivalence relations.
\end{remark}

Combining Lemmas~\ref{L-amalgamated upper} and \ref{L-amalgamated lower} we obtain the following.

\begin{theorem}\label{T-amalgamated}
Let $G_1$ and $G_2$ be countable discrete groups with common amenable subgroup $H$.
Suppose that $G_1$ and $G_2$ are approximation regular.
Then $G_1 *_H G_2$ is approximation regular and
\[
s(G_1 *_H G_2 ) = s(G_1 ) + s(G_2 ) - 1 + \frac{1}{|H|} .
\]
\end{theorem}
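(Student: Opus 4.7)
The theorem follows by directly combining the two lemmas just established, so the plan is really just a chain of inequalities. First I would observe the trivial inequality $\slower(G_1 *_H G_2) \leq s(G_1 *_H G_2)$, which is immediate from the definitions since a limit infimum is always bounded above by a limit supremum.

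Next I would apply Lemma~\ref{L-amalgamated upper} to obtain
\[
s(G_1 *_H G_2) \leq s(G_1) + s(G_2) - 1 + \frac{1}{|H|},
\]
and Lemma~\ref{L-amalgamated lower}, which uses the amenability of $H$, to obtain
\[
\slower(G_1 *_H G_2) \geq \slower(G_1) + \slower(G_2) - 1 + \frac{1}{|H|}.
\]
The hypothesis that $G_1$ and $G_2$ are approximation regular means $s(G_i) = \slower(G_i)$ for $i = 1, 2$, so the right-hand sides of the two displayed inequalities coincide. Combining everything yields
\[
\slower(G_1 *_H G_2) \geq s(G_1 *_H G_2),
\]
and together with the reverse inequality this forces equality throughout, giving both the formula for $s(G_1 *_H G_2)$ and the approximation regularity of $G_1 *_H G_2$.

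There is no obstacle of substance here; all the work has already been done in Lemmas~\ref{L-amalgamated upper} and~\ref{L-amalgamated lower}. The only thing to verify is that the bookkeeping lines up correctly, namely that the upper bound uses the supremum quantity $s$ on both sides while the lower bound uses the infimum quantity $\slower$ on both sides, so that approximation regularity of the factors is precisely what is needed to sandwich both $s(G_1 *_H G_2)$ and $\slower(G_1 *_H G_2)$ between equal quantities.
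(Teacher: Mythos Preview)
Your proposal is correct and matches the paper's approach exactly: the paper simply states that the theorem follows by combining Lemmas~\ref{L-amalgamated upper} and~\ref{L-amalgamated lower}, and your chain of inequalities is precisely the bookkeeping that makes that combination explicit.
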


\begin{corollary}\label{C-free}
Let $r\in\Nb\cup \{ \infty \}$. Then $s(F_r ) = \slower (F_r ) = r$.
\end{corollary}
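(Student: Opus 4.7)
The plan is to deduce the corollary from Theorem~\ref{T-amalgamated} by induction on $r$, after settling the base case $r=1$ directly, and then to handle $r=\infty$ via a self-similarity argument. For $r=1$, since $\Zb = F_1$ is amenable (hence sofic) and infinite, Proposition~\ref{P-sofic inf group} gives $\slower(\Zb)\geq 1$, while Proposition~\ref{P-min gen} gives $s(\Zb)\leq 1$ because $\Zb$ is cyclic. Combined with the tautological inequality $s\geq\slower$ (from $\limsup\geq\liminf$ in the definitions), this pins down $s(\Zb)=\slower(\Zb)=1$, so $\Zb$ is approximation regular.

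For finite $r\geq 2$, I induct using the decomposition $F_r\cong F_{r-1} * \Zb$, which is an amalgamated free product over the trivial (and amenable) subgroup $H=\{e\}$ with $|H|=1$. By the inductive hypothesis both $F_{r-1}$ and $\Zb$ are approximation regular with $s(F_{r-1})=r-1$ and $s(\Zb)=1$, so Theorem~\ref{T-amalgamated} applies and produces
\[
s(F_r) = s(F_{r-1}) + s(\Zb) - 1 + \tfrac{1}{|H|} = (r-1) + 1 - 1 + 1 = r,
\]
while simultaneously ensuring that $F_r$ itself is approximation regular, so $\slower(F_r)=r$ as well.

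The inductive scheme does not reach $r=\infty$ directly, so there I exploit the self-similarity $F_\infty \cong \Zb * F_\infty$ obtained by splitting off a single free generator. Applying the lower-bound Lemma~\ref{L-amalgamated lower}, which (unlike Theorem~\ref{T-amalgamated}) does \emph{not} require approximation regularity, with $H=\{e\}$ yields
\[
\slower(F_\infty) \geq \slower(\Zb) + \slower(F_\infty) - 1 + 1 = 1 + \slower(F_\infty).
\]
Since $F_\infty$ is residually finite, hence sofic, and infinite, Proposition~\ref{P-sofic inf group} gives $\slower(F_\infty)\geq 1$, which rules out the sentinel value $-\infty$; any finite value of $\slower(F_\infty)$ would then force $0\geq 1$, a contradiction, so $\slower(F_\infty)=\infty$. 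Since $s\geq\slower$ in general, it follows that $s(F_\infty)=\infty$ as well. This self-similarity trick in the $r=\infty$ case is the only genuine subtlety in the proof; everything else is immediate from Theorem~\ref{T-amalgamated} once the base case is in hand.
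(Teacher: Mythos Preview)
Your proof is correct. For finite $r$ it follows the paper's approach exactly (induction via Theorem~\ref{T-amalgamated}, base case $s(\Zb)=\slower(\Zb)=1$), though your justification of the base case via Propositions~\ref{P-sofic inf group} and~\ref{P-min gen} is slightly cleaner than the paper's forward reference to Theorem~\ref{T-amenable group}.

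The genuine difference is in the case $r=\infty$. The paper argues directly: using Lemma~\ref{L-conc} it shows that a random tuple of $m$ permutations of $\{1,\dots,d\}$ is, with high probability, a good sofic model for $F_m$, from which one reads off $\slower_{\{s_1,\dots,s_n\}}(F_\infty)=n$ for every $n$ and concludes via Theorem~\ref{T-generating}. Your self-similarity argument, applying Lemma~\ref{L-amalgamated lower} to $F_\infty\cong\Zb * F_\infty$ to obtain $\slower(F_\infty)\geq 1+\slower(F_\infty)$, is both shorter and more self-contained: it avoids reopening the concentration machinery and instead leverages the free-product lower bound already proved. The paper's route, on the other hand, yields more: it computes the local quantities $s_{\{s_1,\dots,s_n\}}(\{s_1,\dots,s_m\})$ explicitly, which your argument does not. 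For the purposes of the corollary as stated, your approach is a nice streamlining.
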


\begin{proof}
If $r < \infty$ then we can repeatedly apply Theorem~\ref{T-amalgamated} using the fact 
that $s(\Zb ) = \slower (\Zb ) = 1$, which one can either compute directly or
obtain from Theorem~\ref{T-amenable group} below. Consider then the case $r=\infty$.
Let $s_1 , s_2 ,\dots$ be the standard generators for $F_\infty$. Then,
in the spirit of the proof of Lemma~\ref{L-free prod}, for positive integers
$n\leq m$ one can show by repeated application of Lemma~\ref{L-conc} 
that a random choice of $m$ permutations of a finite set $\{ 1,\dots ,d \}$ will, with high
probability, be a good sofic model for $G$ up to within some prescribed precision.
This will demonstrate that $s_{\{ s_1 , \dots , s_n \}} (\{ s_1 , \dots , s_m \} ) = n$
and hence $\slower_{\{ s_1 , \dots , s_n \}} (F_\infty ) = n$,
so that $\slower (F_\infty ) = \infty$ by Theorem~\ref{T-generating}. 
\end{proof}

\begin{theorem}\label{T-amenable group}
Suppose that $G$ is amenable. Then $s(G) = \slower (G) = 1 - |G|^{-1}$. 
\end{theorem}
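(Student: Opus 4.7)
The plan is to treat the finite and infinite cases separately. If $G$ is finite, Proposition~\ref{P-finite} already yields $s(G) = 1 - |G|^{-1}$, and the matching lower-sofic-dimension equality follows from the same arguments: the lower bound construction in the proof of Proposition~\ref{P-finite index} applied with $H = \{e\}$ produces sofic approximations on $\{1,\ldots,md\}$ for every $d \in \Nb$, which via Lemma~\ref{L-mult} yields $\slower(G) \geq 1 - |G|^{-1}$; and the upper-bound estimate in the proof of Proposition~\ref{P-finite} is pointwise in $d$, so it also bounds the liminf.

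From now on I assume $G$ is infinite. Since amenable groups are sofic and have infinite classes, Proposition~\ref{P-sofic inf group} gives $\slower(G) \geq 1$, so together with the trivial inequality $\slower(G) \leq s(G)$ it will remain only to prove $s(G) \leq 1$ in order to conclude $\slower(G) = s(G) = 1 = 1 - |G|^{-1}$.

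The key tool for the upper bound is the Ornstein--Weiss quasi-tiling of Lemma~\ref{L-qt}, through which I would establish a rigidity statement in the spirit of Elek--Szab\'{o}: any two sufficiently good sofic approximations of $G$ into $S_d$ are approximately conjugate. Concretely, given a finite set $E \subseteq G$ and $\kappa > 0$, I would first use Lemma~\ref{L-ineq} to choose $\eps > 0$ with $s_E(F,n) \leq s_{E,\eps}(F,n) + \kappa$ for all $F \supseteq E$ and $n \in \Nb$, and then apply Lemma~\ref{L-qt} with a small auxiliary parameter $\eps'$ to produce F\o lner sets $e \in T_1 \subseteq \cdots \subseteq T_k$ with fixed proportions $\lambda_1,\ldots,\lambda_k$, a finite set $K \subseteq G$, and $\delta_0 > 0$. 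Taking $F \supseteq E \cup K \cup T_k$ and $\delta \leq \delta_0$, Lemma~\ref{L-qt} ensures that every $\sigma \in \GA(F,n,\delta,d)$ decomposes $\{1,\ldots,d\}$, up to an exceptional set of proportion at most $\eps'$, into pairwise disjoint tiles $\sigma(T_j)c$ with proportions close to $\lambda_j$. Given two such $\sigma,\omega$, the tiles of the same $T_j$-type can be paired off by a permutation $U \in S_d$ that also aligns basepoints, and the resulting $U\cdot\omega$ agrees with $\sigma$ on the tiled portion; provided $\eps'$ and $\delta$ are sufficiently small, one obtains $\rho_E(U\cdot\omega, \sigma) < \eps/2$.

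This rigidity then implies that $\GA(F,n,\delta,d)$ lies in an $\eps/2$-neighborhood of the orbit $S_d \cdot \sigma_0$ for any fixed $\sigma_0 \in \GA(F,n,\delta,d)$, so every $\eps$-separated subset with respect to $\rho_E$ injects by an obvious map into $S_d \cdot \sigma_0|_E$ and therefore has cardinality at most $|S_d| = d!$. Stirling's approximation gives $\log d! = d\log d\,(1+o(1))$, hence $s_{E,\eps}(F,n,\delta) \leq 1$, and then $s_E(F) \leq 1 + \kappa$. Letting $\kappa \to 0$ and taking the supremum over $E$ yields $s(G) \leq 1$, completing the proof. The main technical obstacle is orchestrating the approximate conjugacy $U$ so that it simultaneously handles the small residual untiled region, absorbs the basepoint freedom within each tile as a subexponential contribution, and remains compatible with the $\eps$-separation structure used in Lemma~\ref{L-ineq}, so that the estimate genuinely reduces to a count of orbit restrictions in $S_d$.
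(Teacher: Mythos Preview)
Your proposal is correct and follows essentially the same approach as the paper: reduce to the infinite case via Proposition~\ref{P-finite}, use Proposition~\ref{P-inf gd} for the lower bound $\slower(G)\geq 1$, and for the upper bound invoke Lemma~\ref{L-qt} to show that any two sufficiently good sofic approximations are approximately conjugate, whence $s(G)\leq 1$. Your write-up is more explicit than the paper's (which compresses the upper bound into two sentences), in particular in spelling out the role of Lemma~\ref{L-ineq} and the covering argument bounding $N_\eps$ by $|S_d|$, and in noting that the finite-case arguments are pointwise in $d$ and hence also control $\slower$; but the strategy is identical.
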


\begin{proof}
By Proposition~\ref{P-finite} we may assume that $G$ is infinite.
Then by Proposition~\ref{P-inf gd} we have $s(G)\geq\slower (G) \geq 1$.
On the other hand, it follows from Lemma~\ref{L-qt} that for any two good enough sofic approximations
$G\to\Sym (d)$ there is an element of $\Sym (d)$ which 
approximately conjugates one to the other in trace norm on a prescribed finite set,
with this approximation not depending on $d$. Consequently $s(G)\leq 1$.
\end{proof}

The above theorem shows that all subsets of an amenable group are approximation regular, 
since amenability passes to subgroups.

\section{Group actions}\label{S-group actions}

Throughout this section and the next $G$ denotes a countable discrete group
and $(X,\mu )$ a standard probability space,
which are arbitrary unless otherwise specified.
Let $\alpha$ a measure-preserving action of $G$
on $(X,\mu )$. The notation $\alpha$ will actually be reserved
for the induced action of $G$ on $L^\infty (X,\mu)$, so that $\alpha_s (f)(x) = f(s^{-1} x)$ 
for $s\in G$, $f\in L^\infty (X,\mu )$, and $x\in X$, with concatenation being used for
the action on $X$. For a set of projections $\cP\subseteq L^\infty (X,\mu )$ 
and a nonempty finite set $F\subseteq G$, we write $\cP_F$ for the
set of the projections of the form $\prod_{s\in F} \alpha_s (p_s )$ where $p_s \in \cP$. 
We say that a subset $\Omega$ of $L^\infty (X,\mu )$ is 
{\it dynamically generating} if the set $\bigcup_{s\in G} \{ \alpha_s (a) : a\in\Omega \}$ 
generates $L^\infty (X,\mu )$ as a von Neumann algebra.
In the case that $\Omega$ is a partition of unity consisting of projections this is the same
as the underlying partition of $X$ being generating for the action.
 
We write $s(G,X)$ for $s(\sG )$, $\slower (G,X)$ for $\slower (\sG )$, and $I_{G,X}$ for $I_\sG$, 
where $\sG$ is the p.m.p.\ groupoid associated to the action. For a group element $s$ we write $u_s$ 
for the corresponding element in $I_{G,X}$. We say that the action is {\it approximation regular} if 
$s(G,X) = \slower (G,X)$.

For the purpose of working with $s(G,X)$ and $\slower (G,X)$ it is often more convenient 
to handle the group and space components separately as follows.
Let $\sigma$ be a map from $G$ to $S_d$ for some $d\in\Nb$.
The image $\sigma_s$ of a group element $s$ under $\sigma$ will usually be interpreted 
as a permutation matrix in $M_d$. Viewed as such, $\sigma_s$ gives rise to an automorphism 
$\Ad \sigma_s$ of $\Cb^d$ as identified with the algebra $\diag (M_d )$ of diagonal matrices in $M_d$.
Let $F$ be a nonempty finite subset of $G$ and $\delta > 0$.
Recall from the previous section that $\GA (F,n,\delta ,d)$ denotes the set of all identity-preserving maps 
$\sigma : G \to S_d$ such that $\| \sigma_{s_1 ,\dots ,s_n} - \sigma_{s_1} \cdots\sigma_{s_n} \|_2 < \delta$ 
for all $(s_1 , \dots s_n )\in (F\cup F^* \cup \{ e \} )^{\times n}$ and $\tr (\sigma_s ) < \delta$ for all 
$s\in (F\cup F^* \cup \{ e \} )^n \setminus \{ e \}$.
Let $\cP$ be a finite set of projections in $L^\infty (X,\mu )$.
Write $\HA (F,\cP ,n,\delta ,d)$ for the set of all pairs $(\sigma , \varphi )$
where $\sigma\in\GA (F,n,\delta ,d)$ and $\varphi$ is a unital homomorphism from 
$\spn (\cP_{(F\cup F^* \cup \{ e \} )^n} )$ to
$\Cb^d = \diag (M_d )$ satisfying
\begin{enumerate}
\item[(i)] $| \tr\circ\varphi (p) - \mu (p) | < \delta$ for all $p\in\cP_{(F\cup F^* \cup \{ e \})^n}$,

\item[(ii)] $\| \varphi\circ\alpha_s (p) - \Ad\sigma_s \circ\varphi (p) \|_2 < \delta$ for all $p\in\cP$ 
and $s\in (F\cup F^* \cup \{ e \})^n$.
\end{enumerate}
Given sets $A_1 , A_2 , B_1 , B_2 , Z_1 , Z_2$ and a collection $\sY$
of ordered pairs consisting of maps $A_1 \to Z_1$ and $A_2 \to Z_2$,
we write $|\sY |_{B_1 , B_2}$ for the cardinality of the set of pairs 
$(\sigma |_{B_1 \cap A_1} , \varphi |_{B_2 \cap A_2} )$
where $(\sigma , \varphi )\in\sY$.
For a finite set $E\subseteq G$ and a finite set $\cQ$ of projections in $L^\infty (X,\mu )$ we set
\begin{align*}
s_{E,\cQ} (F,\cP ,n,\delta ) &= \limsup_{d\to\infty} \frac{1}{d \log d} \log |\HA (F,\cP ,n ,\delta ,d)|_{E,\cQ} , \\
s_{E,\cQ} (F,\cP ,n) &= \inf_{\delta > 0} s_{E,\cQ} (F,\cP ,n,\delta ) , \\
s_{E,\cQ} (F,\cP ) &= \inf_{n\in\Nb} s_{E,\cQ} (F,\cP ,n) .
\end{align*}
and
\begin{align*}
\slower_{E,\cQ}  (F,\cP ,n,\delta ) &= \liminf_{d\to\infty} \frac{1}{d \log d} \log |\HA (F,\cP ,n ,\delta ,d)|_{E,\cQ} , \\
\slower_{E,\cQ}  (F,\cP ,n) &= \inf_{\delta > 0} \slower_{E,\cQ}  (F,\cP ,n,\delta ) , \\
\slower_{E,\cQ}  (F,\cP ) &= \inf_{n\in\Nb} \slower_{E,\cQ}  (F,\cP ,n) .
\end{align*}
A simple approximation argument shows the following.

\begin{proposition}\label{P-pou}
Let $E,F \subseteq G$ be finite sets and let $\cQ$ be a finite set of projections in $L^\infty (X,\mu )$.
Let $\cP$ be a finite partition of unity in $L^\infty (X,\mu )$ consisting of projections. 
Then the set $\tilde{\cP}$ of all projections in the $^*$-subalgebra spanned by $\cP$ satisfies
\[
s_{E,\cQ} (F,\tilde{\cP} ) = s_{E,\cQ} (F,\cP ) .
\]
\end{proposition}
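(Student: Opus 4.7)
The approach is to show that the sets $\HA(F,\cP,n,\delta,d)$ and $\HA(F,\tilde{\cP},n,\delta,d)$ essentially coincide up to a rescaling of $\delta$ by a constant independent of $d$, so that the infimum over $\delta$ in the definition of $s_{E,\cQ}$ erases the difference. The key observation is that, since $\cP$ is a partition of unity consisting of pairwise orthogonal projections, every element of $\tilde{\cP}$ equals $\sum_{p\in S}p$ for some $S\subseteq\cP$. Writing $K=(F\cup F^*\cup\{e\})^n$, by distributing the product $\prod_{s\in K}\alpha_s(\cdot)$ over such sums one sees that every element of $\tilde{\cP}_K$ is a sum of at most $|\cP|^{|K|}$ elements of $\cP_K$. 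In particular the linear spans $\spn(\cP_K)$ and $\spn(\tilde{\cP}_K)$ coincide, so the domain on which $\varphi$ is required to be a unital homomorphism is the same in both definitions of $\HA$.

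With this in hand I would verify two inclusions. Since $\cP\subseteq\tilde{\cP}$, the conditions (i)--(ii) imposed by $\HA(F,\tilde{\cP},n,\delta,d)$ are a superset of those imposed by $\HA(F,\cP,n,\delta,d)$, so $\HA(F,\tilde{\cP},n,\delta,d)\subseteq\HA(F,\cP,n,\delta,d)$. Conversely, given $(\sigma,\varphi)\in\HA(F,\cP,n,\delta,d)$ and $\tilde{p}\in\tilde{\cP}_K$, expand $\tilde{p}$ as a sum of at most $|\cP|^{|K|}$ elements of $\cP_K$ and apply the triangle inequality to bound $|\tr\circ\varphi(\tilde{p})-\mu(\tilde{p})|$. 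Similarly, for $\tilde{p}=\sum_{p\in S}p\in\tilde{\cP}$ and $s\in K$, linearity of $\alpha_s$, $\varphi$, and $\Ad\sigma_s$ together with the triangle inequality give $\|\varphi\circ\alpha_s(\tilde{p})-\Ad\sigma_s\circ\varphi(\tilde{p})\|_2\leq|\cP|\delta$. Thus $\HA(F,\cP,n,\delta,d)\subseteq\HA(F,\tilde{\cP},n,C\delta,d)$ for a constant $C$ depending only on $|\cP|$, $|F|$, and $n$.

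Restricting to $(E,\cQ)$ preserves both inclusions, so
\[
|\HA(F,\tilde{\cP},n,\delta,d)|_{E,\cQ}\leq|\HA(F,\cP,n,\delta,d)|_{E,\cQ}\leq|\HA(F,\tilde{\cP},n,C\delta,d)|_{E,\cQ}.
\]
Applying $(d\log d)^{-1}\log(\cdot)$, $\limsup_{d\to\infty}$, and then $\inf_{\delta>0}$, the constant $C$ is absorbed by the infimum, yielding $s_{E,\cQ}(F,\cP,n)=s_{E,\cQ}(F,\tilde{\cP},n)$ for every $n$; an infimum over $n$ then gives the proposition. There is no serious obstacle here---the argument is the routine approximation indicated by the authors, and the only point to be careful about is that the multiplicative constant arising from the expansion of $\tilde{\cP}_K$ as sums of $\cP_K$ elements depends only on $|\cP|$, $|F|$, and $n$, not on $d$.
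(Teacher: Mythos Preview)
Your argument is correct and is precisely the ``simple approximation argument'' the paper alludes to without writing out: the paper gives no proof beyond that phrase, and the natural reading is exactly what you have done---use that every projection in $\tilde{\cP}$ is a subsum of the partition $\cP$, expand elements of $\tilde{\cP}_K$ as sums of at most $|\cP|^{|K|}$ elements of $\cP_K$, and absorb the resulting multiplicative constant into the infimum over $\delta$. Your care in noting that $\spn(\cP_K)=\spn(\tilde{\cP}_K)$ (so that the domain of $\varphi$, and hence the restriction $\varphi|_{\cQ\cap\spn(\cP_K)}$ used in the counting $|\cdot|_{E,\cQ}$, is unchanged) and that the constant $C$ depends only on $|\cP|$, $|F|$, $n$ is exactly what is needed.
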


\begin{proposition}\label{P-actions local}
Let $F$ be a finite symmetric subset of $G$ containing $e$
and let $\cP$ be a set consisting of the projections in some finite-dimensional unital $^*$-subalgebra $A$
of $L^\infty (X,\mu )$.
Let $E$ be a finite subset of $G$ and $\cQ$ a subset of $\cP$.
Then $s_{E\cup\cQ} (F\cup\cP ) = s_{E,\cQ} (F,\cP )$ and $\slower_{E\cup\cQ} (F\cup\cP ) = \slower_{E,\cQ} (F,\cP )$.
\end{proposition}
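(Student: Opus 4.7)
The plan is to establish both equalities by directly converting between sofic approximations $\psi \in \SA(F\cup\cP, n, \delta, d)$ and pairs $(\sigma, \varphi) \in \HA(F, \cP, n, \delta, d)$. The same construction and counting arguments yield both the $s$ and $\slower$ statements, so I focus on $s$.

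For $s_{E,\cQ}(F,\cP) \leq s_{E\cup\cQ}(F\cup\cP)$, the plan is to start from $(\sigma, \varphi) \in \HA(F,\cP,n,\delta,d)$ and manufacture a linear map $\psi_{\sigma,\varphi}\colon [I_{G,X}] \to M_d$ by setting $\psi_{\sigma,\varphi}(pu_s) = \varphi(p)\sigma_s$, where $p$ ranges over the minimal projections of the finite-dimensional subalgebra $B$ generated by $\bigcup_{t \in (F\cup F^* \cup\{e\})^n} \alpha_t(\cP)$ and $s$ over $(F\cup F^* \cup\{e\})^n$, and then extending arbitrarily to a linear map on $[I_{G,X}]$. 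The covariance relation $u_s p = \alpha_s(p) u_s$ normalizes any product in $(F\cup\cP\cup(F\cup\cP)^*)^{\leq n}$ to $pu_s$ form, after which approximate multiplicativity follows by iterating condition (ii) of $\HA$ together with the approximate multiplicativity of $\sigma$; the trace condition holds because $\tau(pu_s)$ equals $\mu(p)$ when $s = e$ and vanishes otherwise, while $\tr(\varphi(p)\sigma_s) \leq \tr(\sigma_s) < \delta$ for $s \neq e$. Since $\psi_{\sigma,\varphi}(u_s) = \sigma_s$ and $\psi_{\sigma,\varphi}(p) = \varphi(p)$ on the nose, distinct $(\sigma|_E, \varphi|_\cQ)$ yield distinct $\psi|_{E\cup\cQ}$, giving the required count estimate for suitable parameter choices.

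For the reverse inequality, the plan is to extract from $\psi \in \SA(F\cup\cP, n', \delta', d)$ a pair $(\sigma_\psi, \varphi_\psi) \in \HA(F,\cP,n,\delta,d)$: since $\tr(\psi(u_s u_s^*)) \approx 1$, the partial permutation $\psi(u_s)$ has support of proportion at least $1 - O(\delta')$, and I extend it canonically to a full permutation $\sigma_\psi(s)$; the approximately orthogonal family $\{\psi(p) : p \text{ minimal in } B\}$ of diagonal projections is rounded to an exact partition of unity defining a genuine unital homomorphism $\varphi_\psi\colon B \to \diag(M_d)$ with $\|\varphi_\psi(p) - \psi(p)\|_2$ uniformly small. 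Verification of the $\HA$ conditions for $(\sigma_\psi, \varphi_\psi)$ is then routine from the corresponding conditions on $\psi$ and the smallness of the perturbations.

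The main technical obstacle is controlling the fiber of $\psi \mapsto (\sigma_\psi|_E, \varphi_\psi|_\cQ)$: the number of $\psi$'s (modulo equality on $E\cup\cQ$) mapping to the same pair must be $d^{o(d\log d)}$. Since $\psi(u_s)$ agrees with $\sigma_\psi(s)$ away from a set of proportion $O(\delta')$ for $s \in E$, and $\psi(p)$ differs from $\varphi_\psi(p)$ by a small-$2$-norm diagonal element for $p \in \cQ$, Lemma~\ref{L-nbhd gd} applied coordinate by coordinate bounds each fiber by $d^{\kappa d}$ with $\kappa \to 0$ as $\delta' \to 0$. On passing to $\limsup$ this yields the asymptotic inequality $s_{E\cup\cQ}(F\cup\cP) \leq s_{E,\cQ}(F,\cP)$.
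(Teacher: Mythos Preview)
Your proposal is correct and follows essentially the same approach as the paper's proof: in one direction you build $\psi_{\sigma,\varphi}(pu_s)=\varphi(p)\sigma_s$ exactly as the paper does, and in the other you extract $(\sigma_\psi,\varphi_\psi)$ by extending the near-full partial permutations $\psi(u_s)$ to elements of $S_d$ and rounding the images $\psi(p)$ to a genuine partition of unity, with the fiber bound coming from Lemma~\ref{L-nbhd gd}. The only cosmetic imprecision is calling the $\psi(p)$ ``diagonal projections'' outright---they are partial permutation matrices that are only approximately self-adjoint and idempotent, and the paper spends a paragraph (its estimates (1) and (2), using the representation $p=\prod_i u_{s_i}p_{s_i}u_{s_i}^*$ and Lemma~\ref{L-adjoint}) to justify this before rounding; this is also why the paper takes $m>6|F^n|$ rather than $n'$ close to $n$.
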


\begin{proof}
First we show that $s_{E\cup\cQ} (F\cup\cP ) \geq s_{E,\cQ} (F,\cP )$ 
and $\slower_{E\cup\cQ} (F\cup\cP ) \geq \slower_{E,\cQ} (F,\cP )$.
Let $n\in\Nb$ and $\delta > 0$. 
Let $\delta' > 0$ be such that $3n\delta' < \delta$.
Let $d\in\Nb$.
Let $(\sigma , \varphi )\in\HA (F,\cP ,n,\delta' ,d)$.
Write $\tilde{\cP}$ for the subset of $\cP$ consisting of the minimal projections of $A$.
Define a map $\Phi_{\sigma ,\varphi} : I_\sG \to I_d$ 
by setting $\Phi_{\sigma ,\varphi} (pu_s ) = \varphi (p) \sigma_s$ for all $p\in\tilde{\cP}_{F^n}$ and $s\in G$,
extending linearly, and then extending arbitrarily to all of $I_\sG$. 
Note in particular that $\Phi_{\sigma ,\varphi} (1) = 1$
since $\sigma$ is identity-preserving and $\varphi$ is unital. We will show 
that $\Phi_{\sigma ,\varphi}\in\SA (F\cup\cP ,n,\delta ,d)$.

Let $p_1 , \dots p_n \in\cP$ and $s_1 , \dots s_n \in F$. 
Then
\[
\bigg( \prod_{i=1}^n \varphi (p_i ) \sigma_{s_i} \bigg) \sigma_{s_n}^{-1} \cdots \sigma_{s_1}^{-1}
= \prod_{i=1}^n (\Ad \sigma_{s_1} \cdots\sigma_{s_{i-1}} )(\varphi (p_i ))
\]
and so by untelescoping to estimate the difference of products we obtain
\begin{align*}
\lefteqn{\bigg\| \varphi \bigg( \prod_{i=1}^n \alpha_{s_1 \cdots s_{i-1}} (p_i )\bigg) 
- \bigg( \prod_{i=1}^n \varphi (p_i ) \sigma_{s_i} \bigg) 
\sigma_{s_n}^{-1} \cdots \sigma_{s_1}^{-1} \bigg\|_2}\hspace*{15mm} \\
\hspace*{15mm} &\leq \sum_{i=2}^n \big( \| \varphi (\alpha_{s_1 \cdots s_{i-1}} (p_i )) 
- \Ad\sigma_{s_1 \cdots s_{i-1}} (\varphi (p_i )) \|_2 \\
&\hspace*{20mm} \ + \| (\Ad\sigma_{s_1 \cdots s_{i-1}} - \Ad\sigma_{s_1} \cdots\sigma_{s_{i-1}} )(\varphi (p_i )) \|_2 \big) \\
&< (n-1)\delta' + 2\sum_{i=2}^n \| \sigma_{s_1 \cdots s_{i-1}} - \sigma_{s_1} \cdots\sigma_{s_{i-1}} \|_2 \\
&< 3(n-1)\delta' .
\end{align*}
Since 
$\prod_{i=1}^n p_i u_{s_i} = (\prod_{i=1}^n \alpha_{s_1 \cdots s_{i-1}} (p_i ))u_{s_1 \cdots s_n}$, 
it follows that
\begin{align*}
\lefteqn{\bigg\| \Phi_{\sigma ,\varphi} \bigg( \prod_{i=1}^n p_i u_{s_i} \bigg) 
- \prod_{i=1}^n \Phi_{\sigma ,\varphi} (p_i ) \Phi_{\sigma ,\varphi} (u_{s_i} ) \bigg\|_2}\hspace*{20mm} \\ 
\hspace*{20mm} &= \bigg\| \varphi \bigg( \prod_{i=1}^n \alpha_{s_1 \cdots s_{i-1}} (p_i )\bigg) 
\sigma_{s_1 \cdots s_n} - \prod_{i=1}^n \varphi (p_i ) \sigma_{s_i} \bigg\|_2 \\
\hspace*{20mm} &= \bigg\| \varphi \bigg( \prod_{i=1}^n \alpha_{s_1 \cdots s_{i-1}} (p_i )\bigg) 
(\sigma_{s_1 \cdots s_n} - \sigma_{s_1} \cdots \sigma_{s_n} ) \bigg\|_2 \\
&\hspace*{15mm} \ + \bigg\| \varphi \bigg( \prod_{i=1}^n \alpha_{s_1 \cdots s_{i-1}} (p_i )\bigg) 
- \bigg( \prod_{i=1}^n \varphi (p_i ) \sigma_{s_i} \bigg) \sigma_{s_n}^{-1} \cdots \sigma_{s_1}^{-1} \bigg\|_2 \\
&< \| \sigma_{s_1 \cdots s_n} - \sigma_{s_1} \cdots \sigma_{s_n} \|_2 + 3(n-1)\delta' < (3n-2)\delta' < \delta .
\end{align*}
Since $1\in\cP$ and $e\in F$ this shows that 
$\| \Phi_{\sigma ,\varphi} (a_1 \cdots a_k ) 
- \Phi_{\sigma ,\varphi} (a_1 )\cdots\Phi_{\sigma ,\varphi} (a_k ) \| < \delta$
for all $k=1,\dots ,n$ and $(a_1 , \dots , a_k )\in (F\cup\cP )^{\times k}$.
Note also that if $s_1 \cdots s_n = e$ then, since $\prod_{i=1}^n \alpha_{s_1 \cdots s_{i-1}} (p_i ) \in\cP_{F^n}$,
\begin{align*}
\bigg| \tr\circ\Phi_{\sigma ,\varphi} \bigg( \prod_{i=1}^n p_i u_{s_i} \bigg) 
- \tau \bigg( \prod_{i=1}^n p_i u_{s_i} \bigg) \bigg|
= \bigg| (\tr\circ\varphi - \mu )\bigg( \prod_{i=1}^n \alpha_{s_1 \cdots s_{i-1}} (p_i )\bigg) \bigg| 
< \delta' < \delta ,
\end{align*}
while if $s_1 \cdots s_n \neq e$ then
\begin{align*}
\bigg| \tr\circ\Phi_{\sigma ,\varphi} \bigg( \prod_{i=1}^n p_i u_{s_i} \bigg) - \tau \bigg( \prod_{i=1}^n p_i u_{s_i} \bigg) \bigg|
&= \bigg| \tr \bigg( \varphi \bigg( \prod_{i=1}^n \alpha_{s_1 \cdots s_{i-1}} (p_i )\bigg) \sigma_{s_1 \cdots s_n} \bigg) \bigg| \\
&\leq \tr (\sigma_{s_1 \cdots s_n} ) 
< \delta' < \delta .
\end{align*}
Since $1\in\cP$ and $e\in F$, this shows that $|\tr\circ\Phi_{\sigma ,\varphi} (a) - \tau (a) | < \delta$ 
for all $a\in (F\cup\cP )^{\leq n}$. 
We have thus verified that $\Phi_{\sigma ,\varphi}\in\SA (F\cup\cP ,n,\delta ,d)$.
Since for any $(\sigma ,\varphi ),(\omega ,\psi )\in\HA (F ,\cP ,n,\delta' ,d)$
such that $(\sigma |_E ,\varphi |_{\cQ} )$ and $(\omega |_E ,\psi |_{\cQ} )$ are distinct
the restrictions of $\Phi_{\sigma ,\varphi}$ and $\Phi_{\omega ,\psi}$ to $E\cup\cQ$ are distinct, 
it follows that
\[
|\SA (F\cup\cP ,n,\delta ,d)|_{E\cup\cQ} \geq |\HA (F,\cP ,n,\delta' ,d)|_{E,\cQ} ,
\]
from which we infer that $s_{E\cup\cQ} (F\cup\cP ) \geq s_{E,\cQ} (F,\cP )$ 
and $\slower_{E\cup\cQ} (F\cup\cP ) \geq \slower_{E,\cQ} (F,\cP )$. 

To prove the reverse inequalities, let $n\in\Nb$, and 
let $m$ be an integer larger than $6|F^n |$. 
Let $\delta'$ be a positive number smaller than $\delta / (5+4n)$, $\delta /(2|\cP |^{|F|^m})$, and $\delta /36$,
to be further specified. Let $d\in\Nb$.
Let $\Phi\in\SA (F\cup\cP ,m,\delta' ,d)$. Let $s_1 , \dots , s_k$ be a enumeration of the elements
of $F^n$, for the purpose of indexing noncommutative products below.
Given a $p\in\cP_{F^n}$, writing $p=\prod_{i=1}^k u_{s_i} p_{s_i} u_{s_i}^*$ 
where the $p_{s_i}$ are projections in $\cP$, we have by Lemma~\ref{L-adjoint}
\begin{align*}
\lefteqn{\bigg\| \prod_{i=1}^k \Phi (u_{s_i}^* )^* \Phi (p_{s_i} )^* \Phi (u_{s_i} )^* 
- \prod_{i=1}^k \Phi (u_{s_i} )\Phi (p_{s_i} )\Phi (u_{s_i}^* ) \bigg\|_2}\hspace*{15mm} \\
\hspace*{15mm} &\leq \sum_{i=1}^k \big( \| \Phi (u_{s_i}^* )^* - \Phi (u_{s_i} ) \|_2 
+ \| \Phi (p_{s_i} )^* - \Phi (p_{s_i} ) \|_2 + \| \Phi (u_{s_i} )^* - \Phi (u_{s_i}^* ) \|_2 \big) \\
&\leq 12|F^n |\delta'
\end{align*}
and thus, using the fact that $m > 6|F^n |$, 
\begin{align*}\tag{1}
\lefteqn{\| \Phi (p)^* \Phi (p) - \Phi (p) \|_2}\hspace*{15mm} \\
\hspace*{15mm} &\leq \bigg\| \bigg(\Phi (p) 
- \prod_{i=1}^k \Phi (u_{s_{k-i+1}} )\Phi (p_{s_{k-i+1}} )\Phi (u_{s_{k-i+1}}^* )\bigg)^* \Phi (p) \bigg\|_2 \\
&\hspace*{10mm} \ + \bigg\| \bigg( \prod_{i=1}^k \Phi (u_{s_i}^* )^* \Phi (p_{s_i} )^* \Phi (u_{s_i} )^* 
- \prod_{i=1}^k \Phi (u_{s_i} )\Phi (p_{s_i} )\Phi (u_{s_i}^* ) \bigg) \Phi (p) \bigg\|_2 \\
&\hspace*{10mm} \ + \bigg\| \bigg( \prod_{i=1}^k \Phi (u_{s_i} )\Phi (p_{s_i} )\Phi (u_{s_i}^* ) \bigg)
\bigg(\Phi (p) - \prod_{i=1}^k \Phi (u_{s_i} )\Phi (p_{s_i} )\Phi (u_{s_i}^* )\bigg) \bigg\|_2 \\
&\hspace*{10mm} \ + \bigg\| \prod_{i=1}^k \Phi (u_{s_i} )\Phi (p_{s_i} )\Phi (u_{s_i}^* )
\prod_{i=1}^k \Phi (u_{s_i} )\Phi (p_{s_i} )\Phi (u_{s_i}^* ) - \Phi (p^2 ) \bigg\|_2 \\
&< (3+12|F^n |)\delta' .
\end{align*}
For $p,q\in\cP_{F^n}$ we have, by a similar estimate again using the fact that $m > 6|F^n |$,
\begin{align*}\tag{2}
\| \Phi (pq) - \Phi (p) \Phi (q) \|_2 < 3\delta' .
\end{align*}
As before, write $\tilde{\cP}$ for the subset of $\cP$ consisting of the minimal projections in $A$.
Pick a $p_0 \in \tilde{\cP}_{F^n}$. Since $\Phi (p)^* \Phi (p)$ is a projection in $\diag (M_d )$
for every $p\in\tilde{\cP}_{F^n}$, it follows from (1) and (2) and a straightforward perturbation argument
that we can find pairwise orthogonal projections
$\varphi_\Phi (p) \in\Cb^d \cong\diag (M_d )$ for $p\in\tilde{\cP}_{F^n} \setminus \{ p_0 \}$ such that 
$\| \varphi_\Phi (p) - \Phi (p) \|_2$ is as small as we wish for every $p\in\tilde{\cP}_{F^n} \setminus \{ p_0 \}$
granted that $\delta'$ is taken small enough. 
Setting $\varphi_\Phi (p_0 ) = 1 - \sum_{p\in\tilde{\cP}_{F^n} \setminus \{ p_0 \}} \varphi_\Phi (p)$
and extending linearly we obtain a unital homomorphism $\varphi_\Phi : \spn (\cP_{F^n} )\to \Cb^d$,
and by taking $\delta'$ small enough we can ensure that
$\| \varphi_\Phi (p) - \Phi (p) \|_2 < \delta /(3n)$ 
for every projection $p$ in the linear span of $\cP_{F^n}$.
For $s\in F^n \setminus\{ e \}$ the partial isometry $\Phi (u_s )$ satisfies 
$\| \Phi (u_s^* ) - \Phi (u_s )^* \|_2 < 3\delta'$ by Lemma~\ref{L-adjoint} and hence
\begin{align*}
\| \Phi (u_s )^* \Phi (u_s ) - 1 \|_2 &\leq \| (\Phi (u_s )^* - \Phi (u_s^* ))\Phi (u_s ) \|_2 
+ \| \Phi (u_s^* )\Phi (u_s ) - \Phi (u_s^* u_s ) \|_2 \\
&< 4\delta' ,
\end{align*}
which means that we can construct a permutation matrix $\sigma_{\Phi ,s} \in S_d$ such that 
$\| \sigma_{\Phi ,s} - \Phi (u_s ) \|_2 < 4\delta'$. 
For all other $s\in G$ we set $\sigma_{\Phi ,s} = 1$, giving us a map $\sigma_\Phi : G\to S_d$.
For all $(s_1 , \dots ,s_n )\in F^{\times n}$ we have 
\begin{align*}
\lefteqn{\| \sigma_{\Phi ,s_1 \cdots s_n} - \sigma_{\Phi ,s_1} \cdots \sigma_{\Phi ,s_n} \|_2}\hspace*{15mm} \\
\hspace*{15mm} &\leq \| \sigma_{\Phi ,s_1 \cdots s_n} - \Phi (u_{s_1 \cdots s_n} ) \|_2 \\
&\hspace*{10mm} \ + \| \Phi (u_{s_1} \cdots u_{s_n} ) - \Phi (u_{s_1} ) \cdots \Phi (u_{s_n} ) \|_2 \\
&\hspace*{10mm} \ + 
\sum_{i=1}^n \| \sigma_{\Phi ,s_1} \cdots \sigma_{\Phi ,s_{i-1}} (\Phi (u_{s_i} ) -\sigma_{\Phi ,s_i} )
\Phi (u_{s_{i+1}}) \cdots \Phi (u_{s_n} )\|_2 \\
&< (5 + 4n)\delta' < \delta
\end{align*}
while for
$s\in F^n \setminus \{ e \}$ we have 
\[
\tr (\sigma_{\Phi ,s} ) =  \tr (\sigma_{\Phi ,s} - \Phi (u_s )) + \tr (\Phi (u_s )) < 5\delta' < \delta ,
\]
so that $\sigma_\Phi \in\GA (F,n,\delta ,d)$. 

For $p\in\cP_{F^n}$ we have,
since $\cP_{F^n}\subseteq (F\cup\cP )^m$,
\begin{align*}
| \tr\circ\varphi_\Phi (p) - \mu (p) | 
&\leq | \tr (\varphi_\Phi (p) - \Phi (p)) | + | \tr\circ\Phi (p) - \tau (p) | \\
&\leq \| \varphi_\Phi (p) - \Phi (p) \|_2 + \delta' < \frac{\delta}{2} + \frac{\delta}{2} = \delta .
\end{align*}
Note also that for $p\in\cP$ and $s\in F^n$ we have, using Lemma~\ref{L-adjoint},
\begin{align*}
\lefteqn{\| \varphi_\Phi \circ\alpha_s (p) - \Ad\sigma_{\Phi ,s} \circ\varphi_\Phi (p) \|_2}\hspace*{15mm} \\
\hspace*{10mm} &\leq \| \varphi_\Phi (u_s p u_s^* ) - \Phi (u_s p u_s^* ) \|_2 
+ \| \Phi (u_s p u_s^* ) - \Phi (u_s )\Phi (p)\Phi (u_s^* ) \|_2 \\
&\hspace*{10mm} \ + \| (\Phi (u_s ) - \sigma_{\Phi ,s} )\Phi (p)\Phi (u_s^* ) \|_2 
+ \| \sigma_{\Phi ,s} (\Phi (p) - \varphi_\Phi (p))\Phi (u_s^* ) \|_2 \\
&\hspace*{10mm} \ + \| \sigma_{\Phi ,s} \varphi_\Phi (p) (\Phi (u_s^* ) - \Phi (u_s )^* ) \|_2 
+ \| \sigma_{\Phi ,s} \varphi_\Phi (p) (\Phi (u_s ) - \sigma_{\Phi ,s} )^* \|_2 \\
&< \frac{\delta}{3} + \delta' + 4\delta' + \frac{\delta}{3} + 3\delta' + 4\delta' < \delta .
\end{align*}
Thus $(\sigma_\Phi ,\varphi_\Phi )\in\HA (F,\cP ,n,\delta ,d)$.

It is clear from the above construction of $\sigma_\Phi$ and $\varphi_\Phi$ for each 
$\Phi\in\SA (F\cup\cP ,m,\delta' ,d)$
that we can find a small enough $\eps > 0$ not depending on $d$ with $\eps\to 0$ 
as $\delta\to 0$ such that 
for any $\Phi , \Psi\in\SA (F\cup\cP ,m,\delta' ,d)$ satisfying $\rho_{E\cup\cQ} (\Phi , \Psi ) \geq \varepsilon$ 
the pairs $(\sigma_\Phi |_E ,\varphi_\Phi |_{\cQ} )$ and $(\sigma_\Psi |_E ,\varphi_\Psi |_{\cQ} )$ are distinct.
Therefore
\[
d^{-\kappa d} |\SA (F\cup\cP ,m,\delta' ,d)|_{E\cup\cQ} \leq |\HA (F,\cP ,n,\delta ,d)|_{E,\cQ}
\]
for some $\kappa > 0$ with $\kappa\to 0$ as $\varepsilon\to 0$, by Lemma~\ref{L-nbhd gd}.
Letting $\delta\to 0$ we obtain
$s_{E\cup\cQ} (F\cup\cP) \leq s_{E,\cQ} (F,\cP )$ and $\slower_{E\cup\cQ} (F\cup\cP) \leq \slower_{E,\cQ} (F,\cP )$,
yielding the proposition.
\end{proof}

The next result is a consequence of Theorem~\ref{T-generating}, Proposition~\ref{P-pou}, and 
Proposition~\ref{P-actions local}. 
Note that $L^\infty (X,\mu )$ can be written as the $L^2$ closure of a increasing sequence
of finite-dimensional unital $^*$-subalgebras, 
and the set of nonzero projections in the union of such a sequence is dynamically generating.

\begin{proposition}\label{P-actions}
Let $G\curvearrowright (X,\mu )$ be a measure-preserving action. Let $\Omega$ be a generating subset of $G$
and $M$ a dynamically generating $^*$-subalgebra of $L^\infty (X,\mu )$. Then
\begin{align*}
s(G,X) &=  \sup_E \sup_\cQ \inf_F \inf_\cP s_{E,\cQ} (F,\cP ) ,\\
\slower (G,X) &= \sup_E \sup_\cQ \inf_F \inf_\cP \slower_{E,\cQ} (F,\cP )
\end{align*}
where in both lines $E$ and $F$ run over the finite subsets of $\Omega$ and $\cP$ and $\cQ$
run over the finite partitions of unity in $M$ consisting of projections. 
In particular, if $F$ is a finite generating subset of $G$
and $\cP$ a dynamically generating finite partition of unity in $L^\infty (X,\mu )$ consisting of projections then
\begin{align*}
s(G,X) &= s_{F,\cP} (F, \cP ) ,\\
\slower (G,X) &= \slower_{F,\cP} (F, \cP ) .
\end{align*}
\end{proposition}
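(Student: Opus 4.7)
The plan is to combine Theorem~\ref{T-generating}, Proposition~\ref{P-pou}, and Proposition~\ref{P-actions local} via a cofinality argument, using the hint preceding the proposition.

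First I would fix an increasing sequence $A_1\subseteq A_2\subseteq\cdots$ of finite-dimensional unital $^*$-subalgebras of $M$ whose union is $L^2$-dense in $M$, and let $\cP_0$ be the union of the (finite) sets $\cP^{(n)}$ of all projections in the $A_n$. Then $\cP_0$ dynamically generates $L^\infty(X,\mu)$, so by the remark following Definition~\ref{D-gen}, $\Omega\cup\cP_0$ is a generating subset of $I_{G,X}$. Theorem~\ref{T-generating} then yields $s(G,X)=s(\Omega\cup\cP_0)$ and $\slower(G,X)=\slower(\Omega\cup\cP_0)$.

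By definition $s(\Omega\cup\cP_0)=\sup_{E'}\inf_{F'}s_{E'}(F')$ with $E',F'$ ranging over finite subsets of $\Omega\cup\cP_0$. Each such set lies inside some $E\cup\cP^{(n)}$ with $E\subseteq\Omega$ finite, and conversely each $E\cup\cP^{(n)}$ is itself finite. Using the monotonicity of $s_{E'}(F')$---increasing in $E'$, decreasing in $F'$---cofinality gives
\[
s(G,X)=\sup_{E,n}\inf_{F,m}s_{E\cup\cP^{(n)}}\bigl(F\cup\cP^{(m)}\bigr),
\]
where $F$ may be taken symmetric and containing $e$ (a harmless enlargement). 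Proposition~\ref{P-actions local} now converts the right-hand side to $\sup_{E,n}\inf_{F,m}s_{E,\cP^{(n)}}(F,\cP^{(m)})$. Proposition~\ref{P-pou} then allows me to replace $\cP^{(m)}$ in the second slot by the partition $\cP^{(m)}_{\min}$ of minimal projections in $A_m$, while the fact that each $\varphi$ appearing in the definition of $\HA$ is a unital homomorphism means that its restriction to $\cP^{(n)}$ is entirely determined by its restriction to $\cP^{(n)}_{\min}$, so $|\HA(\cdot)|_{E,\cP^{(n)}}=|\HA(\cdot)|_{E,\cP^{(n)}_{\min}}$ and I may likewise replace $\cP^{(n)}$ in the subscript slot by $\cP^{(n)}_{\min}$.

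A further cofinality argument, in which the chain $\{A_n\}$ is enlarged if necessary to include any prescribed finite-dimensional unital $^*$-subalgebra of $M$, shows that as $n$ varies $\cP^{(n)}_{\min}$ ranges cofinally (under refinement) through the finite partitions of unity in $M$ consisting of projections. Combined with the monotonicity of $s_{E,\cQ}(F,\cP)$ in the refinement orderings of $\cQ$ (increasing) and $\cP$ (decreasing), this reindexing delivers the asserted formula for $s(G,X)$; the argument for $\slower(G,X)$ is identical. For the ``in particular'' statement, I apply the main formula with $\Omega=F$ and $M=\spn(\cP)$, which is finite-dimensional and dynamically generating because $\cP$ is; the finest partition of unity in $\spn(\cP)$ is $\cP$ itself, so by the same monotonicity the outer sup and inner inf are attained at $(E,\cQ)=(F,\cP)$ and $(F',\cP')=(F,\cP)$, respectively. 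The main obstacle I expect is the bookkeeping of monotonicity directions and cofinality claims; in particular it will be important to distinguish clearly between the equality $|\HA|_{E,\cP^{(n)}}=|\HA|_{E,\cP^{(n)}_{\min}}$ (a consequence of the homomorphism property, valid in the subscript slot) and the appeal to Proposition~\ref{P-pou} (valid only in the approximation slot $\cP$).
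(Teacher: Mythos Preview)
Your approach is the same as the paper's, which simply cites Theorem~\ref{T-generating}, Proposition~\ref{P-pou}, and Proposition~\ref{P-actions local} without further detail; your assembly of these three ingredients via cofinality and monotonicity is the intended argument, and your handling of the ``in particular'' clause is correct.

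There is, however, one genuine imprecision in your cofinality step. Having fixed a countable chain $\{A_n\}$, you cannot ``enlarge it if necessary'' to make $\{\cP^{(n)}_{\min}\}$ cofinal under refinement among \emph{all} finite partitions of unity in $M$: when $M$ is infinite-dimensional there are uncountably many such partitions, and no countable chain is cofinal. The clean fix is to dispense with the chain altogether and apply Theorem~\ref{T-generating} directly to the generating set $\Omega\cup\{\text{all projections in }M\}$. Finite subsets of this set are cofinal with sets of the form $E\cup\cP_A$ where $E\subseteq\Omega$ is finite and $\cP_A$ is the set of all projections in a finite-dimensional unital $^*$-subalgebra $A\subseteq M$; since such $A$ are in bijection with finite partitions of unity in $M$ (via $A\mapsto\cP_{A,\min}$), your applications of Proposition~\ref{P-actions local}, Proposition~\ref{P-pou}, and the homomorphism argument for the subscript then land you directly on the desired $\sup_{E,\cQ}\inf_{F,\cP}$ formula, with the restriction $\cP\succeq\cQ$ removed at the end by the common-refinement cofinality you already invoke.
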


\begin{proposition}\label{P-action upper}
Let $G\curvearrowright (X,\mu )$ be a measure-preserving action.
Then $s(G,X) \leq s(G)$ and $\slower (G,X) \leq \slower (G)$.
\end{proposition}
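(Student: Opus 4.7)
The plan is to use Proposition~\ref{P-actions} to rewrite $s(G,X)$ in the split form
\[
s(G,X) = \sup_E \sup_\cQ \inf_F \inf_\cP s_{E,\cQ}(F,\cP)
\]
with $E, F$ ranging over finite subsets of $G$ and $\cP, \cQ$ over finite partitions of unity in $L^\infty(X,\mu)$ consisting of projections, and to compare this term-by-term with the group formula $s(G) = \sup_E \inf_F s_E(F)$, where we may use the $\GA$-reformulation of $s_E(F)$ recorded just before Proposition~\ref{P-sofic inf group}. The point is that forgetting the algebra part of a pair $(\sigma,\varphi)\in\HA(F,\cP,n,\delta,d)$ yields an element $\sigma\in\GA(F,n,\delta,d)$, so there is a forgetful map whose fibers we need only estimate crudely.

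The main estimate is the following. Fix finite $E\subseteq G$ and a finite set $\cQ$ of projections in $L^\infty(X,\mu)$. For any finite $F\supseteq E$ in $G$, any finite $\cP\supseteq\cQ$, any $n\in\Nb$, and any $\delta>0$, the restriction map $(\sigma,\varphi)\mapsto (\sigma|_E,\varphi|_\cQ)$ factors through the forgetful map to $\GA$. When $\sigma|_E$ is specified, the remaining freedom lies in $\varphi|_\cQ$. But each value $\varphi(q)$, for $q\in\cQ$, is a projection in $\diag(M_d)\cong\Cb^d$, of which there are $2^d$; thus the number of possible restrictions $\varphi|_\cQ$ is bounded by $2^{d|\cQ|}$. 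Consequently
\[
|\HA(F,\cP,n,\delta,d)|_{E,\cQ}\leq 2^{d|\cQ|}\,|\GA(F,n,\delta,d)|_E .
\]
Dividing by $d\log d$ and letting $d\to\infty$, the factor $2^{d|\cQ|}$ contributes at most $|\cQ|\log 2/\log d$, which vanishes. Taking $\limsup$, then $\inf_\delta$ and $\inf_n$, yields $s_{E,\cQ}(F,\cP)\leq s_E(F)$; the identical argument with $\liminf$ gives $\slower_{E,\cQ}(F,\cP)\leq \slower_E(F)$.

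Now one takes infima and suprema in the order dictated by Proposition~\ref{P-actions}. Since the right-hand side $s_E(F)$ does not involve $\cP$ or $\cQ$, passing to $\inf_\cP$ and $\sup_\cQ$ is harmless, and then
\[
\sup_E\sup_\cQ\inf_F\inf_\cP s_{E,\cQ}(F,\cP)\leq \sup_E\inf_F s_E(F)=s(G),
\]
so $s(G,X)\leq s(G)$. The $\slower$ inequality follows by the parallel chain of estimates with the limit infimum throughout.

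The argument requires essentially no new technical input; there is no real obstacle, only the routine verifications that the $\GA$-formulation of $s_E(F)$ is what we need on the right (supplied by the identity recorded just before Proposition~\ref{P-sofic inf group}) and that the crude $2^{d|\cQ|}$ bound on the algebra part is absorbed by the $1/(d\log d)$ normalization. If one wanted a sharper counting, one could note that $\varphi$ is a unital homomorphism on the commutative $^*$-algebra generated by $\cQ$ and so is determined by an ordered partition of $\{1,\dots,d\}$ with at most $2^{|\cQ|}$ blocks, but this refinement is unnecessary.
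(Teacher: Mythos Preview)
Your proof is correct and follows essentially the same approach as the paper: both argue via the forgetful map $(\sigma,\varphi)\mapsto\sigma$ from $\HA(F,\cP,n,\delta,d)$ to $\GA(F,n,\delta,d)$ and bound the number of possible $\varphi|_{\cQ}$ crudely, then observe that this bound is subexponential in $d\log d$. The only cosmetic difference is that the paper uses the bound $|\cQ|^d$ (exploiting that $\cQ$ is a partition of unity and $\varphi$ a unital homomorphism, so $\varphi|_\cQ$ amounts to a $|\cQ|$-coloring of $\{1,\dots,d\}$) whereas you use $2^{d|\cQ|}$; both are absorbed by the normalization.
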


\begin{proof}
Let $E$ and $F$ be finite subsets of $G$ and $\cP$ and $\cQ$ finite partitions of unity in 
$L^\infty (X,\mu )$ consisitng of projections.
Let $n\in\Nb$, and $\delta > 0$. Let $d\in\Nb$. 
The number of restrictions $\varphi |_\cQ$ where $\varphi$ is a unital homomorphism from $\spn (\cP_{F_n} )$
to $\Cb^d$ is at most $|\cQ |^d$. Therefore
\begin{align*}
|\HA (F,\cP ,n,\delta ,d)|_{E,\cQ} \leq |\cQ |^d |\SA (F,n,\delta ,d)|_E ,
\end{align*}
from which we deduce that $s_{E,\cQ} (F,\cP ) \leq s_E (F)$ and $\slower_{E,\cQ} (F,\cP ) \leq \slower_E (F)$.
Now apply Proposition~\ref{P-actions} to obtain the result.
\end{proof}

\begin{theorem}
Let $(Y,\nu )$ be a probability space with $Y$ finite and let 
$G\curvearrowright (X,\mu ) = (Y,\nu )^G$ be the Bernoulli action. Then $s(G,X) = s(G)$
and $\slower (G,X) = \slower (G)$.
\end{theorem}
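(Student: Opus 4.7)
The plan is to combine Proposition~\ref{P-action upper}, which supplies the upper bounds $s(G,X)\leq s(G)$ and $\slower(G,X)\leq\slower(G)$, with a direct construction yielding the reverse inequalities. Via Proposition~\ref{P-actions} applied with the canonical dynamically generating partition $\cP=\{p_y:y\in Y\}$, where $p_y$ is the indicator of $\{x\in X:x_e=y\}$, the task reduces to showing $s_{E,\cP}(F,\cP)\geq s_E(F)$, and analogously for $\slower$, for arbitrary finite $E,F\subseteq G$.

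For such fixed $E,F$, given a sofic approximation $\sigma\in\GA(F,n,\delta,d)$ of the group together with a tuple $\boldsymbol{y}=(y_c)_{c=1}^d\in Y^d$, I would define a unital $^*$-homomorphism $\varphi=\varphi_{\sigma,\boldsymbol{y}}:\spn(\cP_{F^n})\to\diag(M_d)$ by declaring
\[
\varphi\Big(\prod_{s\in F^n}\alpha_s(p_{z_s})\Big)(c)=1\ \Longleftrightarrow\ y_{\sigma_s^{-1}(c)}=z_s\text{ for all }s\in F^n.
\]
This is well-defined because the map $c\mapsto(y_{\sigma_s^{-1}(c)})_{s\in F^n}$ partitions $\{1,\dots,d\}$ into cells indexed by $Y^{F^n}$, and the crucial feature is that condition~(ii) in the definition of $\HA$ then holds \emph{exactly}, with no error: both $\varphi(\alpha_s(p_y))$ and $\Ad\sigma_s\circ\varphi(p_y)$ equal the characteristic function of $\{c:y_{\sigma_s^{-1}(c)}=y\}$. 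The construction is rigged precisely to match the Bernoulli shift structure.

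To verify condition~(i), I would perform a first-moment plus Chebyshev analysis on $\boldsymbol{y}\sim\nu^{\otimes d}$. For a cylinder $p=\prod_s\alpha_s(p_{z_s})$ one has $\Eb[\tr\varphi(p)]=\frac1d\sum_c\Pr(y_{\sigma_s^{-1}(c)}=z_s\text{ for all }s)$; when the indices $\sigma_s^{-1}(c)$ are distinct as $s$ ranges over $F^n$, independence of the $y_j$'s together with the product structure of $\mu$ gives probability exactly $\prod_s\nu(z_s)=\mu(p)$, and the bad $c$, being fixed points of some $\sigma_{s'}\sigma_s^{-1}\approx\sigma_{s's^{-1}}$ with $s\neq s'$, have density $O(|F^n|^2\delta)$ by the trace bound on sofic approximations. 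For concentration, $\varphi(p)(c)$ and $\varphi(p)(c')$ are independent whenever the index sets $\{\sigma_s^{-1}(c):s\in F^n\}$ and $\{\sigma_s^{-1}(c'):s\in F^n\}$ are disjoint, and each $c$ has at most $|F^n|^2$ neighbors, so $\mathrm{Var}[\tr\varphi(p)]\leq|F^n|^2/d$. Chebyshev together with a union bound over the finitely many $p\in\cP_{F^n}$ shows that at least $(1-\eps)|Y|^d$ of the tuples $\boldsymbol{y}$ produce a $\varphi$ fulfilling condition~(i) for some $\delta'$ that can be driven to $0$ by taking $\delta$ small and $d$ large.

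For the counting step, since $\boldsymbol{y}$ is recovered from $\varphi|_\cP$ via $\varphi(p_y)(c)=1\Leftrightarrow y_c=y$, distinct tuples produce distinct $\varphi|_\cP$; thus, for any representative extension of a given restriction $\sigma|_E$, each good $\boldsymbol{y}$ contributes a distinct pair $(\sigma|_E,\varphi|_\cP)$, and summing over the $|\GA(F,n,\delta,d)|_E$ classes gives
\[
|\HA(F,\cP,n,\delta',d)|_{E,\cP}\geq|\GA(F,n,\delta,d)|_E\cdot(1-\eps)|Y|^d.
\]
The extra factor contributes only $\log|Y|/\log d\to 0$ after dividing by $d\log d$, so $s_{E,\cP}(F,\cP)\geq s_E(F)$ and $\slower_{E,\cP}(F,\cP)\geq\slower_E(F)$, and Proposition~\ref{P-actions} converts these to $s(G,X)\geq s(G)$ and $\slower(G,X)\geq\slower(G)$. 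The main technical burden is the concentration bookkeeping; there is no conceptual obstacle, because the equivariance condition holds on the nose and the Bernoulli product measure aligns perfectly with the iid structure of the $y_c$'s.
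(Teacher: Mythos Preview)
Your proposal is correct and is, in effect, a self-contained unpacking of exactly what the paper's proof invokes. The paper's argument is two lines: it quotes Proposition~\ref{P-action upper} for the upper bound and then cites Section~8 of \cite{Bow10} for the statement that every sufficiently good sofic approximation $\sigma$ of $G$ can be augmented to a sofic approximation of the Bernoulli action. What you have written is precisely Bowen's random-labeling construction specialized to the present framework: choose $\boldsymbol{y}\in Y^d$ at random, define $\varphi$ so that equivariance (condition~(ii)) holds on the nose, and use a first/second moment estimate to show that most $\boldsymbol{y}$ satisfy the trace-matching condition~(i). The counting conclusion and the passage through Proposition~\ref{P-actions} with $M=\spn(\cP)$ are handled correctly.

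Two minor bookkeeping remarks, neither of which affects the validity of the argument. First, throughout you should replace $F^n$ by $(F\cup F^*\cup\{e\})^n$ to match the definitions of $\GA$ and $\HA$; relatedly, to control the density of ``bad'' $c$ (those for which $\sigma_s^{-1}(c)=\sigma_{s'}^{-1}(c)$ for some $s\neq s'$ in the window) you need the trace bound on $\sigma_{s's^{-1}}$, which lies in a word ball of radius $2n$ rather than $n$, so one should start from $\sigma\in\GA(F,2n,\delta,d)$ and then observe $\GA(F,2n,\delta,d)\subseteq\GA(F,n,\delta,d)$. Second, your appeal to Proposition~\ref{P-actions} is cleanest if you take $M=\spn(\cP)$, so that $\cP$ is the unique finest partition of unity in $M$ and the inner infimum over partitions collapses to the single term $s_{E,\cP}(F,\cP)$; this is implicit in your reduction but worth stating. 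With those cosmetic fixes the proof is complete and has the advantage over the paper's version of not requiring the reader to consult \cite{Bow10}.
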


\begin{proof}
By Proposition~\ref{P-action upper} it suffices to show that $s(G,X) \geq s(G)$ and $\slower (G,X) \geq \slower (G)$.
This is a consequence of Section~8 of \cite{Bow10}, which shows that every sufficiently good
sofic approximation for $G$ is compatible with a suitable sofic approximation for the action.
\end{proof}

\begin{proposition}\label{P-action lower}
Let $G\curvearrowright (X,\mu )$ be a measure-preserving action.
Then either $\slower (G,X) \geq 1 - |G|^{-1}$ or $s(G,X) = -\infty$. 
\end{proposition}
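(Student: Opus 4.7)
The plan is to split into cases based on whether $G$ is finite or infinite. First, the hypothesis $s(G,X) > -\infty$ unpacks to give a finite $E_0 \subseteq I_{G,X}$ such that for every finite $F \subseteq I_{G,X}$ and every $n \in \Nb$, $\delta > 0$, the set $\SA(F, n, \delta, d)$ is nonempty for arbitrarily large $d$; in particular, the associated p.m.p.\ groupoid $\sG$ is sofic.

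If $G$ is infinite, then the source fibers of $\sG$ are countably infinite (one for each $x$, of the form $\{u_g : g \in G\}$), so Proposition~\ref{P-inf gd} applies directly and yields $\slower(G,X) \geq 1 = 1 - |G|^{-1}$.

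Suppose now that $G$ is finite and nontrivial (if $|G|=1$ the conclusion is vacuous). Here Proposition~\ref{P-inf gd} does not apply since the source fibers are finite, but its proof can be adapted with a sharpened orbit-size bound. Enumerate $G \setminus \{e\} = \{g_1,\dots,g_{|G|-1}\}$, set $s_i = u_{g_i} \in I_{G,X}$, and observe that $s_i^* s_i = 1$, $\tau(s_i) = 0$, and $\tau(s_j^* s_i) = 0$ for $i \neq j$. For $\eps > 0$, take $E = \{s_1,\dots,s_{|G|-1},1\}$ and, following Proposition~\ref{P-inf gd}, choose a finite $F \subseteq I_{G,X}$ containing $E$ along with $n \in \Nb$ and sufficiently small $\delta > 0$ so that $\slower_E(F,n,\delta) \leq \slower_E(I_{G,X}) + \eps$. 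By soficity pick $\varphi \in \SA(F,n,\delta,\ell)$ for some $\ell$. By a standard perturbation argument, find $C \subseteq \{1,\dots,\ell\}$ of proportion $\geq 1-\eps$ such that for every $c \in C$ the element $c$ together with the $|G|-1$ images $\varphi(s_i)c$ are all defined and pairwise distinct (using $s_i^*s_i = 1$ for domain coverage, $\tau(s_i) = 0$ to rule out $\varphi(s_i)c = c$, and $\tau(s_j^* s_i) = 0$ to rule out $\varphi(s_i)c = \varphi(s_j)c$). Consequently each minimal $\{\varphi(s_i)\}$-invariant subset of $\{1,\dots,\ell\}$ meeting $C$ has cardinality at least $|G|$, whence the number $q$ of such subsets satisfies $q/\ell \leq 1/|G|$. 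Amplifying $\varphi$ to $\ell d$ points and running the partition-counting argument of Proposition~\ref{P-inf gd} together with Stirling's approximation and Lemma~\ref{L-mult} gives $\slower_E(F,n,\delta) \geq 1 - \eps - q/\ell \geq 1 - \eps - |G|^{-1}$. Combining with the choice of $F,n,\delta$ and letting $\eps \to 0$ produces $\slower(G,X) \geq \slower_E(I_{G,X}) \geq 1 - |G|^{-1}$.

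The main obstacle, and the one genuine departure from the proof of Proposition~\ref{P-inf gd}, will be upgrading the orbit-size bound from $\geq |G|-1$ (which is what a direct imitation with $m = |G|-1$ partial isometries $s_i$ gives) to $\geq |G|$. This sharpening, which converts the naive estimate $1 - 1/(|G|-1)$ into the correct $1 - 1/|G|$, amounts to tracking that each $c \in C$ lies in its own orbit in addition to the $|G|-1$ distinct images $\varphi(s_i)c$ that the proof of Proposition~\ref{P-inf gd} already counts; the requisite distinctness is forced by the three relations $s_i^*s_i = 1$, $\tau(s_i) = 0$, and $\tau(s_j^* s_i) = 0$ listed above.
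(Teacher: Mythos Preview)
Your proof is correct and follows essentially the same approach as the paper, which simply says to argue ``as in the proof of Proposition~\ref{P-inf gd}.'' You have supplied the details the paper omits, in particular the case split on $|G|$ and, in the finite case, the sharpening of the orbit-size bound from $|G|-1$ to $|G|$ by including $c$ itself among the distinct orbit points (using condition~(2) in the proof of Proposition~\ref{P-inf gd}, which that proof has available but does not exploit since $m\to\infty$ there).
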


\begin{proof}
If $s(G,X) \neq -\infty$ then the groupoid associated to the action is sofic, and an argument
as in the proof of Proposition~\ref{P-inf gd} shows that $\slower (G,X) \geq 1 - |G|^{-1}$.
\end{proof}

\begin{theorem}\label{T-amen action}
Suppose that $G$ is amenable. Let $G\curvearrowright (X,\mu )$ be a 
measure-preserving action.
Then $s(G,X) = \slower (G,X) = 1 - |G|^{-1}$. 
\end{theorem}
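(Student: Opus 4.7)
The plan is to combine the general upper bound for actions (Proposition~\ref{P-action upper}), the computation of the sofic dimension of an amenable group (Theorem~\ref{T-amenable group}), and the lower bound provided by Proposition~\ref{P-action lower}, after first ensuring that the groupoid of the action is sofic so that the ``or $s(G,X) = -\infty$'' alternative in that proposition is ruled out.

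For the upper bound, by Proposition~\ref{P-action upper} we have $s(G,X) \leq s(G)$, and by Theorem~\ref{T-amenable group} the right-hand side equals $1 - |G|^{-1}$. Since $\slower(G,X) \leq s(G,X)$ always, this already gives the desired upper bound for both invariants.

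For the lower bound, the task is to show $\slower(G,X) \geq 1 - |G|^{-1}$. According to Proposition~\ref{P-action lower} this will follow once we know that $s(G,X) \neq -\infty$, i.e., that the p.m.p.\ groupoid $\sG$ associated to $G\curvearrowright(X,\mu)$ is sofic. I would verify this by a direct Rokhlin-type construction using amenability: given a finite symmetric $F\subseteq G$ containing $e$, a finite partition of unity $\cP$ consisting of projections in $L^\infty(X,\mu)$, an $n\in\Nb$, and a $\delta>0$, invoke Lemma~\ref{L-qt} (applied to $G$ itself) to produce tiles $T_1\subseteq\cdots\subseteq T_k\subseteq G$ that are sufficiently $F^n$-invariant. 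Refine the partition $\cP$ dynamically over each tile and choose, for each atom $A$ of this refinement, a set of ``centers'' in $\{1,\dots,d\}$ of relative cardinality close to $\mu(A)/|T|$ (for the appropriate tile $T$). Define $\sigma\colon G\to S_d$ by left-translation along the tiled copies of $T_j$ (and arbitrarily on the small leftover set), and define $\varphi\colon\spn(\cP_{F^n})\to\Cb^d$ by sending each refinement atom to the characteristic function of the union of orbits through its centers. The near-invariance of the tiles and the choice of relative sizes make $(\sigma,\varphi)\in\HA(F,\cP,n,\delta,d)$ for all large enough $d$, showing that the corresponding set is nonempty and hence (translating through Proposition~\ref{P-actions local}) that $\SA(F\cup\cP,n,\delta,d)\neq\emptyset$. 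Thus $\sG$ is sofic and $s(G,X)\neq -\infty$.

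Combining the two steps: from the upper bound $\slower(G,X)\leq s(G,X)\leq 1 - |G|^{-1}$ and the lower bound $\slower(G,X)\geq 1 - |G|^{-1}$ (which holds once soficity is established) we conclude $s(G,X) = \slower(G,X) = 1 - |G|^{-1}$. The only substantive step is the soficity of the action groupoid for amenable $G$; everything else is a formal consequence of results already proved. The main care in the construction is matching the counting conventions of Definition~\ref{D-groupoid local}/Proposition~\ref{P-actions} to the tile-based model, but since we only need nonemptiness (not a growth rate) the argument is significantly easier than the counting arguments in Section~\ref{S-amalg groups}, and the Ornstein--Weiss machinery packaged in Lemma~\ref{L-qt} is exactly what is required.
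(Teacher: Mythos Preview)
Your overall strategy is exactly the paper's: combine Proposition~\ref{P-action upper}, Theorem~\ref{T-amenable group}, and Proposition~\ref{P-action lower}, reducing everything to showing $s(G,X)\neq -\infty$. The only difference is in how this last point is handled: the paper simply cites an external result (Theorem~6.8 of \cite{KerLi10a}) for the soficity of the action groupoid, whereas you sketch a direct Rokhlin-tower construction. Your construction is in the right spirit and would work, but note a small imprecision: Lemma~\ref{L-qt} does not \emph{produce} the tiles $T_1\subseteq\cdots\subseteq T_k$---those are hypothesized as input, and their existence comes directly from the F{\o}lner condition for amenable $G$. The output of Lemma~\ref{L-qt} is a tiling of $\{1,\dots,d\}$ given an already-existing sofic approximation $\sigma$, which is not quite what you need here since you are building $\sigma$ from scratch. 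What your argument actually requires is just a F{\o}lner set (or a quasitiling of $G$ by F{\o}lner sets, via Ornstein--Weiss), after which the construction of $(\sigma,\varphi)$ proceeds as you describe. With that adjustment your proof is correct and more self-contained than the paper's; the paper's citation is shorter but imports the same machinery.
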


\begin{proof}
In view of Propositions~\ref{P-action upper} and \ref{P-action lower} and Theorem~\ref{T-amenable group},
it suffices to show that $s(G,X) \neq -\infty$. But this follows for example from Theorem~6.8 of \cite{KerLi10a}.
\end{proof}

\section{Actions of free products}\label{S-free prod actions}

In this final section we derive a free product formula for actions. In the case of
free actions a more general formula is established in \cite{DykKerPic11} using an
equivalence relation approach.
We follow the notational conventions of the previous section. 
Also, we will write $s_{E,\cP} (G,X)$ and $\slower_{E,\cP} (G,X)$ to mean $s_{E\cup\cP} (I_{G,X} )$ 
and $\slower_{E\cup\cP} (I_{G,X} )$, respectively, where as before
$I_{G,X}$ is the p.m.p.\ groupoid associated to the action $G\curvearrowright (X,\mu )$.

\begin{lemma}\label{L-amalgamated upper actions}
Let $G_1$ and $G_2$ be countable discrete groups. Let $\alpha$ be a
measure-preserving action of $G_1 * G_2$ on $(X,\mu )$. 
Then
\[
s(G_1 * G_2 ,X) \leq s(G_1 ,X) + s(G_2 ,X) .
\]
\end{lemma}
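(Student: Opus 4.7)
The plan is to adapt the approach of Lemma~\ref{L-amalgamated upper} to the action framework, exploiting Proposition~\ref{P-actions} and the separated formulation $s_{E, \cQ}(F, \cP)$ of Section~\ref{S-group actions}. Since $G_1 \cup G_2$ is a generating subset of $G_1 * G_2$, Proposition~\ref{P-actions} yields
\[
s(G_1 * G_2, X) = \sup_{E, \cQ}\, \inf_{F, \cP}\, s_{E, \cQ}(F, \cP)
\]
with $E, F$ ranging over finite subsets of $G_1 \cup G_2$ and $\cP, \cQ$ over finite partitions of unity consisting of projections in some dynamically generating $^*$-subalgebra of $L^\infty(X, \mu)$. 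I shall fix $E = E_1 \sqcup E_2$ with $E_i \subseteq G_i$, and correspondingly consider $F = F_1 \cup F_2$ with $F_i \subseteq G_i$.

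The central combinatorial bound to establish is
\[
|\HA(F_1 \cup F_2, \cP, n, \delta, d)|_{E_1 \cup E_2, \cQ} \le |\HA(F_1, \cP, n, \delta, d)|_{E_1, \cQ} \cdot |\HA(F_2, \cP, n, \delta, d)|_{E_2, \cQ}.
\]
The key observation is that for every $(\sigma, \varphi) \in \HA(F_1 \cup F_2, \cP, n, \delta, d)$, the pair obtained by restricting $\sigma$ to $G_i$ and $\varphi$ to $\spn(\cP_{(F_i \cup F_i^* \cup \{e\})^n})$ belongs to $\HA(F_i, \cP, n, \delta, d)$, since all defining conditions---approximate multiplicativity and the trace estimate on $\sigma$, and the trace and intertwining conditions on $\varphi$---are inherited from the stronger hypotheses valid on $F_1 \cup F_2$. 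The $(E_1 \cup E_2, \cQ)$-restriction is then determined by, and determines, the two individual $(E_i, \cQ)$-restrictions, so the restriction classes inject into the product of the individual restriction classes.

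Taking $\limsup_d (d \log d)^{-1} \log$ of this inequality gives, by subadditivity of $\limsup$,
\[
s_{E_1 \cup E_2, \cQ}(F_1 \cup F_2, \cP, n, \delta) \le s_{E_1, \cQ}(F_1, \cP, n, \delta) + s_{E_2, \cQ}(F_2, \cP, n, \delta).
\]
To pass to the infima over $n, \delta, F, \cP$, I apply the standard $\varepsilon$-trick: given $\varepsilon > 0$, for each $i$ select $n_i, \delta_i, F_i^*, \cP_i^*$ with $s_{E_i, \cQ}(F_i^*, \cP_i^*, n_i, \delta_i) < s(G_i, X) + \varepsilon$, then set $n = \max(n_1, n_2)$, $\delta = \min(\delta_1, \delta_2)$, $F = F_1^* \cup F_2^*$, and let $\cP$ be a common refinement of $\cP_1^*$ and $\cP_2^*$. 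Monotonicity of $|\HA|_{E_i, \cQ}$ in these parameters permits replacing each $(F_i^*, \cP_i^*, n_i, \delta_i)$ by $(F, \cP, n, \delta)$ without increasing the relevant count, yielding $\inf_{F', \cP'} s_{E_1 \cup E_2, \cQ}(F', \cP') \le s(G_1, X) + s(G_2, X) + 2\varepsilon$. Letting $\varepsilon \to 0$ and taking $\sup_{E, \cQ}$ completes the proof.

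The main delicate point will be the monotonicity needed to replace $\cP_i^*$ by a common refinement of $\cP_1^*$ and $\cP_2^*$: enlarging $F$ simply strengthens the $\HA$-conditions and shrinks the set, but refining $\cP$ must be handled with a small slack in $\delta$, since elements of $(\cP_i^*)_{F^n}$ expand into sums of elements of $\cP_{F^n}$ when $\cP$ refines $\cP_i^*$, and the trace and compatibility estimates do not transfer verbatim. This is routine to arrange but requires careful bookkeeping.
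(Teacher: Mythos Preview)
Your argument is correct, but the paper takes a cleaner route. Rather than passing through the separated $\HA$ formulation of Section~\ref{S-group actions}, the paper stays in the original groupoid $\SA$ framework: using Theorem~\ref{T-generating} it chooses $E_1\subseteq G_1$, $E_2\subseteq G_2$, and a finite set $\cQ$ of projections so that $s(G_1*G_2,X)\le s_{E_1\cup E_2\cup\cQ}(G_1\cup G_2,X)+\kappa$, and then picks finite sets $F_i\subseteq I_{G_i,X}$ (arbitrary elements of the inverse semigroup, not merely group elements together with a partition) with $s_{E_i\cup\cQ}(F_i)\le s(G_i,X)+\kappa$. The product bound
\[
|\SA(F_1\cup F_2,n,\delta,d)|_{E_1\cup E_2\cup\cQ}\le |\SA(F_1,n,\delta,d)|_{E_1\cup\cQ}\,|\SA(F_2,n,\delta,d)|_{E_2\cup\cQ}
\]
is then immediate, since any $\varphi\in\SA(F_1\cup F_2,n,\delta,d)$ restricts to $[I_{G_i,X}]$ as an element of $\SA(F_i,n,\delta,d)$. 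This sidesteps entirely the partition-refinement issue you flag: because $F_i$ may be any finite subset of $I_{G_i,X}$, there is no need to reconcile two partitions $\cP_1^*,\cP_2^*$ into a common $\cP$ or to track the slack in $\delta$ this introduces. Your approach buys explicitness about the group-versus-space decomposition, but at the price of the bookkeeping you correctly identify; the paper's approach absorbs that bookkeeping into the groupoid formalism and is accordingly shorter.
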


\begin{proof}
Let $\kappa > 0$. Since $G_1 \cup G_2$ generates $G_1 * G_2$, by Theorem~\ref{T-generating} there are nonempty
finite sets $E_1 \subseteq G_1$ and $E_2 \subseteq G_2$ and a finite set $\cQ$
of projections in $L^\infty (X,\mu )$ such that 
$s(G_1 * G_2 ,X) \leq s_{E_1 \cup E_2 \cup\cQ} (G_1 * G_2 ,X) + \kappa$.
Take nonempty finite sets $F_1 \subseteq I_{G_1 ,X}$ and $F_2 \subseteq I_{G_2 ,X}$ such that
$s_{E_1 \cup\cQ} (F_1 ) \leq s(G_1 ,X) + \kappa$
and $s_{E_2 \cup\cQ} (F_2 ) \leq s(G_2 ,X) + \kappa$.
Given $d,n\in\Nb$ and $\delta > 0$, for every element $\varphi\in\SA (F_1 \cup F_2 , n,\delta ,d)$
we have $\varphi |_{[I_{G_1 ,X} ]} \in\SA (F_1 , n,\delta ,d)$
and $\varphi |_{[I_{G_2 ,X} ]} \in\SA (F_2 , n,\delta ,d)$. Hence
\[
|\SA (F_1 \cup F_2 , n,\delta ,d)|_{E_1 \cup E_2 \cup\cQ} 
\leq |\SA (F_1 , n,\delta ,d)|_{E_1 \cup\cQ} |\SA (F_2 , n,\delta ,d)|_{E_2 \cup\cQ} 
\]
and so
\begin{align*}
s(G_1 * G_2 ,X) &\leq s_{E_1 \cup E_2 \cup\cQ} (F_1 \cup F_2 ) + \kappa \\
&\leq s_{E_1 \cup\cQ} (F_1 ) + s_{E_2 \cup\cQ} (F_2 ) + \kappa \\
&\leq s(G_1 ,X) + s(G_2 ,X) + 3\kappa .
\end{align*}
Since $\kappa$ was an arbitrary positive number we obtain the lemma.
\end{proof}

The proof of the following lemma is similar to that of Lemma~\ref{L-mult}.

\begin{lemma}\label{L-multiple}
Let $\cP$ be a finite partition of unity in $L^\infty (X,\mu )$ consisting of projections. 
Let $\cQ$ be a finite set of projections in $L^\infty (X,\mu )$.
Let $E$ and $F$ be finite subsets of $G$, $n\in\Nb$, and $\delta > 0$. Let $\ell\in\Nb$. Then 
\[
\slower_{E,\cQ} (F,\cP ,n) = \inf_{\delta > 0} 
\liminf_{d\to\infty} \frac{1}{\ell d \log (\ell d)} \log |\HA (F,\cP ,n ,\delta ,\ell d)|_{E,\cQ} .
\]
\end{lemma}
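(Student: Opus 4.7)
The plan is to follow the proof of Lemma~\ref{L-mult}, adapted to the context of $\HA$. The core step is to establish that for integers $d_1 \leq d_2$ there is an embedding $\HA(F,\cP,n,\delta,d_1) \hookrightarrow \HA(F,\cP,n,\delta',d_2)$ that is injective modulo restriction to $E$ and $\cQ$, where $\delta' = \delta'(d_1,d_2)$ tends to $\delta$ as $d_1,d_2 \to \infty$ with $d_2/d_1 \to 1$. Given $(\sigma,\varphi) \in \HA(F,\cP,n,\delta,d_1)$, identify $\Cb^{d_2} \cong \Cb^{d_1} \oplus \Cb^{d_2-d_1}$, let $\tilde{\sigma}_s$ act as $\sigma_s$ on the first summand and as the identity on the second, and set $\tilde{\varphi} = \varphi \oplus \psi$ for any fixed unital $^*$-homomorphism $\psi : \spn(\cP_{(F \cup F^* \cup \{e\})^n}) \to \Cb^{d_2-d_1}$; such a $\psi$ exists since its domain is a commutative finite-dimensional unital $^*$-algebra, so one may for instance take $\psi(a) = \omega(a)\unit$ for a character $\omega$.

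The tolerance estimates are a mechanical check using the fact that $\tilde{\sigma}$ is trivial on the new block, that normalized $2$-norms on $M_{d_2}$ decompose as convex combinations of those on $M_{d_1}$ and $M_{d_2-d_1}$ weighted by $d_1/d_2$ and $(d_2-d_1)/d_2$, and that the difference of any two projections has normalized $2$-norm at most $1$. Explicitly,
\[
\| \tilde{\sigma}_{s_1 \cdots s_n} - \tilde{\sigma}_{s_1} \cdots \tilde{\sigma}_{s_n} \|_2^2 = \tfrac{d_1}{d_2}\| \sigma_{s_1 \cdots s_n} - \sigma_{s_1} \cdots \sigma_{s_n} \|_2^2 < \delta^2 ,
\]
while $\tr_{d_2}(\tilde{\sigma}_s) < \delta + (d_2-d_1)/d_2$ for $s \neq e$, $| \tr_{d_2}\circ\tilde{\varphi}(p) - \mu(p) | < \delta + (d_2-d_1)/d_2$, and
\[
\| \tilde{\varphi}(\alpha_s(p)) - \Ad\tilde{\sigma}_s \tilde{\varphi}(p) \|_2^2 = \tfrac{d_1}{d_2}\| \varphi(\alpha_s(p)) - \Ad\sigma_s \varphi(p) \|_2^2 + \tfrac{d_2-d_1}{d_2}\| \psi(\alpha_s(p)) - \psi(p) \|_2^2 ,
\]
which is less than $\delta^2 + (d_2-d_1)/d_2$. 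Setting $\delta' = \delta + \sqrt{(d_2-d_1)/d_2}$ thus places $(\tilde{\sigma},\tilde{\varphi})$ in $\HA(F,\cP,n,\delta',d_2)$, and the extension is injective modulo restriction to $E$ and $\cQ$ because the new block is the same across all pairs, yielding
\[
|\HA(F,\cP,n,\delta,d_1)|_{E,\cQ} \leq |\HA(F,\cP,n,\delta',d_2)|_{E,\cQ} .
\]

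With this inclusion in hand the remainder of the argument follows Lemma~\ref{L-mult} essentially verbatim. For each $d \in \Nb$ set $k = \lceil d/\ell \rceil$, so that $d \leq \ell k < d + \ell$ and both $\ell k/d$ and $\ell k \log(\ell k)/(d \log d)$ tend to $1$ while $\delta'(d, \ell k) \to \delta$. The inclusion yields, for any $\eta_0 > 0$ and all sufficiently large $d$,
\[
\frac{1}{d \log d}\log |\HA(F,\cP,n,\delta,d)|_{E,\cQ} \leq \frac{\ell k \log(\ell k)}{d \log d}\cdot \frac{1}{\ell k \log(\ell k)}\log |\HA(F,\cP,n,\delta + \eta_0,\ell k)|_{E,\cQ} .
\]
Taking the liminf over $d \to \infty$, then letting $\eta_0 \to 0$ and taking the infimum over $\delta$, gives $\slower_{E,\cQ}(F,\cP,n) \leq \inf_{\delta > 0}\liminf_{d \to \infty}\frac{1}{\ell d \log(\ell d)}\log |\HA(F,\cP,n,\delta,\ell d)|_{E,\cQ}$. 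The reverse inequality is immediate since the liminf over a subsequence is at least the liminf over the full sequence; the cases where $\HA$ is eventually empty are handled by the same inclusion, yielding $-\infty$ on both sides. There is no substantial obstacle: the choice of $\psi$ is unconstrained because the new-block contribution to each $2$-norm is absorbed by the vanishing factor $(d_2-d_1)/d_2$.
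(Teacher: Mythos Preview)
Your proof is correct and follows exactly the approach the paper indicates, namely adapting the proof of Lemma~\ref{L-mult} to the $\HA$ setting. The only new ingredient beyond Lemma~\ref{L-mult}---extending the unital homomorphism $\varphi$ to $\Cb^{d_2}$ by a fixed character on the new block so that unitality is preserved---is handled correctly, and your tolerance estimates are sound.
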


Recall that $S_d$ acts on the set of maps 
$\sigma : G\to S_d$ by $(U\cdot\sigma )_s = U\sigma_s U^{-1}$. Also, given a unital $^*$-subalgebra 
$M\subseteq L^\infty (X,\mu )$, $S_d$ acts on the set of unital homomophisms 
$\varphi : M\to \diag (M_d ) \cong \Cb^d$ by $(U\cdot\varphi )(f) = U\varphi (f)$. 
Thus we have an action $S_d$ on the set of pairs $(\sigma ,\varphi )$ consisting of such $\sigma$ and
$\varphi$.

Recall also that for sets $A_1 \subseteq A_2$, $B_1 \subseteq B_2$, $Z_1$, and $Z_2$ and a collection $\sY$
of ordered pairs consisting of maps $A_1 \to Z_1$ and $A_2 \to Z_2$
we write $|\sY |_{B_1 , B_2}$ for the cardinality of the set of pairs 
$(\sigma |_{B_1} , \varphi |_{B_2} )$ where $(\sigma , \varphi )\in\sY$.

\begin{lemma}\label{L-free}
Let $G_1$ and $G_2$ be countable discrete groups and let $\alpha$ be a
measure-preserving action of $G_1 * G_2$ on $(X,\mu )$. Then
\[
\slower (G_1 * G_2 ,X) \geq \slower (G_1 ,X) + \slower (G_2 ,X) .
\]
\end{lemma}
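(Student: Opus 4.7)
The plan is to adapt the approach of Lemma~\ref{L-amalgamated lower} to the action setting, with considerable simplifications since there is no amalgamation: the matching of sofic approximations for $G_1$ and $G_2$ is achieved directly through the homomorphism $\varphi$ on $L^\infty(X,\mu)$ rather than via the quasitiling machinery for an amenable subgroup. By Proposition~\ref{P-actions}, given $\theta>0$ I may choose finite symmetric sets $E_i \subseteq G_i$ and a finite partition of unity $\cP = \{p_1,\ldots,p_k\}$ of $L^\infty(X,\mu)$ consisting of projections such that $\slower_{E_i,\cP}(G_i,X) \geq \slower(G_i,X) - \theta$ for $i=1,2$. It then suffices to establish, for arbitrary finite $F_i \subseteq G_i$ containing $E_i$, the key inequality
\[
\slower_{E_1 \cup E_2, \cP}(F_1 \cup F_2, \cP) \geq \slower_{E_1, \cP}(F_1, \cP) + \slower_{E_2, \cP}(F_2, \cP),
\]
from which the desired bound $\slower(G_1*G_2,X) \geq \slower(G_1,X)+\slower(G_2,X)-2\theta$ follows by monotonicity in $(F,\cP)$, the fact that the infimum of a sum of quantities monotone in a joint variable equals the sum of the infima, and Proposition~\ref{P-actions} applied to $G_1 * G_2$ with generating set $G_1 \cup G_2$.

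The core construction is a matching-and-merging procedure. Writing $m_j$ for the nearest integer to $\mu(p_j)d$ (adjusted so $\sum m_j = d$), and given $(\sigma^{(1)},\varphi^{(1)}) \in \HA(F_1,\cP,n',\delta',d)$ and $(\sigma^{(2)},\varphi^{(2)}) \in \HA(F_2,\cP,n',\delta',d)$, a small perturbation aligns the ranks of $\varphi^{(i)}(p_j)$ exactly to $m_j$, after which one finds $U \in S_d$ with $U\varphi^{(2)}(p_j)U^{-1} = \varphi^{(1)}(p_j)$ for every $j$; the set of such $U$ is a coset of the block stabilizer $\prod_{j=1}^k \Sym(m_j) \subseteq S_d$ of cardinality $\prod_j m_j!$. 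Setting $\sigma_2 = U\sigma^{(2)}U^{-1}$, so that $(\sigma_2,\varphi^{(1)}) \in \HA(F_2,\cP,n',\delta',d)$ by $S_d$-equivariance, and combining via a universal-property-style extension (in the spirit of Lemma~\ref{L-universal} but for actions), one defines a merged pair $(\sigma,\varphi)$ with $\sigma|_{G_1}=\sigma^{(1)}$, $\sigma|_{G_2}=\sigma_2$, and $\varphi$ extending $\varphi^{(1)}$ consistently to $\spn(\cP_{(F_1\cup F_2\cup\{e\})^n})$. The approximate multiplicativity and the trace and equivariance conditions on pure $G_i$-data are automatic, and the only remaining condition for membership in $\HA(F_1\cup F_2,\cP,n,\delta,d)$ is the trace bound $|\tr(\sigma_w)|<\delta$ for non-identity reduced words $w$ mixing factors from $F_1\setminus\{e\}$ and $F_2\setminus\{e\}$, in which the $G_2$-factors of $\sigma_w$ appear conjugated by $U$.

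The main technical step --- and the expected principal obstacle --- is verifying that for most $U$ in the matching set this trace bound holds. Since $\prod_j \Sym(m_j)$ is a product of L\'{e}vy families and hence itself a L\'{e}vy family (as in the Gromov-Milman argument used in Lemma~\ref{L-conc}), an adaptation of the concentration arguments underlying Lemmas~\ref{L-ColDyk10}, \ref{L-conc}, and \ref{L-free prod} to this subgroup should yield that the fraction of bad $U$ tends to $0$ as $d\to\infty$, uniformly over the input data. Lemma~\ref{L-free prod} itself is stated for blocks of equal tensor-size and needs to be reformulated here for the unequal block sizes $m_j$, but the underlying concentration argument carries over essentially verbatim.

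Finally the counting goes as follows. Writing $\sY_i = \HA(F_i,\cP,n',\delta',d)$ and $\sY_i^{\varphi_0} = \{\sigma : (\sigma,\varphi_0)\in\sY_i\}$ for a reference homomorphism $\varphi_0$, the $S_d$-action on $\sY_i$ has essentially a single orbit on the $\varphi$-components of size $d!/\prod_j m_j!$, so by $S_d$-equivariance one has $|\sY_i^{\varphi_0}|_{E_i} = |\sY_i|_{E_i,\cP}\cdot \prod_j m_j!/d!$ independently of $\varphi_0$. Summing the merging-and-freeness construction over $\varphi_0$ in the orbit and pairs in $\sY_1^{\varphi_0}\times\sY_2^{\varphi_0}$ then yields
\[
|\HA(F_1\cup F_2,\cP,n,\delta,d)|_{E_1\cup E_2,\cP} \geq (1-o(1))\, |\sY_1|_{E_1,\cP}\cdot |\sY_2|_{E_2,\cP} \cdot \frac{\prod_j m_j!}{d!}.
\]
Stirling's approximation gives $\log(\prod_j m_j!/d!) = O(d) = o(d\log d)$, so this factor vanishes after normalization by $d\log d$ and taking $\liminf$; the key inequality (and hence the lemma) follows upon taking infima in the $\HA$-parameters.
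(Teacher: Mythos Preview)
Your overall strategy --- match the two sofic approximations along the homomorphism $\varphi$ and then use concentration over the stabilizing permutations to control mixed words --- is the same as the paper's. However, your treatment of the homomorphism component $\varphi$ has a genuine gap that the paper handles differently.

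The problem is this. For the merged pair $(\Omega,\varphi)$ to lie in $\HA(F_1\cup F_2,\cP,n,\delta,d)$, the map $\varphi$ must be a \emph{unital homomorphism} on $\spn(\cP_{(F_1\cup F_2)^n})$, satisfying the trace condition $|\tr\varphi(q)-\mu(q)|<\delta$ for every $q\in\cP_{(F_1\cup F_2)^n}$ and the equivariance condition for every $s\in(F_1\cup F_2)^n$. Your input $\varphi^{(1)}$ is only defined on $\spn(\cP_{F_1^{n'}})$, which does not contain projections of the form $\alpha_s(p)$ for $s$ involving $G_2$-letters. You assert that $\varphi^{(1)}$ can be ``extended consistently'' via a universal-property argument in the spirit of Lemma~\ref{L-universal}, but that lemma concerns only the group map $\sigma$; there is no analogous universal property producing a homomorphism $\varphi$ on the larger commutative algebra. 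The naive definition $\varphi(\prod_s\alpha_s(p_s)):=\prod_s\Ad\Omega_s(\varphi^{(1)}(p_s))$ need not respect the relations in $L^\infty(X,\mu)$, and even after perturbing to a genuine homomorphism you would still need to verify the trace condition on mixed projections --- i.e., that the merged model reproduces the \emph{joint} distribution of $\cP$ under translates by mixed words --- which is exactly the substantive point and does not follow from the pure $G_i$-data.

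The paper sidesteps this entirely by working with the refined partition $\cP_K$ for $K=(F_1\cup F_2)^n$ \emph{as the base partition} in the input spaces: it sets $\sY_i=\HA(F_i^{2n},\cP_K,n,\delta'',\ell d)$, so that $\varphi$ is already a homomorphism on $\spn(\cP_K)\supseteq\spn(\cP_{(F_1\cup F_2)^n})$ and the trace condition there is part of the hypothesis. The equivariance for mixed words is then checked directly using the approximate commutation $\|U\varphi(p)-\varphi(p)U\|_2<2\delta'$ for all $p\in\cP_K$, which holds because the paper discretizes the measures of $\cP_K$ to multiples of $1/\ell$ and takes $U$ block-diagonal in the tensor identification $M_{\ell d}\cong M_\ell\otimes M_d$. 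This $\ell$-discretization is also what lets the paper invoke Lemma~\ref{L-conc} directly (all blocks have the same size $d$), rather than needing the unequal-block generalization you sketch. Your proposal could likely be repaired by adopting the $\cP_K$-as-base-partition device, but then your alignment $U$ must stabilize $\cP_K$ rather than just $\cP$, and the concentration argument still requires either the paper's discretization or a worked-out unequal-block version of Lemma~\ref{L-ColDyk10}.
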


\begin{proof}
Let $\eta > 0$. Then by Theorem~\ref{T-generating}, Proposition~\ref{P-actions local}, and
Proposition~\ref{P-pou} there exist finite sets 
$E_1 \subseteq G_1$ and $E_2 \subseteq G_2$ and finite sets of projections
$\cQ_1 , \cQ_2 \subseteq L^\infty (X,\mu )$ such that
$\slower_{E_1 ,\cQ_1} (G_1 ,X) \geq \slower (G_1 ,X) - \eta$ and
$\slower_{E_2 ,\cQ_2} (G_2 ,X) \geq \slower (G_2 ,X) - \eta$. 
Write $\cR$ for the set of all projections in $L^\infty (X,\mu )$. Take 
finite symmetric sets $F_1 \subseteq G_1$ and $F_2 \subseteq G_2$ containing $e$ and a 
set $\cP$ consisting of the nonzero projections of some finite-dimensional unital $^*$-subalgebra of
$L^\infty (X,\mu )$ containing $\cQ_1 \cup \cQ_2$ such that
$\slower_{E_1 \cup E_2 ,\cQ_1 \cup \cQ_2} (G_1 \cup G_2 ,\cR ) 
\geq \slower_{E_1 \cup E_2 ,\cQ_1 \cup \cQ_2} (F_1 \cup F_2 ,\cP ) - \eta$.

Let $\delta > 0$ and $n\in\Nb$. Set $K=(F_1 \cup F_2 )^n$.
Let $0 < \delta' < \delta /(7n)$, 
to be further specified. Let $\kappa > 0$, to be further specified.

Fix an $\ell\in\Nb$ such that for every $p\in\cP_K$ we can find a $b_p \in\Nb$
such that $| \mu (p) - b_p /\ell | < \kappa$.
Let $0<\delta'' <\delta'$, to be further specified as a function of $\ell$. 
Let $d\in\Nb$. 
For $i=1,2$ set $\sY_i = \HA (F_i^{2n} ,\cP_K ,n,\delta'' ,\ell d)$ for brevity.

Fix a $(\sigma ,\varphi )\in \HA (F_1^{2n} ,\cP_K ,n,\delta'' ,\ell d)$. For every map $\omega : G_2 \to S_d$
we construct, using freeness, a map $\Omega = \Omega_\omega : G_1 * G_2 \to S_d$ such that
for a reduced word $t_1 \cdots t_k$ where the $t_i$ alternate membership in $F_1^n$ and $F_2^n$ we have
$\Omega_{t_1 \cdots t_n} = \rho_{1,t_1}\rho_{2,t_2}\cdots\rho_{n,t_n}$ 
where $\rho_i = \sigma$ if $s_i \in F_1^n$ and $\rho_i = \omega$ otherwise. 

A simple perturbation argument shows that if $\kappa$ is small enough as a function of $\delta'$ 
then for sufficiently large $d$ we can fix an identification of $M_{\ell d}$ with $M_\ell \otimes M_d$ such that matrix units 
pair with tensor products of matrix units and for every $p\in\cP_K$ there is a diagonal projection $D \in M_\ell$
such that $\| \varphi (p) - D \otimes 1 \|_2 < \delta'$. Writing $E_{i,j}$ for the standard matrix units in $M_d$,
we denote by $\sX_d$ the set of all permutation matrices in $M_{\ell d}$ of the form 
$\sum_{i=1}^\ell E_{i,i} \otimes U_i \in M_\ell \otimes M_d$. Note that for every $p\in\cP_K$ we have,
taking a projection $D\in M_\ell$ such that $\| \varphi (p) - D\otimes 1 \|_2 < \delta'$,
\begin{align*}\tag{$\ast$}
\| U\varphi (p) - \varphi (p)U \|_2 \leq \| U(\varphi (p) - D\otimes 1) \|_2 + \| (D\otimes 1 - \varphi (p))U \|_2 < 2\delta' .
\end{align*}

Let $(\omega ,\psi )\in\sY_2$.  By the same type of perturbation argument 
alluded to in the previous paragraph, if we assume $\kappa$ to be sufficiently small as a function of $\delta'$ 
then we can find a $W\in S_{\ell d}$ such that the pair
$(\omega' ,\psi' ) = W\cdot (\omega , \psi )$ satisfies $\| \psi' (p) - \varphi (p) \|_2 < \delta'$ for all $p\in\cP_K$.
Write $\Upsilon_{d,\sigma ,\varphi ,\omega ,\psi ,W}$ for the set of all $U\in\sX_d$ such that 
for every $k=1,\dots ,n$ the map $\Omega = \Omega_{U\cdot\omega'}$ satisfies 
$\tr_{\ell d} (\Omega_{t_1 \cdots t_k} ) < \delta'$ for all reduced words 
$t_1 \cdots t_k \neq e$ where the $t_j$ alternate membership in $F_1^n$ and $F_2^n$. 

Now given any $U\in \sX_d$ and $A\in S_{\ell d}$, if we view these as elements of $M_\ell \otimes M_d$ 
and write $U = \sum_{i=1}^\ell E_{i,i} \otimes U_i$ and $A = \sum_{i,j=1}^\ell E_{i,j} \otimes A_{i,j}$
then $UAU^* = \sum_{i,j=1}^\ell E_{i,j} \otimes U_i A_{i,j} U_j^*$.
Thus, by multiple applications of Lemma~\ref{L-single}, whenever $d$ is
large enough we can find a $V \in \sX_d$ such that if
$A$ is equal to $\sigma_s$ for some $s\in F_1^{2n} \setminus \{ e \}$ or to $\omega'_s$ for some $s\in F_2^{2n} \setminus \{ e \}$ then writing
$V A V^* =  \sum_{i,j=1}^\ell E_{i,j} \otimes A_{i,j}$ the quantity $\tr ( A_{i,j})$ is smaller than a prescribed positive value
for all distinct $i,j\in \{1,\dots,\ell \}$, and we can also ensure 
that $\tr ( A_{i,i})$ is smaller than the same prescribed positive value for all $i=1,\dots,n$
by assuming $\delta''$ to be small enough as a function of $\ell$. Consider a product of the form
\begin{align*}\tag{$\ast\ast$}
VA_1 V^* (U(V A_2 V^* )U^* )\cdots VA_{2r-1} V^*(U(V A_{2r} V^* )U^* )
\end{align*}
for $1\leq r\leq n/2$, $U\in\sX_d$, and each $A_1 , \dots , A_{2r}$ equal to
$\sigma_s$ for some $s\in F_1 \setminus \{ e \}$ or to $\omega'_s$ for some $s\in F_2 \setminus \{ e \}$. Expressing
$U$ as $\sum_{i=1}^\ell E_{i,i} \otimes U_i$ and each $VA_k V^*$ as a sum of the form
$\sum_{i,j=1}^\ell E_{i,j} \otimes A_{i,j}$, we expand the product ($\ast\ast$) to obtain
a sum of terms of the form $E_{i,j} \otimes B$ and apply
Lemma~\ref{L-conc} to the second tensor product factor of each of these terms
to deduce, assuming $\delta''$ is small enough, that $\Upsilon_{d,\sigma ,\varphi ,\omega ,\psi ,W}$
contains enough elements of the form $V^* UV$ as $d\to\infty$ so that $\lim_{d\to\infty} |\Upsilon_{d,\sigma ,\varphi ,\omega ,\psi ,W} |/|\sX_d | = 1$.
Note that although Lemma~\ref{L-conc} addresses only the case of even $k$, we can handle the odd case with the following
reduction argument.
For a product of the form 
\[
VA_1 V^* (U(V A_2 V^* )U^* )\cdots VA_{2r-1} V^* (U(V A_{2r} V^* )U^* )VA_{2r+1} V^*
\]
we write its trace as $\tr (VA_{2r+1} A_1 V^*(U(VA_2 V^* )U^* )\cdots VA_{2r-1} V^* (U(VA_{2r} V^* )U^* ))$. If
$A_{2r+1} = \sigma_{s_1}$ and $A_1 = \sigma_{s_2}$ for some $s_1$ and $s_2$ contained 
in $F_i \setminus \{ e \}$ for some $i\in \{ 1,2 \}$ with $s_1 s_2 \neq e$, then up to a perturbation
we have reduced to the even case since $s_1 s_2$ lies in the set $F_i^{2n}$ appearing in the definition of $\sY_i$.
Otherwise up to a perturbation we are back in the odd case with fewer factors, and we can repeat the procedure as necessary.

Take a $U\in\Upsilon_{d,\sigma ,\varphi ,\omega ,\psi ,W}$ and set $\omega'' = U\cdot\omega'$.
Let us show that $(\Omega ,\varphi )\in\HA (F_1 \cup F_2 ,\cP ,n,\delta ,\ell d)$ where $\Omega = \Omega_{\omega''}$.
Let $t_1 ,\dots ,t_n \in F_1 \cup F_2$. Let $j_1 = 1 < j_2 < \dots j_k \leq n$ 
be such that for each $i=1,\dots ,k$ the elements
$t_{j_i} ,\dots , t_{j_{i+1} - 1}$ either all lie in $F_1$ or all lie in $F_2$ and this common membership alternates between
$F_1$ and $F_2$ from one $i$ to the next. Writing $\rho^{(i)} = \sigma$ 
if $t_{j_i} \in F_1$ and $\rho^{(i)} = \omega''$ otherwise, we have
\begin{align*}
\| \Omega_{t_1 \cdots t_n} - \Omega_{t_1} \cdots\Omega_{t_n} \|_2 
\leq \sum_{i=1}^k \| \rho^{(i)}_{t_{j_i} \cdots t_{j_{i+1} - 1}} 
- \rho^{(i)}_{t_{j_i}} \cdots \rho^{(i)}_{t_{j_{i+1} - 1}} \|_2 
< k\delta'' \leq \delta .
\end{align*}
Also, if we are given a $t\in (F_1 \cup F_2 )^n \setminus \{ e \}$
then we can write
$t = t_1 \cdots t_k$ where $1\leq k\leq n$ and $t_1 ,\dots , t_k$ alternate membership
in $F_1^n$ and $F_2^n$, so that
$\tr_{\ell d} (\Omega_t ) = \tr_{\ell d} (\Omega_{t_1 \cdots t_k} ) < \delta' \leq\delta$.
Thus $\Omega$ is an element of $\GA (F_1  \cup F_2 ,n,\delta ,\ell d)$.

Now let us check that, to within the required tolerance, $\varphi$ is approximately equivariant 
on a reduced word $t_1 \cdots t_k$ where $1\leq k\leq n$ and the $t_j$ alternate membership in 
$F_1^n$ and $F_2^n$. Let $p\in\cP$. Given a $j\in \{1,\dots ,n\}$, the projection 
$\alpha_{t_{j+1} \cdots t_k} (p)$ lies in $\cP_K$ and hence when $t_j \in F_1^n$ we have
\begin{align*}
\| \Ad \Omega_{t_j} \circ\varphi (\alpha_{t_{j+1} \cdots t_k} (p))
- \varphi \circ\alpha_{t_j} (\alpha_{t_{j+1} \cdots t_k} (p)) \|_2
< \delta'' \leq \delta' 
\end{align*}
while in the case $t_j \in F_2^n$ we use from ($\ast$) the fact that $U$ approximately commutes 
with $\varphi (\alpha_{t_{j+1} \cdots t_k} (p))$ to within $2\delta'$ in trace norm to obtain
\begin{align*}
\lefteqn{\| \Ad \Omega_{t_j} \circ\varphi (\alpha_{t_{j+1} \cdots t_k} (p))
- \varphi \circ\alpha_{t_j} (\alpha_{t_{j+1} \cdots t_k} (p)) \|_2} \hspace*{30mm} \\
\hspace*{20mm} &= \| \Ad \omega_{t_j}' (U^* \varphi (\alpha_{t_{j+1} \cdots t_k} (p))U)
- U^* \varphi (\alpha_{t_j \cdots t_k} (p))U \|_2 \\
&\leq  \| \Ad \omega_{t_j}' (U^* \varphi (\alpha_{t_{j+1} \cdots t_k} (p))U - \varphi (\alpha_{t_{j+1} \cdots t_k} (p))) \|_2 \\
&\hspace*{10mm} \ +\| \Ad \omega_{t_j}' (\varphi (\alpha_{t_{j+1} \cdots t_k} (p))
- \psi' (\alpha_{t_{j+1} \cdots t_k} (p))) \|_2 \\
&\hspace*{10mm} \ + \| \Ad \omega_{t_j}' \circ\psi' (\alpha_{t_{j+1} \cdots t_k} (p))
- \psi' \circ\alpha_{t_j} (\alpha_{t_{j+1} \cdots t_k} (p)) \|_2 \\
&\hspace*{10mm} \ + \| \psi' (\alpha_{t_j \cdots t_k} (p))
- \varphi (\alpha_{t_j \cdots t_k} (p))  \|_2 \\
&\hspace*{10mm} \ + \| \varphi (\alpha_{t_j \cdots t_k} (p)) 
-U^* \varphi (\alpha_{t_j \cdots t_k} (p))U \|_2 \\
&< 2\delta' + \delta' + \delta'' + \delta' + 2\delta' \leq 7\delta' .
\end{align*}
It follows that
\begin{align*}
\lefteqn{\| \Ad \Omega_{t_1 \cdots t_k} \circ\varphi (p) - \varphi\circ\alpha_{t_1 \cdots t_k} (p) \|_2}\hspace*{10mm} \\ 
\hspace*{10mm} &= \| \Ad\Omega_{t_1} \circ\cdots\circ \Ad\Omega_{t_k} \circ\varphi (p) 
- \varphi\circ\alpha_{t_1} \circ\cdots\circ\alpha_{t_k} (p) \|_2 \\
&\leq \sum_{j=1}^k \| \Ad\Omega_{t_1} \circ\cdots\circ\Ad\Omega_{t_{j-1}} 
(\Ad\Omega_{t_j} \circ\varphi (\alpha_{t_{j+1} \cdots t_k} (p))
- \varphi\circ\alpha_{t_j} (\alpha_{t_{j+1} \cdots t_k} (p))) \|_2 \\
&< 7n\delta' < \delta  .
\end{align*}
Since $|\tr_{\ell d} \circ\varphi (p) - \mu (p)| = \delta'' < \delta$ for all $p\in\cP_{(F_1 \cup F_2 )^n}$
by virtue of the fact that $(\sigma ,\varphi )\in\sY_1$, 
we thus conclude that $(\Omega ,\varphi )\in\HA (F_1 \cup F_2 ,\cP ,n,\delta ,\ell d)$, as desired.

Note that $\Omega$ was obtained by combining in a free manner the maps $\sigma$ and $\omega''$, 
where the latter was obtained from $\omega$ by conjugating by $W$ and then by $U$. Let $\gamma > 0$. 
The set $\Lambda_{d,\sigma ,\varphi, \omega ,\psi}$ of all products $UW$ such that $U$ and $W$
together do the required job has cardinality at least $|\sX_d |/2$
for all $d$ larger than some $d_0$ not depending on $(\sigma,\varphi)$ or $(\omega,\psi)$.
Hence, by Stirling's approximation, for all sufficiently large $d$ we have
\begin{align*}
|\Lambda_{d,\sigma ,\varphi ,\omega ,\psi} | \
\geq \frac12 |\sX_d | = \frac12 d!^{\ell} \geq d^{\ell d(1-\gamma )} .
\end{align*}
Writing $\cS$ for the set of all $U\in S_{\ell d}$
such that $(U\cdot\omega )|_{E_2} = \omega |_{E_2}$ and $(U\cdot\psi )|_{\cQ_2} = \psi |_{\cQ_2}$, we then have
\begin{align*}
|S_{\ell d} \cdot (\omega ,\psi )|_{E_2 ,\cQ_2} = \frac{|S_{\ell d} |}{|\cS |} 
&\leq \frac{(\ell d)^{\ell d}}{|\cS |} \cdot 
\frac{|\Lambda_{d,\sigma ,\varphi ,\omega, \psi}|}{d^{\ell d(1-\gamma )}} \\
&\leq \ell^{\ell d} d^{\ell d\gamma} \frac{|\Lambda_{d,\sigma , \varphi ,\omega ,\psi} |}{|\cS |} \\
&\leq \ell^{\ell d} d^{\ell d\gamma}  |\Lambda_{d,\sigma , \varphi ,\omega ,\psi} \cdot (\omega ,\psi )|_{E_2 ,\cQ_2} .
\end{align*}
Take a set $\sR$ of representatives for the action of $S_{\ell d}$ on $\sY_2$
modulo the relation under which two pairs
are equivalent if the first coordinates agree on $E_2$ and the second coordinates agree on $\cQ_2$,
and take an $\sR' \subseteq\sR$ which is a set of representatives for the action of $S_{\ell d}$ on $\sY_2$
modulo the relation under which two pairs are equivalent if their first coordinates agree on $E_2$.
Note that $|\sY_2 |_{E_2 ,\cQ_2} = \sum_{(\omega ,\psi )\in\sR} |S_{\ell d} \cdot (\omega ,\psi )|_{E_2 ,\cQ_2}$. 
Setting $m = |(\cP_K )_{(F_2^{2n} )^n} |$, for
every $\omega\in\GA (F_2^{2n},n,\delta'' ,\ell d)$ there are at most $m^{\ell d}$ many homomorphisms $\psi$ such that 
$(\omega ,\psi )\in\sY_2$, so that for every $(\omega ,\psi )\in\sY_2$ we have
\[
|\Lambda_{d,\sigma , \varphi ,\omega ,\psi} \cdot\omega |_{E_2} \geq 
m^{-\ell d} |\Lambda_{d,\sigma , \varphi ,\omega ,\psi} \cdot (\omega ,\psi )|_{E_2 ,\cQ_2} 
\]
and every pair in $\sR'$ has the same first coordinate modulo agreement on $E_2$ as at most $m^{\ell d}$ many pairs in $\sR$.
Assuming that each pair $(\omega,\psi)$ in $\sR'$ was chosen so as to maximize the quantity
$|\Lambda_{d,\sigma , \varphi ,\omega ,\psi} \cdot (\omega ,\psi )|_{E_2 ,\cQ_2}$ over all pairs
in $\sR$ which share the same first coordinate modulo agreement on $E_2$, we then have
\begin{align*}
 \sum_{(\omega ,\psi )\in\sR'} 
|\Lambda_{d,\sigma , \varphi ,\omega ,\psi} \cdot\omega |_{E_2} 
&\geq \sum_{(\omega ,\psi )\in\sR'}
m^{-\ell d} |\Lambda_{d,\sigma , \varphi ,\omega ,\psi} \cdot (\omega ,\psi )|_{E_2 ,\cQ_2} \\
&\geq m^{-2\ell d} \sum_{(\omega ,\psi )\in\sR}
|\Lambda_{d,\sigma , \varphi ,\omega ,\psi} \cdot (\omega ,\psi )|_{E_2 ,\cQ_2} .
\end{align*}
Taking a set $\sY_1'$ of representatives for the relation on $\sY_1$ under which two pairs
are equivalent if the first coordinates agree on $E_1$ and the second agree on $\cQ_1$, we thus obtain
\begin{align*}
\lefteqn{|\HA (F_1 \cup F_2 ,\cP ,n,\delta ,\ell d)|_{E_1 \cup E_2 ,\cQ_1 \cup \cQ_2}}\hspace*{15mm} \\
\hspace*{10mm} &\geq \sum_{(\sigma ,\varphi )\in\sY_1'} \sum_{(\omega ,\psi )\in\sR'} 
|\Lambda_{d,\sigma , \varphi ,\omega ,\psi} \cdot\omega |_{E_2} \\
&\geq m^{-2\ell d} \sum_{(\sigma ,\varphi )\in\sY_1'} \sum_{(\omega ,\psi )\in\sR}
|\Lambda_{d,\sigma , \varphi ,\omega ,\psi} \cdot (\omega ,\psi )|_{E_2 ,\cQ_2} \\
&\geq (m^2\ell )^{-\ell d} d^{-\ell d\gamma} |\sY_1 |_{E_1 ,\cQ_1} 
\sum_{(\omega ,\psi )\in\sR} |S_{\ell d} \cdot (\omega ,\psi )|_{E_2 ,\cQ_2} \\
&= (m^2\ell )^{-\ell d} d^{-\ell d\gamma} |\sY_1 |_{E_1 ,\cQ_1} |\sY_2 |_{E_2 ,\cQ_2} \\
&= (m^2\ell )^{-\ell d} d^{-\ell d\gamma} 
|\HA (F_1^{2n} ,\cP_K ,n,\delta'' ,\ell d)|_{E_1 ,\cQ_1} \\
&\hspace*{50mm} \ \times |\HA (F_2^{2n} , \cP_K ,n ,\delta'' ,\ell d)|_{E_2 ,\cQ_2}
\end{align*}
and hence, in view of Lemma~\ref{L-multiple},
\begin{align*}
\lefteqn{\liminf_{d\to\infty} \frac{1}{\ell d\log (\ell d)} 
\log |\HA (F_1 \cup F_2 ,\cP ,n,\delta ,\ell d)|_{E_1 \cup E_2 ,\cQ_1 \cup \cQ_2}}\hspace*{20mm} \\
\hspace*{20mm} &\geq \liminf_{d\to\infty} \frac{1}{\ell d\log (\ell d)} 
\log |\HA (F_1^{2n} ,\cP_K ,n,\delta'' ,\ell d)|_{E_1 , \cQ_1} \\
&\hspace*{10mm} \ + \liminf_{d\to\infty} 
\frac{1}{\ell d\log (\ell d)} \log |\HA (F_2^{2n} ,\cP_K ,n,\delta'' ,\ell d)|_{E_2 ,\cQ_2} - \gamma \\
&\geq \slower_{E_1 ,\cQ_1} (F_1^{2n} ,\cP_K ) + \slower_{E_2 ,\cQ_2} (F_2^{2n} ,\cP_K ) - \gamma .
\end{align*}
Since $n$ was an arbitary positive integer and $\delta$ and $\gamma$ arbitrary positive numbers, 
it follows that 
\[
\slower_{E_1 \cup E_2 ,\cQ_1 \cup \cQ_2} (F_1 \cup F_2 ,\cP ) \geq \slower_{E_1 ,\cQ_1} (G_1 ,X) 
+ \slower_{E_2 ,\cQ_2} (G_2 ,X)
\]
and hence, using Theorem~\ref{T-generating} and Proposition~\ref{P-actions local},
\begin{align*}
\slower (G_1 * G_2 ,X) 
&= \slower (G_1 \cup G_2 ,\cR ) \\ 
&\geq \slower_{E_1 \cup E_2 ,\cQ_1 \cup \cQ_2} (G_1 \cup G_2 , \cR  ) \\
&\geq \slower_{E_1 \cup E_2 ,\cQ_1 \cup \cQ_2} (F_1 \cup F_2 ,\cP ) - \eta \\
&\geq \slower_{E_1 ,\cQ_1} (G_1 ,X) + \slower_{E_2 ,\cQ_2} (G_2 ,X) - \eta \\
&\geq \slower (G_1 ,X) + \slower (G_2 ,X) - 3\eta .
\end{align*}
Since $\eta$ was an arbitrary positive number this yields the result.
\end{proof}

Combining Lemmas~\ref{L-amalgamated upper actions} and \ref{L-free} yields the following.

\begin{theorem}\label{T-free prod action}
Let $G_1$ and $G_2$ be countable discrete groups and
$\alpha$ be a measure-preserving action of $G_1 * G_2$ on $(X,\mu )$ whose restrictions
to $G_1$ and $G_2$ are approximation regular. 
Then $\alpha$ is approximation regular and
\[
s(G_1 * G_2 ,X) = s(G_1 ,X) + s(G_2 ,X) .
\]
\end{theorem}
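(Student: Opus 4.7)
The plan is straightforward: combine the two inequalities in Lemmas~\ref{L-amalgamated upper actions} and \ref{L-free} with the hypothesis that the restrictions to $G_1$ and $G_2$ are approximation regular, and read off both the approximation regularity of $\alpha$ and the free product formula from a single chain of inequalities.

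First, I would invoke Lemma~\ref{L-amalgamated upper actions} to obtain the upper bound
\[
s(G_1 * G_2, X) \leq s(G_1, X) + s(G_2, X).
\]
Next, I would invoke Lemma~\ref{L-free} to obtain the lower bound
\[
\slower(G_1 * G_2, X) \geq \slower(G_1, X) + \slower(G_2, X).
\]
Since in general $\slower(G_1*G_2,X) \leq s(G_1*G_2,X)$ (by definition, as a liminf is at most the corresponding limsup of the same quantities), these two bounds together give
\[
\slower(G_1,X) + \slower(G_2,X) \leq \slower(G_1*G_2,X) \leq s(G_1*G_2,X) \leq s(G_1,X) + s(G_2,X).
\]

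The approximation regularity hypothesis means $s(G_i,X) = \slower(G_i,X)$ for $i=1,2$, so the leftmost and rightmost quantities coincide, forcing equality throughout. In particular $\slower(G_1*G_2,X) = s(G_1*G_2,X)$, which is exactly the statement that $\alpha$ is approximation regular, and this common value equals $s(G_1,X) + s(G_2,X)$, yielding the desired formula. There is no main obstacle here, since all the real work has already been done in the two referenced lemmas; the theorem is purely a bookkeeping consequence.
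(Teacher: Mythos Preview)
Your proposal is correct and takes essentially the same approach as the paper, which simply states that combining Lemmas~\ref{L-amalgamated upper actions} and \ref{L-free} yields the theorem. You have merely spelled out the chain of inequalities and the use of approximation regularity that the paper leaves implicit.
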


\begin{corollary}\label{C-free action}
Let $r\in\Nb$ and let $F_r \curvearrowright (X,\mu )$ be a measure-preserving action.
Then $s(F_r ,X) = \slower (F_r ,X) = r$. 
\end{corollary}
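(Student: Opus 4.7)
My plan is to proceed by induction on $r$, using Theorem~\ref{T-free prod action} at the inductive step and Theorem~\ref{T-amen action} to establish both the base case and the approximation regularity needed to invoke Theorem~\ref{T-free prod action}. This mirrors how Corollary~\ref{C-free} was deduced from Theorem~\ref{T-amalgamated} (with $s(\Zb) = \slower(\Zb) = 1$ as the atomic input).

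For the base case $r=1$, we have $F_1 = \Zb$, which is amenable, so Theorem~\ref{T-amen action} gives $s(\Zb, X) = \slower(\Zb, X) = 1 - |\Zb|^{-1} = 1$ for every measure-preserving action $\Zb \curvearrowright (X,\mu)$. In particular, every such action is approximation regular.

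For the inductive step, suppose the result holds for $r-1$, for every measure-preserving action of $F_{r-1}$. Given a measure-preserving action $F_r \curvearrowright (X,\mu)$, write $F_r = F_{r-1} * \Zb$. The restriction of this action to $F_{r-1}$ is an $F_{r-1}$-action on $(X,\mu)$, and by the inductive hypothesis $s(F_{r-1},X) = \slower(F_{r-1},X) = r-1$, so this restriction is approximation regular. The restriction to $\Zb$ is approximation regular by the base case, with $s(\Zb, X) = \slower(\Zb, X) = 1$. Both hypotheses of Theorem~\ref{T-free prod action} are therefore satisfied, and it yields
\[
s(F_r, X) = s(F_{r-1}, X) + s(\Zb, X) = (r-1) + 1 = r,
\]
and simultaneously (since the theorem asserts approximation regularity of the free product action)
\[
\slower(F_r, X) = r.
\]
This completes the induction. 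There is no real obstacle beyond verifying that the restriction of an $F_r$-action to a free factor is itself a measure-preserving action of that factor (which is immediate from the definitions), so the corollary follows formally from the two quoted theorems.
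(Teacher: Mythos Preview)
Your proof is correct and follows essentially the same approach as the paper: repeatedly applying Theorem~\ref{T-free prod action} with Theorem~\ref{T-amen action} supplying the base input $s(\Zb,X)=\slower(\Zb,X)=1$ and the approximation regularity of the $\Zb$-restrictions. Your explicit induction is just a slightly more detailed rendering of the paper's ``repeatedly apply.''
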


\begin{proof}
Repeatedly apply Theorem~\ref{T-free prod action} using the fact that the action 
$\Zb\curvearrowright (X,\mu )$ obtained by restricting to any one of the standard generators of $F_r$
satisfies $s(\Zb ,X) = \slower (\Zb ,X) = 1$ by Theorem~\ref{T-amen action}.
\end{proof}

The above corollary implies that, for distinct $r_1 , r_2 \in\Nb$, given for each $i=1,2$ 
a measure-preserving action $F_{r_i} \curvearrowright (X,\mu )$, the associated groupoids 
are nonisomorphic. From this we recover both the fact that $F_{r_1}$ and $F_{r_2}$ are
not isomorphic when $r_1 \neq r_2$ and Gaboriau's result that for $r_1 \neq r_2$
there are no free ergodic measure-preserving actions 
$F_{r_1} \curvearrowright (X,\mu )$ and $F_{r_2} \curvearrowright (X,\mu )$
which are orbit equivalent \cite{Gab00}.

By combining the techniques of this section with the quasitiling arguments of 
Section~\ref{S-amalg groups} one could likely generalize the formula of Theorem~\ref{T-free prod action} to
allow for amalgamation over a common amenable subgroup on which the action is free.
We have refrained from attempting this given that the technical details appear formidable
and the equivalence relation approach of \cite{DykKerPic11} already gives the desired formula
under the hypothesis that the action of the amalgamated free product is free. Ultimately one would like to have
a general groupoid version of the free product formula in this amalgamated setting that would 
specialize to actions without any freeness assumptions.

\end{document}